\documentclass[sn-mathphys]{sn-jnl}
%% The amssymb package provides various useful mathematical symbols
\jyear{2021}
\usepackage{graphicx}
\usepackage{epstopdf}
\theoremstyle{thmstyleone}%
\newtheorem{theorem}{Theorem}[section]% meant for sectionwise numbers
%% optional argument [theorem] produces theorem numbering sequence instead of independent numbers for Proposition
\newtheorem{proposition}[theorem]{Proposition}%
\newtheorem{lemma}[theorem]{Lemma}
\newtheorem{assumption}[theorem]{Assumption}

\newtheorem{corollary}[theorem]{Corollary}
\newtheorem{remark}[theorem]{Remark}%

\theoremstyle{thmstylethree}%

\raggedbottom

\begin{document}

\title[\tiny First order strong approximation of Ait--Sahalia-type interest rate model with Poisson jumps]{First order strong approximation of Ait--Sahalia-type interest rate model with Poisson jumps}

%%=============================================================%%
%% Prefix	-> \pfx{Dr}
%% GivenName	-> \fnm{Joergen W.}
%% Particle	-> \spfx{van der} -> surname prefix
%% FamilyName	-> \sur{Ploeg}
%% Suffix	-> \sfx{IV}
%% NatureName	-> \tanm{Poet Laureate} -> Title after name
%% Degrees	-> \dgr{MSc, PhD}
%% \author*[1,2]{\pfx{Dr} \fnm{Joergen W.} \spfx{van der} \sur{Ploeg} \sfx{IV} \tanm{Poet Laureate}
%%                 \dgr{MSc, PhD}}\email{iauthor@gmail.com}
%%=============================================================%%

\author[1]{\fnm{Ziyi} \sur{Lei}}\email{csu\_ziyilei@csu.edu.cn}
\author*[1]{\fnm{Siqing} \sur{Gan}}\email{sqgan@csu.edu.cn}
\author[1]{\fnm{Jing} \sur{Liu}}\email{1014913478@qq.com}

\affil[1]{\orgdiv{School of Mathematics and Statistics, HNP-LAMA}, \orgname{Central South University}, \orgaddress{\city{Changsha}, \postcode{410083}, \country{China}}}
 \footnotetext{This work was supported by Natural Science Foundation of China (11971488).}

\abstract{For Ait--Sahalia-type interest rate model with Poisson jumps, we are interested in strong convergence of a novel time-stepping method, called transformed jump-adapted backward Euler method (TJABEM). Under certain hypotheses, the considered model takes values in positive domain. It is shown that the TJABEM can preserve the domain of the underlying problem. Furthermore, the first-order convergence rate of the TJABEM is recovered with respect to a $L^{p}$-error criterion. Numerical experiments are finally given to illustrate the theoretical results.}

\keywords{Ait--Sahalia-type interest rate model, Poisson jumps, transformed jump-adapted backward Euler method, strong convergence rate}

\pacs[Mathematics Subject Classification]{60H35, 60H10, 65C30}

\maketitle

\section{Introduction}\label{sec1}
As mentioned in \cite{Platen2010Numerical}, in financial and actuarial modeling and other areas of application, jump diffusions are often used to describe the dynamics of various state variables, like asset prices, credit ratings and interest rates. The jump component can capture event-driven uncertainties, such as corporate defaults, operational failures or insured events. Therefore, the research directions on stochastic differential equations (SDEs) with jumps have been warmly pursued, and many excellent research results have been obtained in numerical analysis due to no available exact solutions to these models (see \cite{Higham2005Numerical,Higham2007Strong,Wang2010Compensated,Deng2019Truncated,Chen2019Mean,Chen2020Convergence,Ren2020Compensated,Zhao2021On}). Meanwhile, we would like to mention that, as a nonlinear SDE, Ait--Sahalia-type interest rate model plays an important role in mathematical finance. With the aid of a series of studies by scholars, mainly including Ait--Sahalia \cite{Ait1996Testing}, we've learned that Ait--Sahalia-type model captures well of the dynamics of the spot rate in the research of several continuous-time models for interest rates. Moreover, it is widely used to volatility and other financial quantities besides interest rate now. The more detailed description of this model can be found in the literatures \cite{Deng2019Generalized,Szpruch2011Numerical,Hong2010Modeling,Jiang2017Proporty,Jin2016Ergodicity}. It can be seen that the numerical issues arising from Ait--Sahalia-type interest rate model are worth to analyze. \par
In this paper, we consider the Ait--Sahalia-type interest rate model with Poisson jumps of the form
\begin{eqnarray}\label{1.1}
	dX_t&=&(\alpha_{-1}X_{t-}^{-1}-\alpha_0+\alpha_1 X_{t-}-\alpha_2 X_{t-}^\gamma)dt\nonumber\\
	&&+\alpha_3 X_{t-}^\rho dW_{t} +h(X_{t-})dN_{t},\ \  t>0, \\ X_{0}&=&x_{0},\nonumber
\end{eqnarray}
where parameters $\alpha_{-1},\alpha_0,\alpha_1,\alpha_2,\alpha_3,\gamma,\rho$ are positive constants and $\gamma,\rho>1$, and $X_{t-}:=\lim_{s\rightarrow t-}X_s$. In what follows, the jump coefficient $h:\mathbb{R}\!\rightarrow\! \mathbb{R}$ is assumed to be deterministic for simplicity. Here the scalar Wiener process $W_{t}$ and Poisson process $N_{t}$ with intensity $\lambda\!>\!0$ are both defined on a complete probability space $(\Omega, \mathcal{F}, \mathbb{P})$ with a normal filtration $\{\mathcal{F}_t\}_{t\geq 0}$. Note that the two processes mentioned above are independent of each other and the compensated Poisson process $\tilde{N}_{t}:=N_{t}-\lambda t $ is a martingale, which is a key component in our analysis. We assume that the given initial value $x_0 \in\mathbb{R}_+$.

In view of the fact that the exact solution to \eqref{1.1} is not available and development and analysis of numerical method for simulation of the problem are of significant interest in practice, as the objective of this paper, we plan to present an efficient numerical method for \eqref{1.1}. Firstly, we note that the exact solution of \eqref{1.1} takes values in positive domain $(0,\infty)$ under the conditions imposed on jump coefficient $h$ and parameters $\gamma,\rho$, see Proposition \ref{pro2.2}. Therefore, it is necessary to construct a positivity-preserving numerical method for \eqref{1.1}. Secondly, when $h\equiv 0$, the model \eqref{1.1} reduces to the well-known Ait--Sahalia-type interest rate model. For this reduced problem, various time discretization methods have been designed and analyzed. For example, as already shown in \cite{Hutzenthaler2011Strong}, the classical Euler method produces divergent numerical approximation. To this end, Lukasz Szpruch et al. in \cite{Szpruch2011Numerical} proposed the backward Euler method (BEM) to approximate the solution of this reduced problem and found that the numerical method converges strongly to the true solution without revealing any convergence rate. A few years later, Andreas Neuenkirch et al. in \cite{Neuenkirch2014First} introduced the Lamperti transformation. Under appropriate assumptions, the transformed SDE was discretized by BEM and was transformed back so that an approximate solution is inside the domain of the original solution, where the $p$-th moment convergence rate of their scheme was proved to be one. If we want to generalize the above optimal result of this reduced model to the jump-extended model, we finally need to employ the idea of the above numerical method to obtain first-order convergence rate for the jump-extended model with respect to $L^{p}$-error criterion in finite time intervals.

For jump-extended model, as described in \cite{Platen2010Numerical}, the discrete-time approximations considered are divided into regular and jump-adapted methods. Regular methods employ time discretizations that do not include the jump times of the Poisson jumps. Jump-adapted time discretizations, on the other hand, include these jump times. At present, for the jump-extended SDE $\eqref{1.1}$, there are several numerical methods using regular time discretizations, like the Euler method in \cite{Deng2019Generalized}, where they presented the analytical properties including positivity of the exact solution, and proved that the numerical solution converges to the exact solution of the model only in probability. Besides, it is proved in \cite{Zhao2021On} that the BEM is positivity-preserving and strongly convergent with order only one-half in the mean-square sense for full parameters in the case $\gamma+1>2\rho$ and for parameters obeying $\frac{\alpha_2}{\alpha_3^2}>2\gamma-\frac{3}{2}$ in the general critical case $\gamma+1=2\rho$. To obtain the positive numerical solution and recover the $p$-th moment convergence rate of order one for $\eqref{1.1}$, we introduce the so-called transformed jump-adapted backward Euler method, which combining the idea of the Lamperti-backward Euler approximation in \cite{Neuenkirch2014First} with the idea of the jump-adapted methods. More precisely, using the Lamperti transformation, we transform $\eqref{1.1}$ into a jump-extended SDE with additive noise. Then we apply the jump-adapted backward Euler method (JABEM) with a jump-adapted time discretization to the transformed jump-extended SDE. Finally, transforming back yields a numerical approximation for the original model $\eqref{1.1}$, namely TJABEM. 

 It is known that the jump-adapted method is generally used in the jump-extended SDEs whose coefficients meet globally Lipschitz condition (see \cite{Platen2010Numerical,Bruti-Liberati2007Strong,Maghsoodi1996Mean}), but very few works (see \cite{Xu2017Transformed}) in nonlinear SDEs with non-globally Lipschitz condition. Here we note that the non-globally Lipschitz continuous drift and diffusion coefficients of the model \eqref{1.1} have brought many difficulties to the analysis of the $p$-th moment convergence rate of the TJABEM. In addition, we need to overcome the following difficulties:
\begin{itemize}
\item the adapted time discretization including all jump times is path-dependent.
\item the error propagation in the inverse transformation needs to be controlled by the boundedness of inverse moments of the JABEM.
\end{itemize}

In this paper, we show that the numerical solution of the TJABEM is inside the domain of the exact solution of $\eqref{1.1}$ under appropriate hypotheses, see Lemma $\ref{lem3.1}$ and $\eqref{4.1}$. Furthermore, for \eqref{1.1}, this numerical method enables us to achieve the expected $p$-th moment convergence rate for the first time, see Theorem \ref{thm4.1}. The remainder of this paper is structured as follows. In the next section, we present properties of the considered problem $\eqref{1.1}$, including the existence and uniqueness of a positive global solution and the boundedness of moments. In Section 3, we propose the transformed jump-adapted backward Euler method and then prove that this method is $p$-th moment convergent with order one. In Section 4, we carry out numerical experiments to support our theoretical results. At last, some proofs are given in Appendix. 

\section{Ait--Sahalia-type model with Poisson jumps}
\subsection{Positive global solution}
We now introduce some notations used in this paper. Let $a\wedge b:=\min\{a,b\}$ and $a\vee b:=\max\{a,b\}$. Let $\mathcal{F}^{W}_{t}:=\sigma(W_{s},0\leq s\leq t)$ denote the natural filtration generated by the Wiener process $W_{t}$ and $\mathcal{F}^{N}_{t}:=\sigma(N_s,0\leq s\leq t)$ denote the natural filtration generated by the Poisson process $N_t$. Define $\mathcal{F}_{t}=\sigma(\mathcal{F}_{s}^{W}\cup\mathcal{F}_{s}^{N},0\leq s\leq t)$, augmented by all $\mathbb{P}$-null sets of $\mathcal{F}$. From now on, we will work on the filtered probability space $(\Omega, \mathcal{F},(\mathcal{F}_t)_{t\geq0},\mathbb{P})$.

The well-definedness of this model $\eqref{1.1}$ in the case of $h=0$ has been given by \cite{Szpruch2011Numerical}. In the case of $h(x)\!=\!\delta x$, where constant $\delta\!>\!0$, the well-definedness of the corresponding model has been given by \cite{Deng2019Generalized}. When $h$ satisfies the following more general condition, Proposition 1 in \cite{Zhao2021On} proved that a unique global solution exists and remains in $\mathbb{R}_+:=(0,\infty)$.

\begin{assumption}\label{ass2.2}
The jump coefficient $h$ is continuously differentiable and there exist constants $\mu, r>0$ such that
\begin{eqnarray}\label{2.1}
	\vert h'(x)\vert \leq \mu \ \ and \ \  x+h(x)\geq rx,\ \ \forall x>0.
\end{eqnarray}
\end{assumption}

\begin{proposition}[\cite{Zhao2021On}]\label{pro2.2}
Let Assumption $\ref{ass2.2}$ hold. Then for any given initial value $X_{0}=x_{0}>0$ and constants $\alpha_{-1}, \alpha_{0},\alpha_{1},\alpha_{2},\alpha_{3}>0$, $\gamma,\rho>1$, the problem $\eqref{1.1}$ admits a unique positive global solution, which almost surely satisfies
\begin{eqnarray}\label{3}
	X_t&=&x_{0}+\int_{0}^{t}(\alpha_{-1}X_{s-}^{-1}-\alpha_0+\alpha_1 X_{s-}-\alpha_2 X_{s-}^{\gamma})\,ds\nonumber\\
	&&+\int_{0}^{t}\alpha_3 X_{s-}^{\rho}\,dW_s+\int_{0}^{t}h(X_{s-})\,dN_s,\ \  t\geq 0.
\end{eqnarray}
\end{proposition}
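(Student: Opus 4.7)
The plan is to prove existence, uniqueness, and positivity by the classical localization-plus-Lyapunov-function technique, adapted to the jump-diffusion setting. First I would introduce the exit times $\tau_n := \inf\{t \geq 0 : X_t \notin (1/n, n)\}$ for $n \in \mathbb{N}$. On each such interval the drift $b(x) = \alpha_{-1} x^{-1} - \alpha_0 + \alpha_1 x - \alpha_2 x^\gamma$ and the diffusion $\sigma(x) = \alpha_3 x^\rho$ are smooth, hence locally Lipschitz and bounded, while $|h'| \leq \mu$ from Assumption \ref{ass2.2} forces $h$ to be globally Lipschitz. Truncating the coefficients outside $[1/n,n]$ and applying the standard Picard iteration for jump SDEs with Lipschitz coefficients produces, for each $n$, a unique strong c\`adl\`ag solution agreeing on $[0,\tau_n]$; consistency gives a unique local solution on $[0,\tau_\infty)$ where $\tau_\infty := \lim_n \tau_n$.

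The heart of the argument is to prove $\tau_\infty = \infty$ almost surely, which simultaneously rules out explosion to $\infty$ and extinction at $0$. For this I would introduce the two-sided Lyapunov function
$$V(x) := x^p + x^{-q}, \qquad x > 0,$$
with $p,q>0$ to be calibrated, and apply It\^o's formula for semimartingales with jumps to $V(X_{t \wedge \tau_n})$. The resulting decomposition has a continuous-generator drift $\mathcal{L}_c V = b V' + \frac{1}{2} \sigma^2 V''$, a local martingale $M^{(n)}$ (Brownian plus compensated-Poisson integrals), and a deterministic jump-drift $\lambda[V(x + h(x)) - V(x)]$ obtained by compensating $dN_s$. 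Three estimates then drive the argument: (i) the mean-reverting term yields $-p\alpha_2 x^{p+\gamma-1}$ in $\mathcal{L}_c V$, which must dominate the diffusive contribution $\tfrac{1}{2} p(p-1) \alpha_3^2 x^{p+2\rho-2}$ at infinity; (ii) the singular drift yields $-q\alpha_{-1} x^{-q-2}$, which must dominate $\tfrac{1}{2} q(q+1) \alpha_3^2 x^{2\rho - q - 2}$ near zero; and (iii) Assumption \ref{ass2.2} gives the two-sided bound $rx \leq x + h(x) \leq (1+\mu)x + |h(0)|$, so $V(x+h(x)) \leq C(V(x) + 1)$. Combining these yields $\mathcal{L}_c V(x) + \lambda[V(x+h(x))-V(x)] \leq C_1 V(x) + C_2$. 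Taking expectations (so that the stopped martingale vanishes), applying Gronwall's inequality, and using that $V(X_{\tau_n}) \geq \min(n^p, n^q)$ on $\{\tau_n \leq t\}$, a Chebyshev-type estimate forces $\mathbb{P}(\tau_n \leq t) \to 0$, giving $\tau_\infty = \infty$ a.s.

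The main technical obstacle is calibrating the exponents $p, q$ against the super-linear diffusion. The large-$x$ dominance requires $p+\gamma-1 \geq p+2\rho-2$, i.e.\ $\gamma \geq 2\rho - 1$; in the critical case $\gamma = 2\rho-1$ one further needs $\alpha_2/\alpha_3^2$ to exceed a threshold depending on $p$. The small-$x$ side asks that $q$ be large enough for $x^{2\rho-q-2}$ to remain bounded at $0^+$, while the jump bound (iii) restricts how large $p$ can be before the upper bound $(1+\mu)x + |h(0)|$ blows up $V$ too much. Once the exponents are fixed, the It\^o/Gronwall machinery is standard; the subtlety is entirely in the interplay between the super-linear drift, the super-linear diffusion, and the Lipschitz-plus-lower-bound structure of $h$, which is precisely what Assumption \ref{ass2.2} is designed to control.
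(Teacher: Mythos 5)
Your overall architecture (localization via $\tau_n$, truncated Picard iteration for the local solution, a two-sided Lyapunov function, then Gronwall and Chebyshev) is the standard Khasminskii-type argument and is the right skeleton; note the paper does not reprove this proposition but imports it from \cite{Zhao2021On}. However, your calibration of the Lyapunov function creates a genuine gap: Proposition \ref{pro2.2} asserts global existence for \emph{all} $\gamma,\rho>1$, with no relation between them, whereas your step (i) explicitly requires $p+\gamma-1\ge p+2\rho-2$, i.e.\ $\gamma\ge 2\rho-1$, plus a further restriction on $\alpha_2/\alpha_3^2$ in the critical case. Those restrictions belong to the \emph{moment bounds} of Propositions \ref{pro2.3} and \ref{pro2.4} (where $V(x)=x^p$ with $p\ge 2$ is used), not to mere non-explosion. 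As written, your argument proves a strictly weaker statement: it says nothing when $\gamma<2\rho-1$, e.g.\ $\gamma=1.5$, $\rho=3$, a case the proposition covers.

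The missing idea is to take the exponent of the growing part of $V$ strictly between $0$ and $1$. With $V(x)=x^{\theta}+x^{-q}$, $\theta\in(0,1)$, the coefficient $\theta(\theta-1)$ is \emph{negative}, so the superlinear diffusion contributes $\tfrac12\theta(\theta-1)\alpha_3^2x^{\theta+2\rho-2}\le 0$ at infinity and there is nothing to dominate there; the mean reversion $-\theta\alpha_2x^{\theta+\gamma-1}$ then controls the remaining positive contributions coming from the $x^{-q}$ part (choose $q>\max(0,\,2\rho-\gamma-1)$ so that $\theta+\gamma-1>2\rho-q-2$), while near zero $-q\alpha_{-1}x^{-q-2}$ dominates both $\tfrac12 q(q+1)\alpha_3^2x^{2\rho-q-2}$ (since $\rho>1$) and $\theta\alpha_{-1}x^{\theta-2}$. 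Your jump estimate (iii) survives unchanged, since $(x+h(x))^{\theta}\le C(1+x^{\theta})$ for $\theta\le 1$ and $(x+h(x))^{-q}\le r^{-q}x^{-q}$ by Assumption \ref{ass2.2}. With this choice one gets $\mathcal{L}_cV(x)+\lambda[V(x+h(x))-V(x)]\le C(1+V(x))$ for every $\gamma,\rho>1$ and all positive $\alpha_i$, and the rest of your Gronwall--Chebyshev argument goes through verbatim.
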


\subsection{Boundedness of moments}
Throughout this paper, we use $\mathbb{N}$ to denote the set of all positive integers and let $M\in\mathbb{N}$, $T\in(0,\infty)$ be given. Define the conditional expectation $\mathbb{E}^{N}[X]=\mathbb{E}[X\vert \mathcal{F}^{N}_{T}].$  We always assume that $C$ stands for generic positive constants that are independent of the discretization parameters and whose values might change every time as it appears.
\begin{proposition}[\cite{Zhao2021On}]\label{pro2.3}
Let Assumption $\ref{ass2.2}$ hold. If one of the following conditions holds:
\begin{itemize}
	\item $\gamma>2\rho-1$, $p\geq 2$,
	\item $\gamma=2\rho-1$, $2\leq p<\frac{2\alpha_{2}}{\alpha_{3}^{2}}+1$,
	\item $\gamma\geq 2\rho-1$, $p\leq (-1)\wedge(1-\gamma)$,
\end{itemize}
 then solution $X_{t}$ given by \eqref{3} satisfies
\begin{eqnarray}\label{2.3}
	\sup_{t\in[0,T]}\mathbb{E}[\vert X_{t}\vert^{p}]\leq C(1+\vert x_{0}\vert^{p}),
\end{eqnarray}
where $C$ depends on $p$ and $T$. 
\end{proposition}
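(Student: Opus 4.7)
The plan is to apply Itô's formula for jump SDEs to the function $V(x)=x^{p}$ (valid since $X_t>0$ a.s.\ by Proposition~\ref{pro2.2}), then dispatch on the three parameter regimes by a moment inequality $+$ Gronwall argument. First, I would localize via the stopping times $\tau_n:=\inf\{t\ge 0:X_t\notin(1/n,n)\}$, so that the local martingale parts from $dW_t$ and the compensated jump measure $d\tilde N_t=dN_t-\lambda\,dt$ vanish in expectation. This yields a representation of the form
\begin{equation*}
\mathbb{E}\bigl[X_{t\wedge\tau_n}^{p}\bigr]=x_0^{p}+\mathbb{E}\!\int_{0}^{t\wedge\tau_n}\!\mathcal{A}(X_{s-})\,ds,
\end{equation*}
where $\mathcal{A}(x)=p\,x^{p-1}(\alpha_{-1}x^{-1}-\alpha_0+\alpha_1 x-\alpha_2 x^{\gamma})+\tfrac{1}{2}p(p-1)\alpha_3^{2}\,x^{p+2\rho-2}+\lambda\bigl[(x+h(x))^{p}-x^{p}\bigr]$. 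The aim will be to show in each case that $\mathcal{A}(x)\le C(1+x^{p})$ for all $x>0$, after which standard Gronwall gives $\mathbb{E}[X_{t\wedge\tau_n}^{p}]\le C(1+x_0^{p})$, and Fatou with $\tau_n\uparrow\infty$ completes the proof.

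Next I would dispose of the jump contribution uniformly in the three cases using Assumption~\ref{ass2.2}. For $p\ge 2$, $|h'|\le\mu$ together with Taylor's theorem gives $|(x+h(x))^{p}-x^{p}|\le C(x^{p-1}|h(x)|+|h(x)|^{p})\le C(1+x^{p})$ via the linear growth of $h$. For $p\le -1$, the two-sided control $rx\le x+h(x)\le(1+\mu)x+|h(0)|$ yields $(x+h(x))^{p}\le\max(r^{p},(1+\mu)^{p})\,x^{p}+C$, and so the jump term is again bounded by $C(1+x^p)$. Thus only the drift and diffusion contributions need case-by-case analysis.

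Then I handle the three regimes by comparing the exponents of the two dominant monomials $x^{p+\gamma-1}$ (from $-\alpha_2 x^{\gamma}$) and $x^{p+2\rho-2}$ (from the Itô correction). In Case 1 ($\gamma>2\rho-1$, $p\ge 2$), $p+\gamma-1>p+2\rho-2$, so for $x$ large the negative term $-p\alpha_2 x^{p+\gamma-1}$ dominates the positive term $\tfrac{1}{2}p(p-1)\alpha_3^{2}x^{p+2\rho-2}$, and Young's inequality lets me absorb the lower-order polynomials $x^{p-2}$, $x^{p-1}$, $x^{p}$ into a constant plus $x^{p+\gamma-1}$. In Case 2 ($\gamma=2\rho-1$), both powers coincide, and their combined coefficient is $-p\alpha_2+\tfrac{1}{2}p(p-1)\alpha_3^{2}=\tfrac{p}{2}\alpha_3^{2}\bigl(p-1-\tfrac{2\alpha_2}{\alpha_3^{2}}\bigr)$, which is nonpositive precisely when $p<1+\tfrac{2\alpha_2}{\alpha_3^{2}}$, so the leading term is again good and the rest absorbs. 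In Case 3 ($p\le(-1)\wedge(1-\gamma)$, $\gamma\ge 2\rho-1$), $p+\gamma-1\le 0$ and $p+2\rho-2\le 0$, so both $x^{p+\gamma-1}$ and $x^{p+2\rho-2}$ are automatically bounded for $x$ large; the genuinely dangerous term is now $x\to 0$, where the strongly negative drift $p\alpha_{-1}x^{p-2}$ (recall $p<0$) dominates and, via Young's inequality, absorbs the other singular contribution $-p\alpha_{0}x^{p-1}$.

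The main obstacle I anticipate is keeping the Young's inequality bookkeeping consistent across all three cases so that the residual is uniformly $C(1+x^{p})$; in particular, Case 3 is delicate because one must simultaneously control the $x\downarrow 0$ singularity of $x^{p-2}$ against $x^{p-1}$ and the $x\uparrow\infty$ behavior of $x^{p+\gamma-1}$, $x^{p+2\rho-2}$, and the jump increment. Once this inequality is established, the localization/Fatou step is routine.
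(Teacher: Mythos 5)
Your proposal is correct, and it is essentially the same argument the paper relies on: Proposition \ref{pro2.3} is quoted from \cite{Zhao2021On} without proof, but the Lyapunov computation the paper reproduces in the proof of Proposition \ref{pro2.4} --- namely the bound $\mathcal{L}V(x)+\lambda\big(V(x+h(x))-V(x)\big)\leq K_{1}$ for $V(x)=x^{p}$, followed by It\^{o}'s formula --- is exactly your generator estimate. Your case analysis of the dominant monomials, the treatment of the jump increment via Assumption \ref{ass2.2}, and the localization/Gronwall/Fatou conclusion are all sound.
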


By Proposition \ref{pro2.3}, we can further obtain the following proposition.
\begin{proposition}\label{pro2.4}
Let Assumption $\ref{ass2.2}$ hold. If one of the following conditions holds:
\begin{itemize}
	\item $\gamma>2\rho-1$, $p\in(-\infty,\infty)$,
	\item $\gamma=2\rho-1$, $p\in(-\infty,\frac{\alpha_{2}}{\alpha_{3}^{2}}-\rho+\frac{3}{2})$,
\end{itemize}
 then solution $X_t$ given by \eqref{3} satisfies
\begin{eqnarray}
	\mathbb{E}\big[\sup_{t\in[0,T]}\vert X_{t}\vert^{p}\big]<\infty.
\end{eqnarray}
\end{proposition}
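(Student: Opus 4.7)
The strategy is to lift the pointwise moment bound of Proposition \ref{pro2.3} to a bound on the moment of the supremum via an It\^o--BDG argument, supplemented by a Jensen interpolation to cover values of $p$ not reached directly by the It\^o step.

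I apply It\^o's formula to $f(x)=x^p$, which is valid almost surely along the trajectory since $X_t>0$ by Proposition \ref{pro2.2}, obtaining
\begin{align*}
X_t^p &= x_0^p + \int_0^t \Bigl[ p\alpha_{-1} X_{s-}^{p-2} - p\alpha_0 X_{s-}^{p-1} + p\alpha_1 X_{s-}^{p} - p\alpha_2 X_{s-}^{p+\gamma-1} + \tfrac{p(p-1)\alpha_3^2}{2} X_{s-}^{2p+2\rho-2} \Bigr]\,ds \\
&\quad + \int_0^t p\alpha_3 X_{s-}^{p+\rho-1}\,dW_s + \int_0^t \bigl[ (X_{s-}+h(X_{s-}))^p - X_{s-}^p \bigr]\,dN_s.
\end{align*}
Splitting the Poisson integral via $dN_s=d\tilde N_s+\lambda\,ds$ isolates the compensated-Poisson martingale from a bounded-variation remainder. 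Taking absolute value, supremum over $t\in[0,T]$, and expectation, and using $X_t^p\geq 0$ to write $\sup_t X_t^p \leq |x_0|^p + \sum \sup_t|\cdot|$, the bounded-variation pieces reduce to $\int_0^T \mathbb{E}[X_s^q]\,ds$ via $\sup_t|\int_0^t g\,ds|\leq \int_0^T|g|\,ds$ and Fubini, while the Wiener and compensated-Poisson martingales are controlled by Burkholder--Davis--Gundy followed by Cauchy--Schwarz, yielding bounds of the form $C\bigl(\int_0^T \mathbb{E}[X_s^{2p+2\rho-2}]\,ds\bigr)^{1/2}$ and, after invoking the two-sided estimate $rx\leq x+h(x)\leq (1+\mu)x+|h(0)|$ from Assumption \ref{ass2.2} to derive $|(x+h(x))^p-x^p|\leq C(x^p+1)$, a bound governed by $\mathbb{E}[X_s^{2p}]$.

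The entire argument thus reduces to showing that each exponent in the finite list $q\in\{p-2,\,p-1,\,p,\,p+\gamma-1,\,2p,\,2p+2\rho-2\}$ lies in the admissible range of Proposition \ref{pro2.3}. When $\gamma>2\rho-1$, the first and third bullets of Proposition \ref{pro2.3} furnish finite moments for all sufficiently large $|q|$, and the log-convexity of $q\mapsto \mathbb{E}[X_s^q]$ fills the remaining intermediate $q$. In the critical case $\gamma=2\rho-1$, the sharpness of the hypothesis is exposed by the algebraic identity $\{p<\alpha_2/\alpha_3^2-\rho+3/2\}\Longleftrightarrow\{2p+2\rho-2<2\alpha_2/\alpha_3^2+1\}$, the right-hand side being exactly the ceiling in the second bullet of Proposition \ref{pro2.3} applied to the dominant Wiener quadratic-variation exponent $2p+2\rho-2$. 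Values of $p$ not reached directly by the It\^o step (notably $p\in(0,2)$ and mildly negative $p$) are absorbed by Jensen: $\mathbb{E}[\sup_t X_t^{p_1}]\leq (\mathbb{E}[\sup_t X_t^{p_2}])^{p_1/p_2}$ for $0<p_1<p_2$, with the dual inequality handling the negative side, so everything reduces to extremal $p$ already treated.

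The principal technical obstacle is the verification that in the critical case the Wiener quadratic-variation exponent $2p+2\rho-2$ is genuinely the most demanding one, so that no other term in the expansion -- and in particular the Poisson jump contribution -- imposes a stricter constraint than $p<\alpha_2/\alpha_3^2-\rho+3/2$. This is precisely the place where Assumption \ref{ass2.2} is indispensable, since it confines the jump size to at-most-linear growth in $x$ and thereby keeps the jump exponent $2p$ inside the range delivered by Proposition \ref{pro2.3}.
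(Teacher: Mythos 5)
Your proposal follows the same skeleton as the paper's proof (It\^o's formula for $x^p$, splitting $dN_s=d\tilde N_s+\lambda\,ds$, BDG plus Cauchy--Schwarz for the two martingales, Proposition \ref{pro2.3} to close, and a domination/Jensen step for the values of $p$ not reached directly), and it correctly identifies the binding constraint $2(p-1+\rho)<2\alpha_2/\alpha_3^2+1\iff p<\alpha_2/\alpha_3^2-\rho+3/2$ coming from the Wiener martingale. Where you genuinely diverge is in the treatment of the $ds$-integral: you take absolute values term by term and feed each exponent into Proposition \ref{pro2.3} (with log-convex interpolation), whereas the paper never estimates $\mathbb{E}[X_s^{p+\gamma-1}]$ or $\mathbb{E}[X_s^{p+2\rho-2}]$ at all --- it keeps the drift and quadratic-variation terms together with the jump compensator and uses the one-sided Lyapunov bound $\mathcal{L}V(x)+\lambda\big(V(x+h(x))-V(x)\big)\leq K_1$, exploiting the sign of $-p\alpha_2x^{p+\gamma-1}$, so the whole $ds$-integral is bounded by $K_1T$ pathwise. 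Your route buys a more uniform argument but costs you exactly in the critical case: discarding the sign forces $p+\gamma-1=p+2\rho-2<2\alpha_2/\alpha_3^2+1$, i.e.\ $p<2(\alpha_2/\alpha_3^2-\rho+3/2)$, which is implied by the hypothesis only when $\alpha_2/\alpha_3^2-\rho+3/2>0$; for negative $p$ near a nonpositive threshold the term-by-term bound is not covered by Proposition \ref{pro2.3}, and your argument survives only because the final reduction to extremal $p$ sends the negative side to very large negative exponents, where the third bullet of Proposition \ref{pro2.3} applies to every exponent in your list. This is essentially what the paper does too, except it outsources the negative-$p$ case to Lemma 2 of \cite{Zhao2021On} rather than rerunning the It\^o computation. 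You should make the reduction-to-extremal-$p$ step do this work explicitly, since as written the ``verification that $2p+2\rho-2$ is the most demanding exponent'' is asserted rather than checked for $p<0$.

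Two smaller points. First, there is a computational slip in your It\^o expansion: the quadratic-variation drift term is $\tfrac{p(p-1)\alpha_3^2}{2}X_{s-}^{p+2\rho-2}$, not $X_{s-}^{2p+2\rho-2}$; the exponent $2p+2\rho-2$ arises only after applying BDG to the Wiener martingale. The slip happens to be harmless here (the correct exponent is less demanding for $p>0$), but it is precisely the distinction between $p+2\rho-2$ and $2p+2\rho-2$ that makes the sharp constant $\alpha_2/\alpha_3^2-\rho+3/2$ come from the martingale term rather than the generator. Second, both your argument and the paper's tacitly treat the stochastic integrals as true martingales when applying BDG; a localization is needed for full rigor, but this is a shared omission rather than a defect of your approach.
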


\begin{proof}
 Define two functions $f(x)=\alpha_{-1}x^{-1}-\alpha_{0}+\alpha_{1}x-\alpha_{2}x^{\gamma}$ and $g(x)=\alpha_{3} x^{\rho}$. For any $C^{2}$-function $V:\mathbb{R}_{+}\rightarrow\mathbb{R}_{+}$, we introduce an operator $\mathcal{L}V$ from $\mathbb{R}_{+}$ to $\mathbb{R}$ by $\mathcal{L}V(x)=\frac{\partial V(x)}{\partial x}f(x)+\frac{1}{2}\frac{\partial^{2} V(x)}{\partial x^{2}}\big(g(x)\big)^{2}$. According to the proof of Lemma 1 in \cite{Zhao2021On}, for the Lyapunov function $V(x)=x^{p}$, under either condition $\gamma> 2\rho-1, p\geq 2$ which implies $p+\gamma-1>p+2\rho-2$ or condition $\gamma=2\rho-1, 2\leq p<\frac{2\alpha_{2}}{\alpha_{3}^{2}}+1$ which implies $p+\gamma-1=p+2\rho-2$, there is a constant $K_{1}>0$ such that for any $x\in\mathbb{R}_{+}$,
\begin{eqnarray}
&&\mathcal{L}V(x)+\lambda\big(V(x+h(x))-V(x)\big)\nonumber\\
&=&\big[p(\alpha_{-1}x^{p-2}-\alpha_{0}x^{p-1}+\alpha_{1} x^{p}-\alpha_{2}x^{p+\gamma-1})+0.5\alpha_{3}^{2}p(p-1)x^{p+2\rho-2}\big]\nonumber\\
&&+\lambda \big[(x+h(x))^{p}-x^{p}\big]\nonumber\\
&\leq& K_{1}.
\end{eqnarray}
Since $\alpha_{2}/\alpha_{3}^{2}-\rho+\frac{3}{2}<2\alpha_{2}/\alpha_{3}^{2}+1$, then under either condition $\gamma> 2\rho-1, p\geq 2$ or condition $\gamma=2\rho-1, 2\leq p<\frac{\alpha_{2}}{\alpha_{3}^{2}}-\rho+\frac{3}{2}$, using It\^{o}'s formula leads to
\begin{eqnarray}\label{2.8}
\mathbb{E}\big[\sup_{t\in[0,T]}\vert X_{t}\vert^{p}\big]&\leq& x_{0}^{p}+K_{1}T+\mathbb{E}\Big[\sup_{t\in[0,T]}\int_{0}^{t}p\alpha_{3} X_{s-}^{p-1+\rho}\,dW_s\Big]\nonumber\\
&&+\mathbb{E}\Big[\sup_{t\in[0,T]}\int_{0}^{t}\Big(\big(X_{s-}+h(X_{s-})\big)^{p}-X_{s-}^{p}\Big)\,d\tilde{N}_s\Big].
\end{eqnarray}
It follows from Assumption $\ref{ass2.2}$ that
\begin{eqnarray}\label{2.4}
\big(x+h(x)\big)^{p}&\leq& 2^{p-1}x^{p}+2^{p-1}\big\vert h(x)-h(0)+h(0)\big\vert^{p}\nonumber\\
&\leq& \big(2^{2(p-1)}\mu^{p}+2^{p-1}\big)x^{p}+2^{2(p-1)}\big\vert h(0)\big\vert^{p}.
\end{eqnarray}
 Then the H\"{o}lder inequality, Burkholder--Davis--Gundy (BDG) inequality (see \cite{Mao2008Stochastic}), Lemma 2.2 in \cite{Deng2019Generalized}, $\eqref{2.4}$ and Proposition \ref{pro2.3} ensure that
\begin{eqnarray}\label{2.9}
&&\mathbb{E}\big[\sup_{t\in[0,T]}\big\vert X_{t}\big\vert^{p}\big]\nonumber\\
&\leq& x_{0}^{p}+K_{1}T+\Big(\mathbb{E}\Big[\sup_{t\in[0,T]}\big\vert\int_{0}^{t}X_{s-}^{p-1+\rho}\,dW_s\big\vert^{2}\Big]\Big)^{1/2}\nonumber\\
&&+\Big(\mathbb{E}\Big[\sup_{t\in[0,T]}\big\vert\int_{0}^{t}\Big(\big(X_{s-}+h(X_{s-})\big)^{p}-X_{s-}^{p}\Big)\,d\tilde{N}_s\big\vert^{2}\Big]\Big)^{1/2}\nonumber\\
&\leq& x_{0}^{p}+K_{1}T+C\Big(\int_{0}^{T}\mathbb{E}\big[\vert X_{s-}\vert^{2(p-1+\rho)}\big]\,ds\Big)^{1/2}\nonumber\\
&&+C\Big(\int_{0}^{T}\mathbb{E}\Big[\big\vert\big(X_{s-}+h(X_{s-})\big)^{p}-X_{s-}^{p}\big\vert^{2}\Big]\,ds\Big)^{1/2}\nonumber\\
&\leq& C+C\Big(\int_{0}^{T}\mathbb{E}\big[\vert X_{s-}\vert^{2(p-1+\rho)}\big]\,ds\Big)^{1/2}+C\Big(\int_{0}^{T}\mathbb{E}\big[\vert X_{s-}\vert^{2p}\big]\,ds\Big)^{1/2}\nonumber\\
&<&\infty.
\end{eqnarray}
Now referring to the proof of Lemma 2 in \cite{Zhao2021On} and estimates $\eqref{2.8}$, $\eqref{2.9}$, we can deduce that for $p\leq (-1)\wedge(1-\gamma)$, $\mathbb{E}\big[\sup_{t\in[0,T]}\vert X_{t}\vert^{p}\big]<\infty$. The following inequalities
\begin{eqnarray}
x^{p}\leq C(1+x^{2}), \ \ \forall x>0, p\in[0,2)\nonumber
\end{eqnarray}
and
\begin{eqnarray}
x^{p}\leq C(1+x^{(-1)\wedge(1-\gamma)-\varepsilon}),\ \ \forall x>0, \varepsilon>0, p\in\big((-1)\wedge(1-\gamma), 0\big)\nonumber
\end{eqnarray}
 respectively imply that the result $\mathbb{E}\big[\sup_{t\in[0,T]}\vert X_{t}\vert^{p}\big]<\infty$ holds for $p\in[0,2)$ and $p\in\big((-1)\wedge(1-\gamma), 0\big)$. Consequently, we obtain the desired assertions. 
\end{proof}

\section{Numerical method and strong convergence rate}
The aim of the present section is to derive the strong convergence rate of a proposed numerical method for $\eqref{1.1}$. Since \cite{Neuenkirch2014First} obtains the first-order $p$-th moment convergence rate for Ait--Sahalia-type model without jump by Lamperti transformation $Z_{t}=X_{t}^{1-\rho}$, we employ this technique to get a transformed jump-extended SDE with additive noise in this section, which will help us to obtain the expected convergence result of the numerical method for the original model $\eqref{1.1}$.

\subsection{Jump-extended SDE with additive noise}
Using the transformed process $Z_{t}=X_{t}^{1-\rho}$ and It\^{o}'s formula leads to the transformed jump-extended SDE
\begin{eqnarray}\label{3.1}
	dZ_{t}\!&=&\!F_{\gamma,\rho}(Z_{t-})dt+(1-\rho)\alpha_{3}\,dW_t\nonumber\\
\!&&\!+\big[\big(Z_{t-}^{\frac{1}{1-\rho}}+h(Z_{t-}^{\frac{1}{1-\rho}})\big)^{1-\rho}-Z_{t-}\big]dN_t, \ \ t\in(0,T], \ \ Z_{0}=x_{0}^{1-\rho},
\end{eqnarray}
where
\begin{eqnarray}\label{11}
	F_{\gamma,\rho}(x):=(\rho-1)(-\alpha_{-1}x^{\frac{\rho+1}{\rho-1}}+\alpha_{0}x^{\frac{\rho}{\rho-1}}-\alpha_{1}x+\alpha_{2}x^{-\frac{\gamma-\rho}{\rho-1}}+\frac{\rho\alpha_{3}^{2}}{2}x^{-1}).
\end{eqnarray}
 For the above transformed jump-extended model, we will propose a numerical method, which is positivity-preserving and $p$-th moment convergent with order one. Futhermore, we will show that the inverse moments of the numerical solution are bounded. Based on these conclusions, we can obtain the proof of the main result later.

\subsection{The jump-adapted backward Euler method}
To reduce the complexity of higher order method, for the transformed problem $\eqref{3.1}$, we consider the jump-adapted approximation. For any given step size $\Delta t=T/M$, we introduce a deterministic partition
$$
\mathcal{T}^0=\{0=t_{0}^0<t_{1}^0<...<t_M^0=T\}
$$
of the interval $[0, T]$, where $t_{i}^0=i \Delta t, i=0, 1, \cdots, M$. Meanwhile, there may be a random partition $\mathcal{T}^1=\{0\leq \nu_1<\nu_2, \cdots\leq T\}$ of interval $[0, T]$ generated by the Poisson jumps, which depends on sample path. For each sample path, we merge partition $\mathcal{T}^0$ and $\mathcal{T}^1$ to form a new partition
$$
\mathcal{T}=\{0=t_{0}<t_{1}<...<t_{n_{T}}=T\},
$$
where $n_{T}$ is the subscript corresponding to the last time node $T$. The following graph from \cite{Platen2010Numerical} shows how a jump-adapted time discretization  $\mathcal{T}$ is formed. 
\begin{figure}[!ht]\centering
\includegraphics[height=3cm,width=7.4cm]{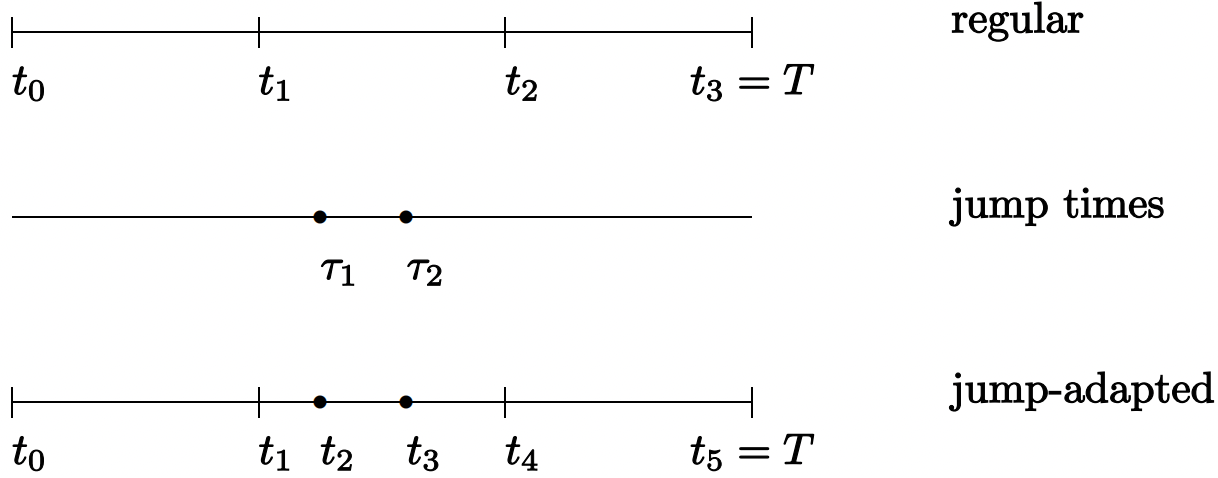}
\end{figure}
This means that $\mathcal{T}$ is path-dependent and the maximum time step size of the resulting jump-adapted discretization is not more than $\Delta t$ in this way.\par
On the mesh $\mathcal{T}$, for any $k\in\{0,1,...,n_{T}-1\}$, the exact solution of $\eqref{3.1}$ can be rewritten as
\begin{equation}\label{3.3}
	\left\{
	\begin{array}{rl}
	Z_{t_{k+1}-}=&Z_{t_{k}}+\int_{t_{k}}^{t_{k+1}}F_{\gamma,\rho}(Z_{t-})\,dt+(1-\rho)\alpha_{3}\Delta W_{k},\\
	Z_{t_{k+1}}=&Z_{t_{k+1}-}+\big[\big(Z_{t_{k+1}-}^{\frac{1}{1-\rho}}+h(Z_{t_{k+1}-}^{\frac{1}{1-\rho}})\big)^{1-\rho}-Z_{t_{k+1}-}\big]\Delta N_{k},
	\end{array}
\right.
\end{equation}
where we have $\Delta N_{k}=1$ if $t_{k+1}$ is a jump time and $\Delta N_{k}=0$ otherwise. The JABEM for $\eqref{3.1}$ is defined by $\bar{Z}_{0}=Z_{0}$ and for $k\in\{0,1,...,n_{T}-1\}$,
\begin{equation}\label{3.4}
	\left\{
	\begin{array}{rl}
	\bar{Z}_{t_{k+1}-}=&\bar{Z}_{t_{k}}+F_{\gamma,\rho}(\bar{Z}_{t_{k+1}-})\Delta t_{k}+(1-\rho)\alpha_{3}\Delta W_{k},\\
	\bar{Z}_{t_{k+1}}=&\bar{Z}_{t_{k+1}-}+\big[\big(\bar{Z}_{t_{k+1}-}^{\frac{1}{1-\rho}}+h(\bar{Z}_{t_{k+1}-}^{\frac{1}{1-\rho}})\big)^{1-\rho}-\bar{Z}_{t_{k+1}-}\big]\Delta N_{k},
	\end{array}
\right.
\end{equation}
where $\Delta t_{k}=t_{k+1}-t_{k}$, $\Delta W_{k}=W_{t_{k+1}}-W_{t_{k}}$ and $\Delta N_{k}=N_{t_{k+1}}-N_{t_{k}}$. A further closer look at $\eqref{3.4}$ suggests that  if $t_{k+1}$ is a jump time, we have
$$\bar{Z}_{t_{k+1}}=\big(\bar{Z}_{t_{k+1}-}^{\frac{1}{1-\rho}}+h(\bar{Z}_{t_{k+1}-}^{\frac{1}{1-\rho}})\big)^{1-\rho},$$
and $\bar{Z}_{t_{k+1}}=\bar{Z}_{t_{k+1}-}$, otherwise. Note that the first derivative of $F_{\gamma,\rho}$ is given by
\begin{eqnarray}\label{a14}
	F'_{\gamma,\rho}(x)&=&-\alpha_{-1}(\rho+1)x^{\frac{2}{\rho-1}}+\alpha_{0}\rho x^{\frac{1}{\rho-1}}-\alpha_{1}(\rho-1)\nonumber\\
&&-\alpha_{2}(\gamma-\rho)x^{-\frac{\gamma-1}{\rho-1}}-0.5(\rho-1)\rho\alpha_{3}^{2}x^{-2}.
\end{eqnarray}
$\gamma\geq 2\rho-1$ implies that $\frac{\gamma-1}{\rho-1}\geq 2$. Hence one can infer that $\lim_{x\rightarrow 0}F'_{\gamma,\rho}(x)=\lim_{x\rightarrow \infty}F'_{\gamma,\rho}(x)=-\infty$. As a result, there exists a constant $Q_{\gamma,\rho}\in[0,\infty)$ such that $\sup_{x\in\mathbb{R}_{+}}F'_{\gamma,\rho}(x)\leq Q_{\gamma,\rho}$. Obviously, function $F'_{\gamma,\rho}$ is continuous in positive domain and satisfies
\begin{eqnarray}\label{3.6}
	(x-y)(F_{\gamma,\rho}(x)-F_{\gamma,\rho}(y))\leq Q_{\gamma,\rho}(x-y)^{2}, \ \ \forall x,y\in\mathbb{R}_{+}.
\end{eqnarray}
Therefore, with the aid of Lemma 2.3 in \cite{Neuenkirch2014First}, we claim that the numerical method $\eqref{3.4}$ is well-defined and positivity-preserving.

\begin{lemma}\label{lem3.1}
Let Assumption \ref{ass2.2} hold and let $\gamma\geq 2\rho-1$. If $Q_{\gamma,\rho}\Delta t<1$, then the numerical method $\eqref{3.4}$ admits a unique solution and $\mathbb{P}\big(\{\bar{Z}_{t_{k+1}}>0\vert\bar{Z}_{t_{k}}>0\}\big)=1$.
\end{lemma}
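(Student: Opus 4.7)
The plan is to handle the two substeps in \eqref{3.4} separately: the implicit drift step that produces $\bar{Z}_{t_{k+1}-}$ from $\bar{Z}_{t_k}$, and the explicit jump step that produces $\bar{Z}_{t_{k+1}}$ from $\bar{Z}_{t_{k+1}-}$. Conditioning on $\bar{Z}_{t_k}>0$, the first line of \eqref{3.4} is, for fixed $\omega$, a scalar nonlinear equation
\begin{equation*}
G(y) := y - F_{\gamma,\rho}(y)\,\Delta t_k = \bar{Z}_{t_k} + (1-\rho)\alpha_3\,\Delta W_k =: c,
\end{equation*}
to be solved for $y\in(0,\infty)$. I would apply (a slight adaptation of) Lemma 2.3 in \cite{Neuenkirch2014First} to this equation: the hypothesis $Q_{\gamma,\rho}\Delta t<1$ combined with the one-sided estimate $F'_{\gamma,\rho}\le Q_{\gamma,\rho}$ from \eqref{3.6} yields $G'(y)\ge 1-Q_{\gamma,\rho}\Delta t_k>0$, so $G$ is strictly increasing on $(0,\infty)$.

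To get existence on all of $\mathbb{R}\ni c$ (note that $c$ can be arbitrarily negative due to the Gaussian increment $\Delta W_k$), I would verify that $G$ is a surjection $(0,\infty)\to\mathbb{R}$. Inspecting \eqref{11}, as $x\downarrow 0$ the terms $\alpha_2 x^{-(\gamma-\rho)/(\rho-1)}$ and $\tfrac{\rho\alpha_3^2}{2}x^{-1}$ dominate with positive sign (using $\gamma\ge 2\rho-1>\rho$ and $\rho>1$), so $F_{\gamma,\rho}(x)\to+\infty$ and hence $G(x)\to-\infty$; as $x\to\infty$, the leading negative term $-\alpha_{-1}x^{(\rho+1)/(\rho-1)}$ drives $F_{\gamma,\rho}(x)\to-\infty$ and hence $G(x)\to+\infty$. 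Strict monotonicity plus continuity plus these two limits give a unique positive preimage $\bar{Z}_{t_{k+1}-}$ of $c$. This is the technical heart of the lemma, and I expect the boundary behaviour as $x\downarrow 0$ is where one must be a bit careful to check that the right terms in $F_{\gamma,\rho}$ indeed dominate under $\gamma\ge 2\rho-1$.

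For the jump step, the deterministic case $\Delta N_k=0$ is immediate: $\bar{Z}_{t_{k+1}}=\bar{Z}_{t_{k+1}-}>0$. If $\Delta N_k=1$, then
\begin{equation*}
\bar{Z}_{t_{k+1}} = \bigl(\bar{Z}_{t_{k+1}-}^{1/(1-\rho)} + h(\bar{Z}_{t_{k+1}-}^{1/(1-\rho)})\bigr)^{1-\rho}.
\end{equation*}
Setting $x:=\bar{Z}_{t_{k+1}-}^{1/(1-\rho)}>0$ (since $\bar{Z}_{t_{k+1}-}>0$ and $1-\rho\ne 0$), the second part of Assumption \ref{ass2.2} gives $x+h(x)\ge rx>0$, so raising to the (nonzero real) power $1-\rho$ is well defined and yields a positive number. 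Uniqueness of $\bar{Z}_{t_{k+1}}$ is then automatic because the jump map is explicit. Combining the two substeps gives the claimed unique solvability and $\mathbb{P}\bigl(\{\bar{Z}_{t_{k+1}}>0\mid \bar{Z}_{t_k}>0\}\bigr)=1$.
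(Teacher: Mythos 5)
Your proposal is correct and follows essentially the same route as the paper: the paper's proof simply invokes Lemma 2.3 of \cite{Neuenkirch2014First} for unique positive solvability of the implicit drift step under $Q_{\gamma,\rho}\Delta t<1$ and then uses $x+h(x)\geq rx>0$ from Assumption \ref{ass2.2} for the explicit jump step, exactly as you do. Your additional verification of the monotonicity and surjectivity of $y\mapsto y-F_{\gamma,\rho}(y)\Delta t_k$ (via the boundary behaviour of $F_{\gamma,\rho}$ under $\gamma\geq 2\rho-1$) is a correct unpacking of what that cited lemma provides.
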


\begin{proof}
We have $\bar{Z}_{0}=x_{0}^{1-\rho}>0$. By $Q_{\gamma,\rho}\Delta t< 1$, it follows from Lemma 2.3 in \cite{Neuenkirch2014First} that the BEM in the above setting is positivity preserving. Thus, in view of $\eqref{3.4}$, $\bar{Z}_{t_{k}}>0$ implies $\bar{Z}_{t_{k+1}-}>0$, with probability one. By Assumption $\ref{ass2.2}$, we have $x+h(x)>0$ for all $x>0$. If $\Delta N_{k}=1$, then
$$\bar{Z}_{t_{k+1}}=\big(\bar{Z}_{t_{k+1}-}^{\frac{1}{1-\rho}}+h(\bar{Z}_{t_{k+1}-}^{\frac{1}{1-\rho}})\big)^{1-\rho}>0,\ \ a.s.$$
If $\Delta N_{k}=0$, then $\bar{Z}_{t_{k+1}}=\bar{Z}_{t_{k+1}-}>0$, a.s. Consequently, the numerical method $\eqref{3.4}$ is well-defined and positivity-preserving.
\end{proof}

\subsection{Convergence rate for the transformed jump-extended SDE}
In this part, we formulate the convergence rate of the JABEM for the jump-extended SDE $\eqref{3.1}$. According to Proposition $\ref{pro2.4}$, we have the following result.

\begin{corollary}\label{cor3.2}
Let Assumption $\ref{ass2.2}$ hold. If one of the following conditions holds:
\begin{itemize}
	\item $\gamma>2\rho-1$, $p\in(-\infty,\infty)$,
	\item $\gamma=2\rho-1$, $p\in(-\frac{\alpha_{2}/\alpha_{3}^{2}-\rho+\frac{3}{2}}{\rho-1},\infty)$,
\end{itemize}
 then the exact solution of the jump-extended SDE \eqref{3.1} satisfies
\begin{eqnarray}
\mathbb{E}\big[\sup_{t\in[0,T]}\vert Z_{t}\vert^{p}\big]<\infty.
\end{eqnarray}
\end{corollary}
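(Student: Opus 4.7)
The plan is to deduce this corollary directly from Proposition~\ref{pro2.4} by exploiting the definition $Z_t = X_t^{1-\rho}$. Since the supremum commutes with the power map in the sense that
\begin{equation*}
\sup_{t\in[0,T]} |Z_t|^p = \sup_{t\in[0,T]} X_t^{p(1-\rho)},
\end{equation*}
proving $\mathbb{E}\bigl[\sup_{t\in[0,T]} |Z_t|^p\bigr] < \infty$ is equivalent to proving $\mathbb{E}\bigl[\sup_{t\in[0,T]} X_t^{q}\bigr] < \infty$ with $q := p(1-\rho)$. Because $\rho > 1$, the map $p \mapsto q = p(1-\rho)$ is an orientation-reversing linear bijection of $\mathbb{R}$ onto itself, so the task reduces to a change of variable in the admissible range for $q$ provided by Proposition~\ref{pro2.4}.

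First, I would handle the case $\gamma > 2\rho - 1$. Here Proposition~\ref{pro2.4} gives $\mathbb{E}\bigl[\sup_{t\in[0,T]} |X_t|^q\bigr] < \infty$ for every $q \in \mathbb{R}$. Since $q$ ranges over all of $\mathbb{R}$ as $p$ does, this immediately yields the assertion for $p \in (-\infty, \infty)$.

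Next, I would address the critical case $\gamma = 2\rho - 1$. Proposition~\ref{pro2.4} requires $q < \alpha_2/\alpha_3^2 - \rho + 3/2$. Substituting $q = p(1-\rho)$ and dividing by the negative number $1-\rho$ (which flips the inequality) gives
\begin{equation*}
p > -\frac{\alpha_2/\alpha_3^2 - \rho + 3/2}{\rho - 1},
\end{equation*}
which is exactly the condition stated in the corollary. One should also note that $\sup_{t\in[0,T]} |Z_t|^p$ should technically be handled via a monotonicity consideration ($x \mapsto x^p$ is monotone on $\mathbb{R}_+$, with a sign depending on $p$), but since $X_t > 0$ almost surely by Proposition~\ref{pro2.2}, the supremum in $Z$ corresponds either to the supremum or the infimum of $X_t^{|p(1-\rho)|}$, both of which are controlled by $\mathbb{E}\bigl[\sup_{t\in[0,T]} X_t^{|q|}\bigr] + \mathbb{E}\bigl[\sup_{t\in[0,T]} X_t^{-|q|}\bigr]$; both terms are finite under the hypotheses by a second application of Proposition~\ref{pro2.4}.

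There is no serious obstacle here, as the work has been absorbed into Proposition~\ref{pro2.4}; the only care needed is the bookkeeping of the inequality flip induced by $1 - \rho < 0$ and the observation that positivity of $X_t$ lets us treat $|Z_t|^p = X_t^{p(1-\rho)}$ unambiguously.
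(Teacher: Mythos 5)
Your proposal is correct and follows essentially the same route as the paper, which simply derives Corollary~\ref{cor3.2} from Proposition~\ref{pro2.4} via the identity $\vert Z_{t}\vert^{p}=X_{t}^{p(1-\rho)}$ and the sign flip coming from $1-\rho<0$. The change of exponent $q=p(1-\rho)$ and the resulting translation of the admissible range in the critical case $\gamma=2\rho-1$ are exactly the intended bookkeeping.
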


Between jump times the evolution of SDE $\eqref{3.1}$ is that of a diffusion without jumps. By Lemmas 2.1, 2.2 in \cite{Szpruch2011Numerical} and referring to the proof of Proposition $\ref{pro2.4}$, under the same conditions of Corollary $\ref{cor3.2}$, we have 
\begin{eqnarray}\label{b17}
	\mathbb{E}\big[\sup_{t\in[0,T]}\vert Z_{t-}\vert^{p}\big]<\infty.
\end{eqnarray}
Recalling functions $F_{\gamma,\rho}$ and $F'_{\gamma,\rho}$, we show
\begin{eqnarray}
	\!\!F''_{\gamma,\rho}(x)\!&=&\!-\frac{2\alpha_{-1}(\rho+1)}{\rho-1}x^{\frac{3-\rho}{\rho-1}}\!+\!\frac{\alpha_{0}\rho}{\rho-1}x^{\frac{2-\rho}{\rho-1}}\!+\!\frac{\alpha_{2}(\gamma-\rho)(\gamma-1)}{\rho-1}x^{-\frac{\gamma+\rho-2}{\rho-1}}\nonumber\\
	&&+(\rho-1)\rho\alpha_{3}^{2}x^{-3}.
\end{eqnarray}
If $\gamma>2\rho-1$, for any $q\geq 1$, we calculate
\begin{eqnarray}\label{3.9}
	\!\!\!\!\!&&\!\!\!\!\!\mathbb{E}\Big[\sup_{t\in[0,T]}\big\vert(F'_{\gamma,\rho}F_{\gamma,\rho})(Z_{t-})\!+\!\frac{1}{2}(1\!-\!\rho)^{2}\alpha_{3}^{2}F''_{\gamma,\rho}(Z_{t-})\big\vert^{2q}\Big]\!+\!\mathbb{E}\Big[\sup_{t\in[0,T]}\big\vert F'_{\gamma,\rho}(Z_{t-})\big\vert^{2q}\Big]\nonumber\\
	\!\!&<&\!\!\infty.
\end{eqnarray}
If $\gamma=2\rho-1$, for any $1\leq q<\frac{\alpha_{2}/\alpha_{3}^{2}-\rho+\frac{3}{2}}{6(\rho-1)}$, we have $(-3)\times 2q>-\frac{\alpha_{2}/\alpha_{3}^{2}-\rho+\frac{3}{2}}{\rho-1}$ and it is easy to verify that \eqref{3.9} holds. 

To obtain the convergence rate, we need the following additional condition imposed on jump coefficient $h$.
\begin{assumption}\label{ass3.3}
	There are two constants $\mu_{1}(\rho),\mu_{2}(\rho)$, satisfying $0<\mu_{1}(\rho)\leq \mu_{2}(\rho)<\infty$, such that
	\begin{eqnarray}
	\big(1+h(x)/x\big)^{-\rho}\big(1+h'(x)\big)\in[\mu_{1},\mu_{2}], \ \ \forall x>0.	
	\end{eqnarray}
\end{assumption}

\begin{remark}[\cite{Xu2017Transformed}]
We highlight that the family of jump coefficient $h$ satisfying Assumptions $\ref{ass2.2}$ and $\ref{ass3.3}$ is more general than the linear functions studied in \cite{Deng2019Generalized}. Evidently, the assumptions can be fulfilled if there exist constants $r_{1}, r_{2}, L_{2}>0$ and $L_{1}>-1$ such that $r_{1}x\leq x+h(x) \leq r_{2}x$ and $L_{1}\leq h'(x)\leq L_{2}$ for all $x>0$, which allows for the linear function $h(x)=\varrho x$ and also some nonlinear functions like $h(x) =\varrho\sin{x}$, $h(x)=\frac{\varrho x}{1+x}$ for $\varrho>-1$ and so on.
\end{remark}

\begin{theorem}\label{thm3.5}
Let Assumptions $\ref{ass2.2}$ and $\ref{ass3.3}$ hold. If one of the following conditions holds:
\begin{itemize}
	\item $\gamma>2\rho-1$, $\eta\geq 1$,
	\item $\gamma=2\rho-1$, $\eta\in[1, \frac{\alpha_{2}/\alpha_{3}^{2}-\rho+\frac{3}{2}}{12(\rho-1)})$,
\end{itemize}
 then the exact solution of $\eqref{3.1}$ and numerical solution given by \eqref{3.4} satisfy
\begin{eqnarray}\label{3.11}
	\mathbb{E}\big[\sup_{k=0,1,...,n_{T}}\big\vert\bar{Z}_{t_{k}}-Z_{t_{k}}\big\vert^{\eta}\big]\leq C(\Delta t)^{\eta}.
\end{eqnarray}	
\end{theorem}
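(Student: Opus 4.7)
The plan is to split the one-step analysis of scheme~\eqref{3.4} along the random mesh~$\mathcal{T}$: handle the pure-diffusion step on each sub-interval $[t_k,t_{k+1})$ and the deterministic jump map at each $t_{k+1}$, iterate, and then take the sup after conditioning on $\mathcal{F}^{N}_{T}$. Conditioning on the Poisson filtration turns the path-dependent mesh into a deterministic one; the final integration against the Poisson law is harmless because $\mathbb{E}[\mu_{2}^{qN_T}]=\exp(\lambda T(\mu_{2}^q-1))<\infty$ for every $q\geq 1$.

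On a diffusion sub-interval the Wiener increments in~\eqref{3.3} and~\eqref{3.4} agree, so setting $E_k:=\bar Z_{t_k}-Z_{t_k}$ one obtains $E_{k+1}^{-}-E_k=\Delta t_k\bigl[F_{\gamma,\rho}(\bar Z_{t_{k+1}-})-F_{\gamma,\rho}(Z_{t_{k+1}-})\bigr]+R_k$ with $R_k:=\int_{t_k}^{t_{k+1}-}\bigl[F_{\gamma,\rho}(Z_{t_{k+1}-})-F_{\gamma,\rho}(Z_t)\bigr]\,dt$. Applying It\^o's formula to $F_{\gamma,\rho}(Z)$ on the pure-diffusion piece and using Fubini splits $R_k=R_k^{(1)}+R_k^{(2)}$, where $R_k^{(1)}=\int_{t_k}^{t_{k+1}-}(s-t_k)\bigl[F_{\gamma,\rho}'F_{\gamma,\rho}+\tfrac12(1-\rho)^2\alpha_3^2 F_{\gamma,\rho}''\bigr](Z_s)\,ds$ is a bounded-variation residual and $R_k^{(2)}=(1-\rho)\alpha_3\int_{t_k}^{t_{k+1}-}(s-t_k)F_{\gamma,\rho}'(Z_s)\,dW_s$ is a centered It\^o martingale. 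Pairing with $E_{k+1}^{-}$, the one-sided Lipschitz bound~\eqref{3.6} absorbs the implicit-drift difference; Young's inequality combined with the moment controls of~\eqref{3.9} and~\eqref{b17} then delivers a one-step recursion of the form $|E_{k+1}^{-}|^2\leq(1+C\Delta t_k)|E_k|^2+C\Delta t_k^3\,\Xi_k+2E_k R_k^{(2)}$, in which $\Xi_k$ is integrable and the last term is a martingale increment relative to $(\mathcal{F}_{t_k})_k$.

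At a jump time, both solutions pass through $\phi(z):=\bigl(z^{1/(1-\rho)}+h(z^{1/(1-\rho)})\bigr)^{1-\rho}$; a direct chain-rule computation gives $\phi'(z)=(1+h(x)/x)^{-\rho}(1+h'(x))$ with $x=z^{1/(1-\rho)}$, which lies in $[\mu_1,\mu_2]$ by Assumption~\ref{ass3.3}, so $|E_{k+1}|\leq\mu_{2}|E_{k+1}^{-}|$. Merging the diffusion and jump steps and iterating, $\prod_k(1+C\Delta t_k)\leq e^{CT}$ and $\prod_k\mu_{2}^{2\Delta N_k}=\mu_{2}^{2N_T}$; taking $\mathbb{E}^N\bigl[\sup_{k\leq n_T}\cdot\bigr]$, the residual sums to $O(\Delta t^2)$ via $\sum_k\Delta t_k^3\leq T\Delta t^2$, and the discrete martingale $\sum_j E_j R_j^{(2)}$ is controlled by BDG on its quadratic variation $\sum_j E_j^2\Delta t_j^3\leq T\Delta t^2\sup_j|E_j|^2$, followed by a Young-type absorption of $\sup_j|E_j|^2$ into the left-hand side.

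The upgrade from $L^2$ to general $L^\eta$ replays the scheme with the test function $|\cdot|^\eta$; the critical-regime window $\eta\in\bigl[1,(\alpha_2/\alpha_3^2-\rho+\tfrac32)/(12(\rho-1))\bigr)$ appears at exactly this step because Young at exponent $\eta$ ultimately demands moments of order $4\eta$ of the drift-type integrand, which~\eqref{3.9} supplies with $q=2\eta$. The main obstacle I anticipate is not the one-sided Lipschitz mechanics but the bookkeeping around the random mesh and the preservation of the martingale structure inside the sup: a naive term-by-term Cauchy--Schwarz on $E_k R_k^{(2)}$ would cost a factor of $\sqrt{\Delta t}$, so $\sum_j E_j R_j^{(2)}$ must be kept intact and controlled through a single application of discrete BDG, with a self-referential bound on $\mathbb{E}[\sup_j|E_j|^\eta]$ that gets absorbed back into the left-hand side for $\Delta t$ sufficiently small.
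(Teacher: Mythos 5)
Your overall architecture matches the paper's: the same jump-adapted one-step error equation, the same It\^{o}/Fubini expansion of the local residual into a Lebesgue part and a weighted stochastic integral (your $R_k^{(1)},R_k^{(2)}$ are the paper's $\Xi_{k+1}$), the same use of Assumption \ref{ass3.3} to bound the jump-map derivative by $\mu_2$, conditioning on $\mathcal{F}^N_T$ to freeze the mesh, the moment bounds \eqref{3.9}/\eqref{b17}, and a single BDG plus absorption for the martingale part; your accounting of the critical window $\eta<\frac{\alpha_2/\alpha_3^2-\rho+3/2}{12(\rho-1)}$ via $q=2\eta$ in \eqref{3.9} is also consistent. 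Where you genuinely diverge is the treatment of the implicit drift: you run a discrete energy/Gronwall recursion on $|E_k|^2$ using the one-sided Lipschitz bound \eqref{3.6} and Young's inequality, whereas the paper solves the linear error recursion \emph{exactly}, writing $e_{(k+1)-}=\psi_{k+1}^{-1}\chi_k e_{k-}+\psi_{k+1}^{-1}\Xi_{k+1}$ with $\psi_{k+1}=1-I_{\gamma,\rho,t_{k+1}}\Delta t_k$ and then controlling all moments of the products $\Psi_k,\Gamma_k$ through the Poisson moment generating function. The paper's route works at the level of the first power of the error and avoids test functions entirely; yours is more in the spirit of standard implicit-Euler energy estimates and would generalize more routinely to non-additive perturbations, at the cost of having to redo the argument for each $\eta$.

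Two points in your sketch need the care you only allude to. First, the cross term: you cannot pass from $E_{k+1}^-R_k^{(2)}$ to $E_kR_k^{(2)}$ by expanding $E_{k+1}^--E_k$, because that difference contains $\Delta t_k\bigl[F_{\gamma,\rho}(\bar Z_{t_{k+1}-})-F_{\gamma,\rho}(Z_{t_{k+1}-})\bigr]$, which has no two-sided Lipschitz control ($F'_{\gamma,\rho}\to-\infty$ at the boundary); you must instead first derive $(1-Q_{\gamma,\rho}\Delta t)\vert E_{k+1}^-\vert\leq\vert E_k+R_k\vert$ from the one-sided bound and only then square, so that the martingale increment appears as $E_kR_k^{(2)}$ with a deterministic prefactor. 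Second, and more importantly, after iterating, the weights $\prod_{j=i+1}^{k}(\mu_2\vee1)^{2\Delta N_j}(1+C\Delta t_j)$ in front of the increments depend on $k$, so the weighted sum is \emph{not} a martingale in $k$ and a direct "single discrete BDG" does not apply; this is exactly the obstruction you flag but do not resolve. The paper's resolution is to factor the weights as $A_k/A_i$ and perform the Abel summation $\sum_i\tilde\Psi_i(I_{t_{i+1}}-I_{t_i})=\tilde\Psi_kI_{t_{k+1}}+\sum_i(\tilde\Psi_{i-1}-\tilde\Psi_i)I_{t_i}$, exploiting monotonicity of $\tilde\Psi$, so that BDG is applied to the single unweighted integral $I_t$ and the weights are pulled out via their finite moments. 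With that device (or an equivalent one), together with an a priori finiteness or localization argument justifying the self-referential absorption of $\sup_j\vert E_j\vert^\eta$, your proof closes.
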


\begin{proof}
We do not measure the approximation error $\eqref{3.11}$ directly. Instead, we turn to the discrepancy between the intermediate solutions, that is, $\mathbb{E}\big[\sup_{k=0,1,...,n_{T}}\vert\bar{Z}_{t_{k}-}-Z_{t_{k}-}\vert^{\eta}\big]$ for any $\eta\geq 1$. According to $\eqref{3.3}$ and $\eqref{3.4}$, for $k\in\{1,2,...,n_{T}-1\}$, we have
\begin{eqnarray}
	Z_{t_{k+1}-}&=&Z_{t_{k}-}+\big[\big(Z_{t_{k}-}^{\frac{1}{1-\rho}}+h(Z_{t_{k}-}^{\frac{1}{1-\rho}})\big)^{1-\rho}-Z_{t_{k}-}\big]\Delta N_{k-1}\nonumber\\
	&&+\int_{t_{k}}^{t_{k+1}}F_{\gamma,\rho}(Z_{t-})\,dt+(1-\rho)\alpha_{3}\Delta W_{k},\label{3.12}\\
	\bar{Z}_{t_{k+1}-}&=&\bar{Z}_{t_{k}-}+\big[\big(\bar{Z}_{t_{k}-}^{\frac{1}{1-\rho}}+h(\bar{Z}_{t_{k}-}^{\frac{1}{1-\rho}})\big)^{1-\rho}-\bar{Z}_{t_{k}-}\big]\Delta N_{k-1}\nonumber\\
	&&+F_{\gamma,\rho}(\bar{Z}_{t_{k+1}-})\Delta t_{k}+(1-\rho)\alpha_{3}\Delta W_{k}.\label{3.13}
\end{eqnarray}
Note that the process $Z_{t-}$ is a solution of SDE without jump when $t\in[t_{k},t_{k+1})$ and we have
\begin{eqnarray}
	Z_{t_{k+1}-}=Z_{t-}+\int_{t}^{t_{k+1}}F_{\gamma,\rho}(Z_{s-})\,ds+\int_{t}^{t_{k+1}}\alpha_{3}(1-\rho)\,dW_s.
\end{eqnarray}
Consequently, $\eqref{3.12}$ can be rewritten as
\begin{eqnarray}
Z_{t_{k+1}-}&=&Z_{t_{k}-}+\big[\big(Z_{t_{k}-}^{\frac{1}{1-\rho}}+h(Z_{t_{k}-}^{\frac{1}{1-\rho}})\big)^{1-\rho}-Z_{t_{k}-}\big]\Delta N_{k-1}\nonumber\\
	&&+F_{\gamma,\rho}(Z_{t_{k+1}-})\Delta t_{k}+(1-\rho)\alpha_{3}\Delta W_{k}-\Xi_{k+1},\nonumber
\end{eqnarray}
where $\Xi_{k+1}:=\int_{t_{k}}^{t_{k+1}}[F_{\gamma,\rho}(Z_{t_{k+1}-})-F_{\gamma,\rho}(Z_{t-})]\,dt$.
Using It\^{o}'s formula, we deduce
\begin{eqnarray}\label{3.15}
	\Xi_{k+1}&=&\int_{t_{k}}^{t_{k+1}}\int_{t}^{t_{k+1}}\big[(F'_{\gamma,\rho}F_{\gamma,\rho})(Z_{s-})+\frac{1}{2}(1-\rho)^{2}\alpha_{3}^{2}F''_{\gamma,\rho}(Z_{s-})\big]\,ds\,dt\nonumber\\
	&&+(1-\rho)\alpha_{3}\int_{t_{k}}^{t_{k+1}}\int_{t}^{t_{k+1}}F'_{\gamma,\rho}(Z_{s-})\,dW_s\,dt\nonumber\\
	&=&\int_{t_{k}}^{t_{k+1}}(s-t_{k})\big[(F'_{\gamma,\rho}F_{\gamma,\rho})(Z_{s-})+\frac{1}{2}(1-\rho)^{2}\alpha_{3}^{2}F''_{\gamma,\rho}(Z_{s-})\big]\,ds\nonumber\\
	&&+(1-\rho)\alpha_{3}\int_{t_{k}}^{t_{k+1}}(s-t_{k})F'_{\gamma,\rho}(Z_{s-})\,dW_s,
\end{eqnarray}
where we have used stochastic Fubini theorem. It follows from \eqref{3.12} and \eqref{3.13} that
\begin{eqnarray}\label{3.17}
&&\bar{Z}_{t_{k+1}-}-Z_{t_{k+1}-}\nonumber\\
	&=&\bar{Z}_{t_{k}-}-Z_{t_{k}-}+\Xi_{k+1}+\big(F_{\gamma,\rho}(\bar{Z}_{t_{k+1}-})-F_{\gamma,\rho}(Z_{t_{k+1}-})\big)\Delta t_{k}\\
&&+\big[\big(\bar{Z}_{t_{k}-}^{\frac{1}{1-\rho}}+h(\bar{Z}_{t_{k}-}^{\frac{1}{1-\rho}})\big)^{1-\rho}-\big(Z_{t_{k}-}^{\frac{1}{1-\rho}}+h(Z_{t_{k}-}^{\frac{1}{1-\rho}})\big)^{1-\rho}-\bar{Z}_{t_{k}-}+Z_{t_{k}-}\big]\Delta N_{k-1}.\nonumber
\end{eqnarray}
To deal with the last two terms on the right-hand side of the equality above, we set
$$I_{\gamma,\rho,t_{k+1}}:=\int_{0}^{1}F'_{\gamma,\rho}(Z_{t_{k+1}-}+\theta(\bar{Z}_{t_{k+1}-}-Z_{t_{k+1}-}))\,d\theta.$$
Then $F_{\gamma,\rho}(\bar{Z}_{t_{k+1}-})-F_{\gamma,\rho}(Z_{t_{k+1}-})=I_{\gamma,\rho,t_{k+1}}(\bar{Z}_{t_{k+1}-}-Z_{t_{k+1}-})$. Recalling $\eqref{3.6}$ gives
\begin{eqnarray}\label{a3.17}
 I_{\gamma,\rho,t_{k+1}}\leq Q_{\gamma,\rho}, \ \ a.s. 	
\end{eqnarray}
 At the same time, using the mean value theorem yields
\begin{eqnarray}
&&\big(\bar{Z}_{t_{k}-}^{\frac{1}{1-\rho}}+h(\bar{Z}_{t_{k}-}^{\frac{1}{1-\rho}})\big)^{1-\rho}-\big(Z_{t_{k}-}^{\frac{1}{1-\rho}}+h(Z_{t_{k}-}^{\frac{1}{1-\rho}})\big)^{1-\rho}\nonumber\\
&=&\big(1+h(\varsigma_{k-}^{\frac{1}{1-\rho}})/\varsigma_{k-}^{\frac{1}{1-\rho}}\big)^{-\rho}\big(1+h'(\varsigma_{k-}^{\frac{1}{1-\rho}})\big)(\bar{Z}_{t_{k}-}-Z_{t_{k}-}),
\end{eqnarray}
where $\varsigma_{k-}$ is $\mathcal{F}_{t_{k}}$-measurable and  $\varsigma_{k-}$ takes value between $\bar{Z}_{t_{k}-}$ and $Z_{t_{k}-}$. For $k\in\{1,2,...,n_{T}\}$, we set
\begin{eqnarray}
&& e_{k-}:=\bar{Z}_{t_{k}-}-Z_{t_{k}-},\ \ \psi_{k}:=1-I_{\gamma,\rho,t_{k}}\Delta t_{k-1},\label{3.20}\\
&& \chi_{k}:=1+\big(1+h(\varsigma_{k-}^{\frac{1}{1-\rho}})/\varsigma_{k-}^{\frac{1}{1-\rho}}\big)^{-\rho}\big(1+h'(\varsigma_{k-}^{\frac{1}{1-\rho}})\big)\Delta N_{k-1}-\Delta N_{k-1}.\label{3.21}
\end{eqnarray}
Let $\Delta t$ be sufficiently small such that $Q_{\gamma,\rho}\Delta t< \xi$ for $\xi\in(0,1)$. Since $\psi_{k}\geq 1-Q_{\gamma,\rho}\Delta t>0$ a.s., $\eqref{3.17}$ can be abbreviated as
\begin{eqnarray}\label{3.22}
	e_{(k+1)-}=\psi_{k+1}^{-1}\chi_{k}e_{k-}+\psi_{k+1}^{-1}\Xi_{k+1},\ \ k\in\{1,2,...,n_{T}-1\}.
\end{eqnarray}
Repeating the iteration $\eqref{3.22}$, we obtain
\begin{eqnarray}\label{3.23}
	e_{(k+1)-}&=&(\prod_{i=2}^{k+1}\psi_{i}^{-1})(\prod_{j=1}^{k}\chi_{j})e_{1-}+\sum_{i=1}^{k}(\prod_{l=i+1}^{k+1}\psi_{l}^{-1})(\prod_{j=i+1}^{k}\chi_{j})\Xi_{i+1}\nonumber\\
	&=&\sum_{i=0}^{k}(\prod_{l=i+1}^{k+1}\psi_{l}^{-1})(\prod_{j=i+1}^{k}\chi_{j})\Xi_{i+1}\nonumber\\
	&=&(\prod_{l=1}^{k+1}\psi_{l}^{-1})(\prod_{j=1}^{k}\chi_{j})\cdot\sum_{i=0}^{k}(\prod_{l=1}^{i}\psi_{l})(\prod_{j=1}^{i}\chi_{j}^{-1})\Xi_{i+1},
\end{eqnarray}
where $\Xi_{1}:=\psi_{1}e_{1-}$. For $k\in\{1,2,...,n_{T}\}$, we set
$$\Psi_{0}:=1,\ \ \Psi_{k}:=\prod_{l=1}^{k}\psi_{l},\ \ \tilde{\Psi}_{0}:=1,\ \ \tilde{\Psi}_{k}:=\frac{\Psi_{k}}{(1-Q_{\gamma,\rho}\Delta t)^{k}}, \ \ \Gamma_{k}:=\prod_{j=1}^{k}\chi_{j}.$$
It is obvious that $\Psi_{k}, \tilde{\Psi}_{k}$ and $\Gamma_{k}$ are $\mathcal{F}_{t_{k}}$-measurable. Moreover, it is easy to see that $\Psi_{k}>0, \tilde{\Psi}_{k}\geq 1$ and $\tilde{\Psi}_{k}$ is nondecreasing in $k$ almost surely since $I_{\gamma,\rho,t_{k}}\Delta t_{k-1}\leq Q_{\gamma,\rho}\Delta t$ a.s. for all $k\in\{1,2,...,n_{T}\}$ by $\eqref{a3.17}$. For any $\beta\in\mathbb{N}$, we will show that
$$\mathbb{E}\Big[\sup_{k\in\{0,1,...,n_{T}\}}\Big\vert\frac{\tilde{\Psi}_{k}}{\Psi_{k}}\Big\vert^{\beta}\Big]<\infty,  \ \ \mathbb{E}\Big[\sup_{0\leq l\leq k\leq n_{T}}\Big\vert\frac{\Psi_{l}}{\Psi_{k}}\Big\vert^{\beta}\Big]<\infty.$$
Indeed,
\begin{eqnarray}
\mathbb{E}\Big[\sup_{k\in\{0,1,...,n_{T}\}}\Big\vert\frac{\tilde{\Psi}_{k}}{\Psi_{k}}\Big\vert^{\beta}\Big]\!&=&\!\mathbb{E}\big[(1-Q_{\gamma,\rho}\Delta t)^{-\beta n_{T}}\big]\nonumber\\
\!&\leq&\!(1-Q_{\gamma,\rho}\Delta t)^{-\beta M}\mathbb{E}\big[(1-Q_{\gamma,\rho}\Delta t)^{-\beta N(T)}\big]\nonumber\\
\!&\leq&\!(1-Q_{\gamma,\rho}\Delta t)^{-\beta M}\sum_{i=0}^{\infty}(1-\xi)^{-\beta i}e^{-\lambda T}\frac{(\lambda T)^{i}}{i!}\!<\!\infty.
\end{eqnarray}
Moreover, for $k\in\{1,2,...,n_{T}\}$, $\chi_{k}=1$ if $t_{k}$ isn't a jump time and
$$\chi_{k}=\big(1+h(\varsigma_{k-}^{\frac{1}{1-\rho}})/\varsigma_{k-}^{\frac{1}{1-\rho}}\big)^{-\rho}\big(1+h'(\varsigma_{k-}^{\frac{1}{1-\rho}})\big),\ \ \mbox{otherwise}.$$
Assumption $\ref{ass3.3}$ implies $0<(\mu_{1}\wedge1)\leq \chi_{k}\leq (\mu_{2}\vee1)<\infty$. As a result, we have
\begin{eqnarray}
\mathbb{E}\Big[\!\sup_{k\in\{1,2,...,n_{T}\}}\!\big\vert\Gamma_{k}\big\vert^{\beta}\Big]\!\!&\leq&\!\! \mathbb{E}\big[(\mu_{2}\vee1)^{\beta N(T)}\big]=\sum_{i=0}^{\infty}(\mu_{2}\vee1)^{\beta i}e^{-\lambda T}\frac{(\lambda T)^{i}}{i!}\!<\!\infty,\nonumber\\
\mathbb{E}\Big[\!\sup_{k\in\{1,2,...,n_{T}\}}\!\!\big\vert\Gamma_{k}\big\vert^{-\beta}\Big]\!\!&\leq&\!\! \mathbb{E}\big[(\mu_{1}\wedge1)^{-\beta N(T)}\big]=\sum_{i=0}^{\infty}(\mu_{1}\wedge1)^{-\beta i}e^{-\lambda T}\frac{(\lambda T)^{i}}{i!}\!<\!\infty.\nonumber
\end{eqnarray}
Let $\lfloor s\rfloor\!:=\!\min\{n\in\{0,1,...,n_{T}\}\colon \sum_{i=0}^{n}\Delta t_{i}\!>\!s\ \ \mbox{or}\ \ \sum_{i=0}^{n-1}\Delta t_{i}\!=\!s\}$ for $s\in[0,T]$, where $\Delta t_{n_{T}}:=0$, and let
$$I_{t}:=\int_{0}^{t}(1-Q_{\gamma,\rho}\Delta t)^{\lfloor s\rfloor}\Gamma_{\lfloor s\rfloor}^{-1}\big(s-t_{\lfloor s\rfloor}\big)F'_{\gamma,\rho}(Z_{s-})\,dW_s.$$
Using the results above, we obtain
\begin{eqnarray}
e_{(k+1)-}&=&\Psi_{k+1}^{-1}\Gamma_{k}\cdot\sum_{i=0}^{k}\Psi_{i}\Gamma_{i}^{-1}\Xi_{i+1}\nonumber\\
&=&\Gamma_{k}\cdot\sum_{i=0}^{k}\int_{t_{i}}^{t_{i+1}}\Psi_{k+1}^{-1}\Psi_{i}\Gamma_{i}^{-1}(s-t_{i})\Big[(F'_{\gamma,\rho}F_{\gamma,\rho})(Z_{s-})\nonumber\\
	&&+\frac{1}{2}(1-\rho)^{2}\alpha_{3}^{2}F''_{\gamma,\rho}(Z_{s-})\Big]\,ds+\Psi_{k+1}^{-1}\Gamma_{k}\cdot(1-\rho)\alpha_{3}\sum_{i=0}^{k}\tilde{\Psi}_{i}(I_{t_{i+1}}-I_{t_{i}})\nonumber\\
&=&\Gamma_{k}\cdot\int_{0}^{t_{k+1}}\Psi_{k+1}^{-1}\Psi_{\lfloor s\rfloor}\Gamma_{\lfloor s\rfloor}^{-1}(s-t_{\lfloor s\rfloor})\nonumber\\
	&&\cdot\Big[(F'_{\gamma,\rho}F_{\gamma,\rho})(Z_{s-})+\frac{1}{2}(1-\rho)^{2}\alpha_{3}^{2}F''_{\gamma,\rho}(Z_{s-})\Big]\,ds\nonumber\\
	&&+\Psi_{k+1}^{-1}\Gamma_{k}\cdot(1-\rho)\alpha_{3}\Big[\tilde{\Psi}_{k}I_{t_{k+1}}+\sum_{i=1}^{k}(\tilde{\Psi}_{i-1}-\tilde{\Psi}_{i})I_{t_{i}}\Big].
\end{eqnarray}
Noting that $\big\vert\tilde{\Psi}_{k}I_{t_{k+1}}+\sum_{i=1}^{k}(\tilde{\Psi}_{i-1}-\tilde{\Psi}_{i})I_{t_{i}}\big\vert\leq \tilde{\Psi}_{k}\vert I_{t_{k+1}}\vert+\sum_{i=1}^{k}(\tilde{\Psi}_{i}-\tilde{\Psi}_{i-1})\vert I_{t_{i}}\vert\leq 2\tilde{\Psi}_{k+1}\sup_{1\leq l\leq k+1}\vert I_{t_{l}}\vert$, and using the H\"{o}lder inequality, BDG inequality and $\eqref{3.9}$, we have
\begin{eqnarray}
	&&\mathbb{E}\Big[\sup_{1\leq k\leq n_{T}}\vert e_{k-}\vert^{\eta}\Big]\nonumber\\
	&\leq& C\Big(\mathbb{E}\Big[\sup_{1\leq k\leq n_{T}}\Big\vert\int_{0}^{t_{k}}\frac{\Psi_{\lfloor s\rfloor}}{\Psi_{k}}\Gamma_{\lfloor s\rfloor}^{-1}(s-t_{\lfloor s\rfloor})\Big[(F'_{\gamma,\rho}F_{\gamma,\rho})(Z_{s-})\nonumber\\
	&&+\frac{1}{2}(1-\rho)^{2}\alpha_{3}^{2}F''_{\gamma,\rho}(Z_{s-})\Big]\,ds\Big\vert^{2\eta}\Big]\Big)^{1/2}+C\mathbb{E}\Big[\sup_{1\leq k\leq n_{T}}\vert\Gamma_{k}\vert^{\eta}\cdot\sup_{1\leq k\leq n_{T}}\Big\vert\frac{\tilde{\Psi}_{k}}{\Psi_{k}}\Big\vert^{\eta}\nonumber\\
&&\cdot\sup_{1\leq k\leq n_{T}}\Big\vert\int_{0}^{t_{k}}(1-Q_{\gamma,\rho}\Delta t)^{\lfloor s\rfloor}\Gamma_{\lfloor s\rfloor}^{-1}(s-t_{\lfloor s\rfloor})F'_{\gamma,\rho}(Z_{s-})\,dW_s\Big\vert^{\eta}\Big]\nonumber\\
&\leq&C(\Delta t)^{\eta}\Big(\mathbb{E}\Big[\sup_{1\leq l\leq k\leq n_{T}}\Big\vert\frac{\Psi_{l}}{\Psi_{k}}\Big\vert^{2\eta}\cdot\sup_{1\leq k\leq n_{T}}\vert\Gamma_{k}\vert^{-2\eta}\cdot\int_{0}^{T}\Big\vert(F'_{\gamma,\rho}F_{\gamma,\rho})(Z_{s-})\nonumber\\
&&+\frac{1}{2}(1-\rho)^{2}\alpha_{3}^{2}F''_{\gamma,\rho}(Z_{s-})\Big\vert^{2\eta}\,ds\Big]\Big)^{1/2}\nonumber\\
&&+C\Big(\mathbb{E}\Big[\Big(\int_{0}^{T}\Big\vert(1-Q_{\gamma,\rho}\Delta t)^{\lfloor s\rfloor}\Gamma_{\lfloor s\rfloor}^{-1}(s-t_{\lfloor s\rfloor})F'_{\gamma,\rho}(Z_{s-})\Big\vert^{2}\,ds\Big)^{\eta}\Big]\Big)^{1/2}\nonumber\\
&\leq&C(\Delta t)^{\eta}\Big(\mathbb{E}\Big[\int_{0}^{T}\Big\vert(F'_{\gamma,\rho}F_{\gamma,\rho})(Z_{s-})+\frac{1}{2}(1-\rho)^{2}\alpha_{3}^{2}F''_{\gamma,\rho}(Z_{s-})\Big\vert^{4\eta}\,ds\Big]\Big)^{1/4}\nonumber\\
&&+C(\Delta t)^{\eta}\Big(\mathbb{E}\Big[\sup_{1\leq k\leq n_{T}}\vert\Gamma_{k}\vert^{-2\eta}\cdot\int_{0}^{T}\vert F'_{\gamma,\rho}(Z_{s-})\vert^{2\eta}\,ds\Big]\Big)^{1/2}\nonumber\\
&\leq&C(\Delta t)^{\eta}\bigg[\Big(\mathbb{E}\Big[\sup_{t\in[0,T]}\vert F'_{\gamma,\rho}(Z_{t-})\vert^{4\eta}\Big]\Big)^{1/4}+\Big(\mathbb{E}\Big[\sup_{t\in[0,T]}\Big\vert(F'_{\gamma,\rho}F_{\gamma,\rho})(Z_{t-})\nonumber\\
&&+\frac{1}{2}(1-\rho)^{2}\alpha_{3}^{2}F''_{\gamma,\rho}(Z_{t-})\Big\vert^{4\eta}\Big]\Big)^{1/4}\bigg]\nonumber\\
&\leq&C(\Delta t)^{\eta},
\end{eqnarray}
where we have used the fact that $(1-Q_{\gamma,\rho}\Delta t)^{\lfloor s\rfloor}\in(0,1)$ for $s\in[0,T]$. Therefore,
 \begin{eqnarray}\label{3.28}
 \mathbb{E}\Big[\sup_{k=0,1,...,n_{T}}\vert\bar{Z}_{t_{k}-}-Z_{t_{k}-}\vert^{\eta}\Big]\leq C(\Delta t)^{\eta}.
 \end{eqnarray}
According to $\eqref{3.3}$ and $\eqref{3.4}$, we have
\begin{eqnarray}\label{3.29}
\big\vert\bar{Z}_{t_{k}}-Z_{t_{k}}\big\vert&=&\Big\vert\bar{Z}_{t_{k}-}-Z_{t_{k}-}+\big[\big(\bar{Z}_{t_{k}-}^{\frac{1}{1-\rho}}+h(\bar{Z}_{t_{k}-}^{\frac{1}{1-\rho}})\big)^{1-\rho}-\big(Z_{t_{k}-}^{\frac{1}{1-\rho}}+h(Z_{t_{k}-}^{\frac{1}{1-\rho}})\big)^{1-\rho}\nonumber\\
&&-\bar{Z}_{t_{k}-}+Z_{t_{k}-}\big]\Delta N_{k-1}\Big\vert\nonumber\\
&\leq&(\mu_{2}\vee1)\big\vert\bar{Z}_{t_{k}-}-Z_{t_{k}-}\big\vert.
\end{eqnarray}
Finally, $\eqref{3.29}$ together with $\eqref{3.28}$ yields the required assertion.
\end{proof}

\subsection{Boundedness of inverse moments of the JABEM}
\begin{lemma}\label{lem3.6}
Let Assumptions \ref{ass2.2} and \ref{ass3.3} hold. If one of the following conditions holds:
\begin{itemize}
	\item $\gamma>2\rho-1$, $q\geq 1$,
	\item $\gamma=2\rho-1$, $q\in[1, \frac{\alpha_{2}/\alpha_{3}^{2}-\rho+\frac{3}{2}}{24(\rho-1)})$,
\end{itemize}
 then the exact solution of \eqref{3.1} and numerical solution given by \eqref{3.4} satisfy
\begin{eqnarray}
\mathbb{E}\Big[\sup_{k=1,2,...,n_{T}}\!\!\big\vert \Delta t_{k-1}F_{\gamma,\rho}(\bar{Z}_{t_{k}-})\big\vert^{2q}\Big]\!\leq\! C(\Delta t)^{2q}\!+\!C\mathbb{E}\Big[\sup_{k=1,2,...,n_{T}}\!\!\big\vert\Delta t_{k-1} F_{\gamma,\rho}(Z_{t_{k}-})\big\vert^{2q}\Big].\nonumber
\end{eqnarray}
\end{lemma}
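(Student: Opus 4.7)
The plan is to read off $\Delta t_{k-1}F_{\gamma,\rho}(\bar{Z}_{t_k-})$ directly from the implicit BEM step in $\eqref{3.4}$ and compare it with the exact-solution counterpart. Solving $\eqref{3.4}$ for the drift term (after the shift $k\mapsto k-1$) gives
\[
\Delta t_{k-1}F_{\gamma,\rho}(\bar{Z}_{t_k-})=\bar{Z}_{t_k-}-\bar{Z}_{t_{k-1}}-(1-\rho)\alpha_3\Delta W_{k-1},
\]
while the analogous manipulation of $\eqref{3.3}$, combined with the remainder $\Xi_k$ introduced in the proof of Theorem $\ref{thm3.5}$, yields
\[
\Delta t_{k-1}F_{\gamma,\rho}(Z_{t_k-})=Z_{t_k-}-Z_{t_{k-1}}+\Xi_k-(1-\rho)\alpha_3\Delta W_{k-1}.
\]
Subtracting cancels the Brownian increment and produces the cornerstone identity
\[
\Delta t_{k-1}F_{\gamma,\rho}(\bar{Z}_{t_k-})=\Delta t_{k-1}F_{\gamma,\rho}(Z_{t_k-})+e_{k-}-(\bar{Z}_{t_{k-1}}-Z_{t_{k-1}})-\Xi_k,
\]
which transfers the problem onto error quantities already studied earlier in the paper.

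Applying the elementary inequality $|a+b+c+d|^{2q}\le 4^{2q-1}(|a|^{2q}+|b|^{2q}+|c|^{2q}+|d|^{2q})$, taking $\sup_k$, and then taking expectation reduces the claim to establishing
\[
\mathbb{E}\big[\sup_k|e_{k-}|^{2q}\big]+\mathbb{E}\big[\sup_k|\bar{Z}_{t_{k-1}}-Z_{t_{k-1}}|^{2q}\big]+\mathbb{E}\big[\sup_k|\Xi_k|^{2q}\big]\le C(\Delta t)^{2q}.
\]
The first expectation is exactly $\eqref{3.28}$ with $\eta=2q$; the admissible range of $q$ in Lemma $\ref{lem3.6}$ has been tailored so that this choice of $\eta$ falls inside the admissible range of Theorem $\ref{thm3.5}$. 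The second expectation then follows immediately from the first by the per-step jump estimate $\eqref{3.29}$.

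The remaining and most delicate piece is $\mathbb{E}[\sup_k|\Xi_k|^{2q}]$. Using the representation $\eqref{3.15}$ I would split $\Xi_k$ into its pathwise drift part and its It\^o-integral part. For the drift piece, bounding $s-t_{k-1}\le\Delta t$ and applying H\"older pulls out powers of $\Delta t$, leaving an $L^{2q}$-integral of $(F'_{\gamma,\rho}F_{\gamma,\rho})(Z_{s-})+\frac{1}{2}(1-\rho)^2\alpha_3^2F''_{\gamma,\rho}(Z_{s-})$ whose finiteness is exactly $\eqref{3.9}$. For the stochastic part I would dominate $\sup_k$ by the sum over $k$, apply the BDG inequality to each summand, use the $(s-t_{k-1})^2$ weight to extract a factor $(\Delta t)^{2q}$, and then close via a further H\"older step against $\int_0^T|F'_{\gamma,\rho}(Z_{s-})|^{2q}\,ds$, again finite by $\eqref{3.9}$. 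The main hurdle is the exponent arithmetic: the crude bound $\sup_k\le\sum_k$ costs a factor $(\Delta t)^{-1}$, but the $(s-t_{k-1})^2$ weight buys back three net powers of $\Delta t$, so $3q-1\ge 2q$ (true exactly for $q\ge 1$) is what closes the estimate. The sharper restriction $q<\frac{\alpha_2/\alpha_3^2-\rho+\frac{3}{2}}{24(\rho-1)}$ in the critical case $\gamma=2\rho-1$ arises from simultaneously demanding $\eta=2q$ in Theorem $\ref{thm3.5}$ and $\eqref{3.9}$ at the exponent $2q$; the path-dependence of $\mathcal{T}$ is harmless because all the resulting moment integrals are taken over the fixed interval $[0,T]$.
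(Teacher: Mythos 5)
Your proposal is correct, and it reaches the conclusion by a genuinely more direct route than the paper. The paper resolves the implicitness of the scheme with the mean value theorem: it splits $F_{\gamma,\rho}(Z_{t_{k}-})-F_{\gamma,\rho}(\bar{Z}_{t_{k}-})$ into a local error $J_{1}$ (same initial datum, exact flow versus one implicit Euler step) and a propagation error $J_{2}$, solves the resulting fixed-point relations to produce resolvent factors $(1-M_{\gamma,\rho,t_{k}}\Delta t_{k-1})^{-1}$, bounds $\mathbb{E}[\sup_{k}\vert\Xi_{k}/\Delta t_{k-1}\vert^{2q}]\leq C$ in \eqref{3.40}, and invokes Theorem \ref{thm3.5} for $J_{2}$. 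Your cornerstone identity is obtained instead by subtracting the two defining equations so that the Brownian increments cancel; it is a rearrangement of \eqref{3.17} and bypasses both the flow notation and the mean value theorem. The price is that you must show $\mathbb{E}[\sup_{k}\vert\Xi_{k}\vert^{2q}]\leq C(\Delta t)^{2q}$ outright, but this is weaker than the paper's own \eqref{3.40} (multiply by $(\Delta t_{k-1})^{2q}\leq(\Delta t)^{2q}$), so no new estimate is required, and your accounting of the $q$-ranges is right: Theorem \ref{thm3.5} with $\eta=2q$ is the binding constraint in the critical case and yields exactly $q<\frac{\alpha_{2}/\alpha_{3}^{2}-\rho+3/2}{24(\rho-1)}$, while \eqref{3.9} at exponent $2q$ is less restrictive. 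The one place your sketch is rougher than it should be is the stochastic part of $\Xi_{k}$: dominating $\sup_{k}$ by $\sum_{k}$ and applying BDG summand by summand is delicate because the subintervals $[t_{k-1},t_{k}]$ and the number of terms $n_{T}$ are both random (they depend on the Poisson path), so one would have to condition on $\mathcal{F}_{T}^{N}$ first, and your exponent count $3q-1\geq 2q$ is only just sufficient at $q=1$. The cleaner closure --- the one the paper itself uses --- is to write the integrand as $(s-t_{\lfloor s\rfloor})F'_{\gamma,\rho}(Z_{s-})$, bound $\vert s-t_{\lfloor s\rfloor}\vert\leq\Delta t$, and apply BDG once to the global martingale on $[0,T]$, which delivers $C(\Delta t)^{2q}$ directly with no counting of subintervals. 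With that repair your argument is complete and somewhat shorter than the paper's.
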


The proof of this lemma is given in the Appendix. Now we employ the conclusions above to show the following lemma.

\begin{lemma}\label{lem3.7}
Let $m:=\frac{\gamma-\rho}{\rho-1}$ and let the conditions of Lemma \ref{lem3.6} hold. If one of the following conditions holds:
\begin{itemize}
	\item $\gamma>2\rho-1$, $q\geq 1$,
	\item $\gamma=2\rho-1$, $q\in[1, \frac{\alpha_{2}/\alpha_{3}^{2}-\rho+\frac{3}{2}}{24(\rho+1)})$,
\end{itemize}
 then the exact solution of $\eqref{3.1}$ and numerical solution given by \eqref{3.4} satisfy
\begin{eqnarray}
\mathbb{E}\Big[\sup_{k=1,2,...,n_{T}}\!\big(\Delta t_{k-1}\big)^{2q}\bar{Z}_{t_{k}}^{-2mq}\Big]\!\leq\! C(\Delta t)^{2q}+C\mathbb{E}\big[\sup_{k=1,2,...,n_{T}}\big(\Delta t_{k-1}\big)^{2q}Z_{t_{k}}^{-2mq}\big].\nonumber
\end{eqnarray}	
\end{lemma}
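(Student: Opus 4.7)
The plan is to route everything through the map $F_{\gamma,\rho}$, since Lemma \ref{lem3.6} already bridges $F_{\gamma,\rho}(\bar{Z}_{t_{k}-})$ and $F_{\gamma,\rho}(Z_{t_{k}-})$, and the singular quantity $x^{-m}$ that needs to be controlled appears explicitly as the summand $(\rho-1)\alpha_{2}x^{-m}$ in the definition \eqref{11} of $F_{\gamma,\rho}$. The overall strategy is to trade $x^{-m}$ against $F_{\gamma,\rho}(x)$ plus nonnegative polynomial terms in $x$, apply Lemma \ref{lem3.6}, and undo the trade on the $Z$-side.

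First I would dispose of the post-jump values by establishing $\bar{Z}_{t_{k}}^{-m}\leq C\bar{Z}_{t_{k}-}^{-m}$ and the symmetric $Z_{t_{k}-}^{-m}\leq CZ_{t_{k}}^{-m}$. With $y=\bar{Z}_{t_{k}-}^{1/(1-\rho)}$, the scheme gives $\bar{Z}_{t_{k}}/\bar{Z}_{t_{k}-}=(1+h(y)/y)^{1-\rho}$, so it suffices to pinch $1+h(y)/y$ between two positive constants. Assumption \ref{ass2.2} supplies the lower bound $r$, while Assumption \ref{ass3.3} combined with $|h'|\leq \mu$ supplies an upper bound through $(1+h(y)/y)^{-\rho}\geq \mu_{1}/(1+h'(y))\geq \mu_{1}/(1+\mu)$; the same argument applies to $Z$.

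Next, dropping the two nonnegative summands $(\rho-1)\alpha_{0}x^{\rho/(\rho-1)}$ and $(\rho-1)\tfrac{\rho\alpha_{3}^{2}}{2}x^{-1}$ in \eqref{11} produces the one-sided inequality
\[(\rho-1)\alpha_{2}\,x^{-m}\leq |F_{\gamma,\rho}(x)|+(\rho-1)\alpha_{-1}x^{\frac{\rho+1}{\rho-1}}+(\rho-1)\alpha_{1}x.\]
Evaluating at $x=\bar{Z}_{t_{k}-}$, multiplying by $\Delta t_{k-1}$ (and using $\Delta t_{k-1}\leq \Delta t$ on the polynomial summands), raising to the $2q$-th power, and then pushing $\mathbb{E}[\sup_{k}\,\cdot]$ through, I would bound $\mathbb{E}[\sup_{k}(\Delta t_{k-1})^{2q}\bar{Z}_{t_{k}-}^{-2mq}]$ by $C\,\mathbb{E}[\sup_{k}(\Delta t_{k-1})^{2q}|F_{\gamma,\rho}(\bar{Z}_{t_{k}-})|^{2q}]$ plus $C(\Delta t)^{2q}$ times positive-power moments of $\bar{Z}_{t_{k}-}$. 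Lemma \ref{lem3.6} turns the first piece into $C(\Delta t)^{2q}$ plus the same quantity with $Z_{t_{k}-}$ in place of $\bar{Z}_{t_{k}-}$. The matching upper bound $|F_{\gamma,\rho}(x)|\leq C(x^{-m}+x^{(\rho+1)/(\rho-1)}+x^{\rho/(\rho-1)}+x+1)$, valid for $m\geq 1$ after absorbing $x^{-1}\leq x^{-m}+1$, then reduces this to $C\,\mathbb{E}[\sup_{k}(\Delta t_{k-1})^{2q}Z_{t_{k}-}^{-2mq}]$ plus $(\Delta t)^{2q}$ times positive moments of $Z_{t-}$, and the jump comparison $Z_{t_{k}-}^{-m}\leq CZ_{t_{k}}^{-m}$ converts this into the stated right-hand side.

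The hard part will be justifying finiteness of the positive-power moments of $\bar{Z}_{t_{k}-}$ that appear. The corresponding moments of $Z_{t-}$ are given by \eqref{b17}; to handle $\bar{Z}_{t_{k}-}$ I would combine the triangle inequality $\bar{Z}_{t_{k}-}^{p}\leq 2^{p-1}(|\bar{Z}_{t_{k}-}-Z_{t_{k}-}|^{p}+Z_{t_{k}-}^{p})$ with the approximation estimate \eqref{3.28} from the proof of Theorem \ref{thm3.5}. The largest exponent to appear is $p=2q(\rho+1)/(\rho-1)$, and requiring this $p$ to lie in the range of $\eta$ admitted by Theorem \ref{thm3.5} in the critical case $\gamma=2\rho-1$, namely $\eta<(\alpha_{2}/\alpha_{3}^{2}-\rho+\tfrac{3}{2})/(12(\rho-1))$, is precisely what pins down the restriction $q<(\alpha_{2}/\alpha_{3}^{2}-\rho+\tfrac{3}{2})/(24(\rho+1))$ stated in the lemma.
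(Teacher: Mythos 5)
Your proposal is correct and follows essentially the same route as the paper: isolate the singular term $x^{-m}$ from $F_{\gamma,\rho}$ (the paper writes $F_{\gamma,\rho}(x)=c_{1}x^{-m}+u(x)$ with $\vert u(x)\vert\leq c_{2}(1+x^{\frac{\rho+1}{\rho-1}})$, which is your one-sided trade in both directions), bridge via Lemma \ref{lem3.6}, control the residual positive-power moments of $\bar{Z}_{t_{k}-}$ by the triangle inequality together with \eqref{3.28} and \eqref{b17} (exactly where the restriction $q<\frac{\alpha_{2}/\alpha_{3}^{2}-\rho+3/2}{24(\rho+1)}$ arises, as you note), and convert between pre- and post-jump values using the hypotheses on $h$. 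The only cosmetic difference is that you obtain a two-sided multiplicative comparison $\bar{Z}_{t_{k}}\asymp\bar{Z}_{t_{k}-}$ from Assumptions \ref{ass2.2} and \ref{ass3.3}, whereas the paper uses $\vert h(x)\vert\leq C(1+\vert x\vert)$ to get the additive bound \eqref{62}; both suffice.
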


\begin{proof}
For some $c_{1}>0$, in view of \eqref{11}, $F_{\gamma,\rho}(x)$ can be rewritten as
$$F_{\gamma,\rho}(x)=c_{1}x^{-m}+u(x),\ \ \forall x>0,$$
where the function $u$ satisfies $\vert u(x)\vert\leq c_{2}(1+x^{\frac{\rho+1}{\rho-1}})$ for all $x>0$ and some $c_{2}>0$. Thus for any $q\geq 1$, we have
\begin{eqnarray}
\!&&\!\mathbb{E}\big[\sup_{k=1,2,...,n_{T}}\big(\Delta t_{k-1}\big)^{2q}\bar{Z}_{t_{k}-}^{-2mq}\big]\nonumber\\
\!&=&\!\mathbb{E}\Big[\sup_{k=1,2,...,n_{T}}\big(\Delta t_{k-1}\big)^{2q}\big\vert\frac{1}{c_{1}}\big(F_{\gamma,\rho}(\bar{Z}_{t_{k}-})-u(\bar{Z}_{t_{k}-})\big)\big\vert^{2q}\Big]	\nonumber\\
\!&\leq&\!C\mathbb{E}\big[\sup_{k=1,2,...,n_{T}}\big(\Delta t_{k-1}\big)^{2q}\big\vert F_{\gamma,\rho}(\bar{Z}_{t_{k}-})\big\vert^{2q}\big]\!+\!C\mathbb{E}\big[\sup_{k=1,2,...,n_{T}}\big(\Delta t_{k-1}\big)^{2q}\big\vert u(\bar{Z}_{t_{k}-})\big\vert^{2q}\big].\nonumber
\end{eqnarray}
By Lemma $\ref{lem3.6}$, we arrive at
\begin{eqnarray}
\mathbb{E}\Big[\sup_{k=1,2,...,n_{T}}\!\!\big\vert \Delta t_{k-1}F_{\gamma,\rho}(\bar{Z}_{t_{k}-})\big\vert^{2q}\Big]\!\leq\! C(\Delta t)^{2q}\!+\!C\mathbb{E}\Big[\sup_{k=1,2,...,n_{T}}\!\!\big\vert\Delta t_{k-1} F_{\gamma,\rho}(Z_{t_{k}-})\big\vert^{2q}\Big].\nonumber
\end{eqnarray}
It is straightforward to verify that
\begin{eqnarray}
	&&\mathbb{E}\big[\sup_{k=1,2,...,n_{T}}\big\vert \Delta t_{k-1} F_{\gamma,\rho}(Z_{t_{k}-})\big\vert^{2q}\big]\nonumber\\
&=&\mathbb{E}\big[\sup_{k=1,2,...,n_{T}}\big(\Delta t_{k-1}\big)^{2q}\big\vert c_{1}Z_{t_{k}-}^{-m}+u(Z_{t_{k}-})\big\vert^{2q}\big]\nonumber\\
	&\leq&C\mathbb{E}\big[\sup_{k=1,2,...,n_{T}}\big(\Delta t_{k-1}\big)^{2q} Z_{t_{k}-}^{-2mq}\big]+C\mathbb{E}\big[\sup_{k=1,2,...,n_{T}}\big(\Delta t_{k-1}\big)^{2q}\big\vert u(Z_{t_{k}-})\big\vert^{2q}\big].\nonumber
\end{eqnarray}
Consequently,
\begin{eqnarray}\label{a3.61}
&&\mathbb{E}\big[\sup_{k=1,2,...,n_{T}}\big(\Delta t_{k-1}\big)^{2q}\bar{Z}_{t_{k}-}^{-2mq}\big]\nonumber\\
&\leq& C\mathbb{E}\big[\sup_{k=1,2,...,n_{T}}\big(\Delta t_{k-1}\big)^{2q}Z_{t_{k}-}^{-2mq}\big]+C\mathbb{E}\big[\sup_{k=1,2,...,n_{T}}\big(\Delta t_{k-1}\big)^{2q}\big\vert u(Z_{t_{k}-})\big\vert^{2q}\big]\nonumber\\
&&+C(\Delta t)^{2q}+C\mathbb{E}\big[\sup_{k=1,2,...,n_{T}}\big(\Delta t_{k-1}\big)^{2q}\big\vert u(\bar{Z}_{t_{k}-})\big\vert^{2q}\big].
\end{eqnarray}
We now estimate the two terms on the right-hand side of \eqref{a3.61}. It is easy to show
\begin{eqnarray}\label{3.61}
\!\!&&\!\!\mathbb{E}\big[\sup_{k=1,2,...,n_{T}}\big(\Delta t_{k-1}\big)^{2q}\big\vert u(Z_{t_{k}-})\big\vert^{2q}\big]\nonumber\\
\!\!&\leq&\!\!C\mathbb{E}\big[\sup_{k=1,2,...,n_{T}}\big(\Delta t_{k-1}\big)^{2q}(1+Z_{t_{k}-}^{\frac{\rho+1}{\rho-1}})^{2q}\big]\nonumber\\
\!\!&\leq&\!\!C(\Delta t)^{2q}+C\mathbb{E}\big[\sup_{k=1,2,...,n_{T}}\big(\Delta t_{k-1}\big)^{2q} Z_{t_{k}-}^{\frac{2q(\rho+1)}{\rho-1}}\big].
\end{eqnarray}
Similarly, we have
\begin{eqnarray}
&&\mathbb{E}\big[\sup_{k=1,2,...,n_{T}}\big(\Delta t_{k-1}\big)^{2q}\big\vert u(\bar{Z}_{t_{k}-})\big\vert^{2q}\big]\nonumber\\
&\leq&C(\Delta t)^{2q}+C\mathbb{E}\Big[\sup_{k=1,2,...,n_{T}}\big(\Delta t_{k-1}\big)^{2q}\bar{Z}_{t_{k}-}^{\frac{2q(\rho+1)}{\rho-1}}\Big].
\end{eqnarray}
We observe from \eqref{b17} that 
\begin{eqnarray}\label{0.34}
	\mathbb{E}\Big[\sup_{k=1,2,...,n_{T}}Z_{t_{k}-}^{\frac{2q(\rho+1)}{\rho-1}}\Big]<\infty.
\end{eqnarray}
Then combining the result above and $\eqref{3.28}$ leads to
\begin{eqnarray}\label{0.35}
&&\mathbb{E}\Big[\sup_{k=1,2,...,n_{T}} \bar{Z}_{t_{k}-}^{\frac{2q(\rho+1)}{\rho-1}}\Big]\\
&=&\mathbb{E}\Big[\sup_{k=1,2,...,n_{T}}\big(\bar{Z}_{t_{k}-}-Z_{t_{k}-}+Z_{t_{k}-}\big)^{\frac{2q(\rho+1)}{\rho-1}}\Big]\nonumber\\
&\leq&C\mathbb{E}\Big[\sup_{k=1,2,...,n_{T}}\big\vert \bar{Z}_{t_{k}-}-Z_{t_{k}-}\big\vert^{\frac{2q(\rho+1)}{\rho-1}}\Big]+C\mathbb{E}\Big[\sup_{k=1,2,...,n_{T}} Z_{t_{k}-}^{\frac{2q(\rho+1)}{\rho-1}}\Big]<\infty.\nonumber
\end{eqnarray}
Consequently $\eqref{a3.61}$-$\eqref{0.35}$ yield
\begin{eqnarray}\label{0.36}
&&\mathbb{E}\big[\sup_{k=1,2,...,n_{T}}\big(\Delta t_{k-1}\big)^{2q}\bar{Z}_{t_{k}-}^{-2mq}\big]\nonumber\\
&\leq& C(\Delta t)^{2q}+C\mathbb{E}\big[\sup_{k=1,2,...,n_{T}}\big(\Delta t_{k-1}\big)^{2q} Z_{t_{k}-}^{-2mq}\big].
\end{eqnarray}
 By $\eqref{3.3}$, we have
\begin{eqnarray}
Z_{t_{k}}=Z_{t_{k}-}+\big[\big(Z_{t_{k}-}^{\frac{1}{1-\rho}}+h(Z_{t_{k}-}^{\frac{1}{1-\rho}})\big)^{1-\rho}-Z_{t_{k}-}\big]\Delta N_{k-1}.
\end{eqnarray}
Under Assumption $\ref{ass2.2}$, that is, there exists a constant $r>0$ such that $x+h(x)\geq rx$ for all $x>0$, we deduce
\begin{eqnarray}
	Z_{t_{k}}\leq Z_{t_{k}-}+(r^{1-\rho}-1)\Delta N_{k-1}Z_{t_{k}-}=\big(1+(r^{1-\rho}-1)\Delta N_{k-1}\big)Z_{t_{k}-}.
\end{eqnarray}
Let $\upsilon_{k}:=1+(r^{1-\rho}-1)\Delta N_{k-1}$. Then
\begin{equation}
\upsilon_{k}=\left\{	
\begin{aligned}
	1,& &\Delta N_{k-1}=0,\\
	r^{1-\rho},& &\Delta N_{k-1}=1,
\end{aligned}
\right.
\end{equation}
which yields $\vert Z_{t_{k}-}\vert^{-2mq}\leq \big\vert \frac{1}{\upsilon_{k}}Z_{t_{k}}\big\vert^{-2mq}$. In light of $\eqref{0.36}$, we have the estimate
\begin{eqnarray}\label{0.39}
&&\mathbb{E}\big[\sup_{k=1,2,...,n_{T}}\big(\Delta t_{k-1}\big)^{2q} \bar{Z}_{t_{k}-}^{-2mq}\big]\nonumber\\
&\leq& C(\Delta t)^{2q}+C\mathbb{E}\big[\sup_{k=1,2,...,n_{T}}\big(\Delta t_{k-1}\big)^{2q} Z_{t_{k}}^{-2mq}\big].
\end{eqnarray}
According to the numerical scheme $\eqref{3.4}$,
\begin{eqnarray}\label{z48}
	\bar{Z}_{t_{k}}=\bar{Z}_{t_{k}-}+\big[\big(\bar{Z}_{t_{k}-}^{\frac{1}{1-\rho}}+h(\bar{Z}_{t_{k}-}^{\frac{1}{1-\rho}})\big)^{1-\rho}-\bar{Z}_{t_{k}-}\big]\Delta N_{k-1}.
\end{eqnarray}
In view of Assumption $\ref{ass2.2}$, that is, there exists a constant $\mu>0$ such that $\vert h'(x)\vert\leq \mu$ for all $x>0$, we thereby obtain that 
\begin{eqnarray}
	\vert h(x)\vert\leq C(1+\vert x\vert),
\end{eqnarray}
where constant $C>0$ depends on $\mu$. Consequently, we conclude from \eqref{z48} that
\begin{eqnarray}
\bar{Z}_{t_{k}}\geq \bar{Z}_{t_{k}-}+\big[\big(C+(C+1)\bar{Z}_{t_{k}-}^{\frac{1}{1-\rho}}\big)^{1-\rho}-\bar{Z}_{t_{k}-}\big]\Delta N_{k-1}.
\end{eqnarray}
Since $\big(C+(C+1)\bar{Z}_{t_{k}-}^{\frac{1}{1-\rho}}\big)^{1-\rho}<(\bar{Z}_{t_{k}-}^{\frac{1}{1-\rho}})^{1-\rho}=\bar{Z}_{t_{k}-}$, we have $\bar{Z}_{t_{k}}\geq \big(C+(C+1)\bar{Z}_{t_{k}-}^{\frac{1}{1-\rho}}\big)^{1-\rho}$ and then deduce
\begin{eqnarray}\label{62}
\!\!\bar{Z}_{t_{k}}^{-2mq}\!\leq\! \big\vert \big(C+(C+1)\bar{Z}_{t_{k}-}^{\frac{1}{1-\rho}}\big)^{1-\rho}\big\vert^{-2mq}\!=\!\big\vert C+(C+1)\bar{Z}_{t_{k}-}^{\frac{1}{1-\rho}}\big\vert^{2mq(\rho-1)}.
\end{eqnarray}
Combining \eqref{62} and $\eqref{0.39}$ yields
\begin{eqnarray}
&&\mathbb{E}\big[\sup_{k=1,2,...,n_{T}}\big(\Delta t_{k-1}\big)^{2q}\bar{Z}_{t_{k}}^{-2mq}\big]\nonumber\\
&\leq&C(\Delta t)^{2q}+C\mathbb{E}\big[\sup_{k=1,2,...,n_{T}}\big(\Delta t_{k-1}\big)^{2q} \bar{Z}_{t_{k}-}^{-2mq}\big]\nonumber\\
&\leq& C(\Delta t)^{2q}+C\mathbb{E}\big[\sup_{k=1,2,...,n_{T}}\big(\Delta t_{k-1}\big)^{2q}Z_{t_{k}}^{-2mq}\big]\nonumber.\end{eqnarray}
The proof is completed.
\end{proof}

Let $\mathbf{1}_{G}$ be the indicator function of $G$. According to Lemma \ref{lem3.7} and the standard inequality for the lower tail of the normal distribution, one can prove the following lemma.

\begin{lemma}\label{lem3.9}
Let  Assumptions $\ref{ass2.2}$, $\ref{ass3.3}$ hold and let $\gamma>2\rho-1, \varepsilon\in\big(0,\frac{2(\gamma+1-2\rho)}{3\rho(\gamma-1)}\big)$. $m$ is a constant defined in Lemma \ref{lem3.7}. Assume that $\Delta t$ is sufficiently small such that 
\begin{footnotesize}
\begin{eqnarray}\label{0.65}
  (\Delta t)^{\frac{m\!-\!1}{2m}+\varepsilon}\!\leq\! \frac{\alpha_{2}^{1/m}}{2(\rho\!-\!1)\alpha_{3}^{(m+1)/m}}
\end{eqnarray}
\end{footnotesize}
and 
\begin{footnotesize}
\begin{eqnarray}\label{0.66}
	\!\!\!\!\!\!\!\!(\Delta t)^{\varepsilon}\!\!&<&\!\!\Big(\frac{\alpha_{2}(\rho\!-\!1)}{2(\rho\!-\!1)(\alpha_{-1}\!+\!\alpha_{1})\!+\!2Q_{\gamma,\rho})}\Big)^{\frac{1}{m+1}}\wedge
	\frac{1}{2\!+\!4(\rho\!-\!1)(\alpha_{-1}\!+\!\alpha_{1})\!+\!4Q_{\gamma,\rho}}.
\end{eqnarray}
\end{footnotesize}
 Then for any $q\geq1$, we have
\begin{eqnarray}
\Big\|\sup_{k=1,2,...,n_{T}}\mathbb{E}^{N}\big[\bar{Z}_{t_{k}-}^{-2mq}\mathbf{1}_{\{\bar{Z}_{t_{k}-}\leq(\Delta t)^{\varepsilon}\bar{Z}_{t_{k-1}}\}}\big]\Big\|_{L_{2}(\Omega, \mathbb{R})}<\infty.
\end{eqnarray}
\end{lemma}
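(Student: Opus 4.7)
The plan is to estimate $\mathbb{E}^{N}[\bar{Z}_{t_{k}-}^{-2mq}\mathbf{1}_{A_{k}}]$, where $A_{k}:=\{\bar{Z}_{t_{k}-}\leq(\Delta t)^{\varepsilon}\bar{Z}_{t_{k-1}}\}$, by combining the implicit backward Euler identity with a Gaussian lower-tail estimate for $\Delta W_{k-1}$ and then invoking Lemma \ref{lem3.7}. Starting from
\[
\bar{Z}_{t_{k}-}=\bar{Z}_{t_{k-1}}+F_{\gamma,\rho}(\bar{Z}_{t_{k}-})\Delta t_{k-1}+(1-\rho)\alpha_{3}\Delta W_{k-1},
\]
I would first exploit the smallness conditions \eqref{0.65}--\eqref{0.66} to show that on $A_{k}$ the implicit value $\bar{Z}_{t_{k}-}$ is small enough for the singular term of $F_{\gamma,\rho}$ to dominate, yielding $F_{\gamma,\rho}(\bar{Z}_{t_{k}-})\geq\tfrac{1}{2}(\rho-1)\alpha_{2}\bar{Z}_{t_{k}-}^{-m}>0$. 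Rearranging the scheme then produces two inclusions on $A_{k}$ (using $(\Delta t)^{\varepsilon}\leq 1/2$ for $\Delta t$ small):
\[
A_{k}\subseteq\{\Delta W_{k-1}\geq c\bar{Z}_{t_{k-1}}\}\quad\text{and}\quad \bar{Z}_{t_{k}-}^{-m}\leq \tfrac{2\alpha_{3}}{\alpha_{2}}\cdot\tfrac{\Delta W_{k-1}}{\Delta t_{k-1}},
\]
with $c:=(2(\rho-1)\alpha_{3})^{-1}>0$, so that $\bar{Z}_{t_{k}-}^{-2mq}\mathbf{1}_{A_{k}}\leq C(\Delta W_{k-1}/\Delta t_{k-1})^{2q}\mathbf{1}_{\{\Delta W_{k-1}\geq c\bar{Z}_{t_{k-1}}\}}$.

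Next I would condition on $\mathcal{F}_{t_{k-1}}\vee\mathcal{F}_{T}^{N}$. Under this $\sigma$-algebra, $\Delta W_{k-1}\sim\mathcal{N}(0,\Delta t_{k-1})$ is independent of the measurable $\bar{Z}_{t_{k-1}}$, and the elementary Gaussian estimate $\mathbb{E}[|Z|^{2q}\mathbf{1}_{\{Z\geq u\}}]\leq C_{q}e^{-u^{2}/8}$ for $Z\sim\mathcal{N}(0,1)$ and $u\geq 0$ (obtained from $\mathbf{1}_{\{Z\geq u\}}\leq e^{(Z^{2}-u^{2})/8}$) yields
\[
\mathbb{E}\bigl[\bar{Z}_{t_{k}-}^{-2mq}\mathbf{1}_{A_{k}}\,\big|\,\mathcal{F}_{t_{k-1}}\vee\mathcal{F}_{T}^{N}\bigr]\leq \frac{C_{q}}{(\Delta t_{k-1})^{q}}\exp\!\Bigl(\!-\frac{c^{2}\bar{Z}_{t_{k-1}}^{2}}{8\Delta t_{k-1}}\Bigr).
\]
Applying $e^{-a/x}\leq n!\,x^{n}/a^{n}$ for an integer $n\geq 1$ upgrades the right-hand side to $C_{q,n}(\Delta t_{k-1})^{n-q}\bar{Z}_{t_{k-1}}^{-2n}$.

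To finish, I would pick $n$ so that the weight/power pair $(n-q,2n)$ matches $(2q',2mq')$ in Lemma \ref{lem3.7}, namely $q'=q/(m-2)$ in the range $m>2$; complementary parameter ranges (including $m\leq 2$ and small $q$) are reduced via the monotonicity $\bar{Z}^{-2mq_{1}}\leq 1+\bar{Z}^{-2mq_{2}}$ for $q_{1}\leq q_{2}$ to a favorable $q$. Taking the supremum over $k$, the tower property against $\mathcal{F}_{T}^{N}$, and finally the $L^{2}(\Omega)$-norm, the claim reduces to the finiteness of $\mathbb{E}[\sup_{k}(\Delta t_{k-1})^{2q'}\bar{Z}_{t_{k-1}}^{-2mq'}]$, which is exactly what Lemma \ref{lem3.7} provides (with Corollary \ref{cor3.2} supplying the corresponding finite inverse moments of the exact solution). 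The principal obstacle will be the first step: establishing quantitatively, using only \eqref{0.65}--\eqref{0.66}, that $F_{\gamma,\rho}(\bar{Z}_{t_{k}-})\geq\tfrac{1}{2}(\rho-1)\alpha_{2}\bar{Z}_{t_{k}-}^{-m}$ holds on $A_{k}$; this requires dominating every subleading term of $F_{\gamma,\rho}$ (notably the $y^{-1}$ and $y^{(\rho+1)/(\rho-1)}$ contributions) against the singular term uniformly in the random $\bar{Z}_{t_{k-1}}$. The bookkeeping in the final parameter-matching step with Lemma \ref{lem3.7} is a secondary but nontrivial subtlety.
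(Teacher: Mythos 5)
Your outline shares the paper's starting ingredients (the implicit Euler identity, a Gaussian lower-tail bound conditional on $\sigma(\mathcal{F}^{N}_{T}\cup\mathcal{F}^{W}_{t_{k-1}})$, and the weighted inverse moments of Lemma \ref{lem3.7}), but two steps do not go through. First, the bound $F_{\gamma,\rho}(\bar{Z}_{t_{k}-})\geq\tfrac12(\rho-1)\alpha_{2}\bar{Z}_{t_{k}-}^{-m}$ on $A_{k}$ is false in general: the event $\bar{Z}_{t_{k}-}\leq(\Delta t)^{\varepsilon}\bar{Z}_{t_{k-1}}$ does not make $\bar{Z}_{t_{k}-}$ small when $\bar{Z}_{t_{k-1}}$ is large, and then the negative terms $-(\rho-1)\alpha_{-1}y^{\frac{\rho+1}{\rho-1}}-(\rho-1)\alpha_{1}y$ of $F_{\gamma,\rho}$ dominate, so no smallness condition on $\Delta t$ alone can rescue the inequality. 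You flag this as ``the principal obstacle,'' but it is not a technicality: the paper resolves it by using the one-sided Lipschitz bound $F_{\gamma,\rho}(\bar Z_{t_k-})\geq F_{\gamma,\rho}((\Delta t)^{\varepsilon}\bar Z_{t_{k-1}})-Q_{\gamma,\rho}((\Delta t)^{\varepsilon}\bar Z_{t_{k-1}}-\bar Z_{t_k-})$ so that everything is expressed through $\bar Z_{t_{k-1}}$, and by splitting off the event $\{(\Delta t_{k-1})^{\varepsilon}\bar Z_{t_{k-1}}^{2/(\rho-1)}>1\}$, on which $(\Delta t_{k-1})^{-4q}$ is absorbed into a \emph{positive} moment $\bar Z_{t_{k-1}}^{8q/((\rho-1)\varepsilon)}$. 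Some device of this kind is indispensable.

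Second, the concluding reduction to Lemma \ref{lem3.7} fails. Converting $e^{-c^{2}\bar Z_{t_{k-1}}^{2}/(8\Delta t_{k-1})}$ into $C(\Delta t_{k-1})^{n}\bar Z_{t_{k-1}}^{-2n}$ produces the pair $(\Delta t_{k-1})^{n-q}\bar Z_{t_{k-1}}^{-2n}$, i.e.\ the value at $t_{k-1}$ weighted by the length of the step that \emph{follows} $t_{k-1}$; Lemma \ref{lem3.7} and its intermediate forms \eqref{0.36}, \eqref{0.39} only control the value at a node weighted by the step that \emph{precedes} it ($(\Delta t_{k-2})^{2q'}\bar Z_{t_{k-1}}^{-2mq'}$ after reindexing), and on a jump-adapted mesh consecutive step lengths are incomparable (a jump just after $t_{k-2}$ makes $\Delta t_{k-2}$ tiny while $\Delta t_{k-1}\approx\Delta t$). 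Unweighted inverse moments of the scheme are not available here -- establishing them at $T$ is precisely the purpose of Lemma \ref{lem3.10}, which depends on Lemma \ref{lem3.9}, so invoking them would be circular. Moreover, even granting the pairing, your exponent matching $q'=q/(m-2)$ requires $m>2$, whereas $\gamma>2\rho-1$ only gives $m>1$, and raising $q$ does not change $m$. The paper sidesteps all of this by Cauchy--Schwarz, keeping the correctly paired factor $\mathbb{E}^{N}[\bar Z_{t_k-}^{-4mq}(\Delta t_{k-1}/\Delta t)^{4q}]$ intact (controlled by \eqref{c83}), and by retaining the exponential: the Gaussian exponent contains the cross term $\exp\big(-c\,\alpha_{2}(\Delta t)^{-m\varepsilon}\bar Z_{t_{k-1}}^{1-m}\big)$, which supplies super-polynomial decay in $\Delta t_{k-1}$ exactly when $\bar Z_{t_{k-1}}$ is small and thus beats $(\Delta t_{k-1})^{-4q}$ without ever invoking an inverse moment of $\bar Z_{t_{k-1}}$. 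Your step $e^{-a/x}\leq n!\,x^{n}/a^{n}$ discards this decay, which is the essential mechanism of the proof.
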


A proof is given in the Appendix. Based on the conclusions above, we will show the boundedness of inverse moments of the JABEM at time $T$.

\begin{lemma}\label{lem3.10}
 Let the conditions of Lemma \ref{lem3.9} hold and let  $q\geq1, \varepsilon\in\big(0, \frac{1}{8mq}\wedge \frac{2(\gamma+1-2\rho)}{3\rho(\gamma-1)}\big)$.
 Then we have
\begin{eqnarray}\label{76}
	\mathbb{E}\big[\bar{Z}_{T-}^{-2mq}\big]<\infty.
\end{eqnarray}	
\end{lemma}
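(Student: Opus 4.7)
\emph{Proof plan.} The plan is to iterate the pointwise dichotomy exploited in Lemma~\ref{lem3.9} backwards through all mesh nodes of $\mathcal{T}$, converting the unconditional inverse moment $\mathbb{E}[\bar{Z}_{T-}^{-2mq}]$ into the sum of the $L^{2}(\Omega)$-bounded quantity from Lemma~\ref{lem3.9} and an $\mathcal{F}^{N}_{T}$-measurable prefactor that can be controlled by Poissonian exponential moments.

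First I would introduce the events $A_{k}:=\{\bar{Z}_{t_{k}-}\leq(\Delta t)^{\varepsilon}\bar{Z}_{t_{k-1}}\}$ and, for each $k$, split
\[
\bar{Z}_{t_{k}-}^{-2mq} \leq \bar{Z}_{t_{k}-}^{-2mq}\mathbf{1}_{A_{k}} + (\Delta t)^{-2mq\varepsilon}\bar{Z}_{t_{k-1}}^{-2mq},
\]
so that the first term is precisely the quantity estimated in Lemma~\ref{lem3.9} while the second pushes the analysis to node $k-1$. To propagate through jump nodes, I would note that at a non-jump node $\bar{Z}_{t_{k-1}}=\bar{Z}_{t_{k-1}-}$, whereas at a jump node $\bar{Z}_{t_{k-1}}=g(\bar{Z}_{t_{k-1}-})$ with $g(y):=\bigl(y^{1/(1-\rho)}+h(y^{1/(1-\rho)})\bigr)^{1-\rho}$. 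A chain-rule computation gives $g'(y)=(1+h(x)/x)^{-\rho}(1+h'(x))|_{x=y^{1/(1-\rho)}}\in[\mu_{1},\mu_{2}]$ by Assumption~\ref{ass3.3}, and $g(0^{+})=0$ because $x=y^{1/(1-\rho)}\to\infty$ as $y\to 0^{+}$ and $x+h(x)\geq rx\to\infty$ there. Therefore $g(y)\geq\mu_{1}y$, which yields the uniform bound $\bar{Z}_{t_{k-1}}^{-2mq}\leq\mu_{1}^{-2mq}\bar{Z}_{t_{k-1}-}^{-2mq}$ at jump nodes (and trivially at non-jump nodes).

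Combining the two estimates and iterating from $k=n_{T}$ down to $k=1$ produces the pointwise inequality
\[
\bar{Z}_{T-}^{-2mq} \leq \Lambda\Bigl(\bar{Z}_{0}^{-2mq} + \sum_{k=1}^{n_{T}}\bar{Z}_{t_{k}-}^{-2mq}\mathbf{1}_{A_{k}}\Bigr),
\]
where $\Lambda\leq C_{0}^{N(T)}(\Delta t)^{-2mq\varepsilon n_{T}}$ with $C_{0}:=\mu_{1}^{-2mq}\vee 1$ is $\mathcal{F}^{N}_{T}$-measurable. Taking $\mathbb{E}^{N}[\cdot]$ (which commutes with $\Lambda$ and $n_{T}$), bounding the sum by $n_{T}\sup_{k}\mathbb{E}^{N}[\bar{Z}_{t_{k}-}^{-2mq}\mathbf{1}_{A_{k}}]$, then taking the full expectation with Cauchy--Schwarz, I would obtain
\[
\mathbb{E}[\bar{Z}_{T-}^{-2mq}] \leq \bar{Z}_{0}^{-2mq}\mathbb{E}[\Lambda] + \|\Lambda n_{T}\|_{L^{2}(\Omega)}\Bigl\|\sup_{k}\mathbb{E}^{N}[\bar{Z}_{t_{k}-}^{-2mq}\mathbf{1}_{A_{k}}]\Bigr\|_{L^{2}(\Omega)}.
\]
The last factor is finite by Lemma~\ref{lem3.9}. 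The remaining quantities are finite because $n_{T}\leq M+N(T)+1$ a.s.\ and, for any $a>0$, $\mathbb{E}[a^{N(T)}]=e^{\lambda T(a-1)}<\infty$, so $\Lambda$ and $\Lambda n_{T}$ possess moments of every order for fixed $\Delta t$.

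The main obstacle is the bookkeeping during the backward iteration: one must confirm that each surviving indicator $\mathbf{1}_{A_{k}}$ is exactly the event bounded in Lemma~\ref{lem3.9}, and that the jump factor $\mu_{1}^{-2mq}$ is collected only at genuine Poisson nodes, so the accumulated prefactor is dominated by $C_{0}^{N(T)}$ rather than the much larger $C_{0}^{n_{T}}$. Once these matchings are verified, Poissonian exponential moments close the argument and yield $\mathbb{E}[\bar{Z}_{T-}^{-2mq}]<\infty$.
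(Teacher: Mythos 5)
Your iteration does yield $\mathbb{E}[\bar{Z}_{T-}^{-2mq}]<\infty$ for each \emph{fixed} $\Delta t$, and the pointwise ingredients (the dichotomy on $A_k$, the bound $g(y)\geq\mu_1 y$ at jump nodes, the Poissonian exponential moments of $C_0^{N(T)}$) are all sound. The genuine gap is quantitative: because you iterate backwards through \emph{all} $n_T$ nodes, your prefactor contains $(\Delta t)^{-2mq\varepsilon n_T}\geq(\Delta t)^{-2mq\varepsilon T/\Delta t}$, which explodes super-exponentially as $\Delta t\to 0$. The paper's convention is that $C$ is independent of the discretization parameters, and Lemma \ref{lem3.10} is fed into \eqref{0.47}--\eqref{0.45} to produce the rate $C(\Delta t)^p$ in Theorem \ref{thm4.1}; for that, the inverse-moment bound must be uniform in $\Delta t$. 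Your bound is not, so the argument cannot close the main theorem.

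The paper's proof is built precisely to avoid this accumulation. It never leaves the terminal window: it iterates backwards only over the nodes lying in $(T-\Delta t,T)$, stopping at the first index $k_0$ with $\Delta t_{k_0-1}>l_0\Delta t$. The termination is forced by an auxiliary dichotomy on the step sizes, $\Delta t_{n_T-i}<l_0(\Delta t)^{1+\frac{i-1}{8q}}$ versus the complement; on the complementary ("large step") branch the term is absorbed into $\mathbb{E}^{N}\big[\bar{Z}_{t_k-}^{-2mq}(\Delta t_{k-1}/\Delta t)^{2q}\big]$, which is controlled uniformly via Lemma \ref{lem3.7} and \eqref{c83} --- a device your plan has no analogue of. Since the number of iterations is then at most $N_{\Delta t}$, the number of Poisson jumps in an interval of length $\Delta t$, the accumulated factor $\big(C^q(\Delta t)^{-2mq\varepsilon}\big)^{N_{\Delta t}}$ has expectation $\sum_j (C^q)^j(\Delta t)^{-2mq\varepsilon j}\frac{(\lambda\Delta t)^j}{j!}e^{-\lambda\Delta t}$, in which the tiny probability $(\lambda\Delta t)^j/j!$ of seeing $j$ jumps in a $\Delta t$-window cancels the $(\Delta t)^{-2mq\varepsilon j}$ blow-up (this is exactly where $\varepsilon<\frac{1}{8mq}$ is used, cf.\ \eqref{d62}). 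To repair your proof you would need to replace the global backward sweep by this localized one, together with the step-size splitting and the weighted inverse-moment bound of Lemma \ref{lem3.7}.
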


\begin{proof}
For any $q\geq 1$, let $\varepsilon\in\big(0,\frac{1}{8mq}\wedge\frac{2(\gamma+1-2\rho)}{3\rho(\gamma-1)}\big)$ and let constant $l_{0}\in(0,1/3]$ be independent of $\Delta t$. By \eqref{62}, we deduce 
\begin{eqnarray}\label{a82}
	\!\!&&\!\!\mathbb{E}^{N}\big[\bar{Z}_{T-}^{-2mq}\mathbf{1}_{\{\Delta t_{n_{T}-1}<l_{0}\Delta t, \bar{Z}_{T-}>(\Delta t)^{\varepsilon}\bar{Z}_{t_{n_{T}-1}}\}}\big]\nonumber\\
	\!\!&\leq&\!\!(\Delta t)^{-2mq\varepsilon} \mathbb{E}^{N}\big[\bar{Z}_{t_{n_{T}-1}}^{-2mq}\mathbf{1}_{\{\Delta t_{n_{T}-1}<l_{0}\Delta t\}}\big]\\
	\!\!&\leq&\!\! C^{q}(\Delta t)^{-2mq\varepsilon}\mathbf{1}_{\{\Delta t_{n_{T}\!-\!1}<l_{0}\Delta t\}}\!+\!C^{q}(\Delta t)^{-2mq\varepsilon}\mathbb{E}^{N}\big[\bar{Z}_{t_{n_{T}\!-\!1}-}^{-2mq}\mathbf{1}_{\{\Delta t_{n_{T}\!-\!1}<l_{0}\Delta t\}}\big].\nonumber
\end{eqnarray}  
Hence
\begin{eqnarray}\label{83}
\!\!&&\!\!\mathbb{E}^{N}\big[\bar{Z}_{T-}^{-2mq}\big]\nonumber\\
\!\!&=&\!\!\mathbb{E}^{N}\big[\bar{Z}_{T-}^{-2mq}\mathbf{1}_{\{\Delta t_{n_{T}-1}\geq l_{0}\Delta t\}}\big]+\mathbb{E}^{N}\big[\bar{Z}_{T-}^{-2mq}\mathbf{1}_{\{\Delta t_{n_{T}-1}<l_{0}\Delta t, \bar{Z}_{T-}\leq(\Delta t)^{\varepsilon}\bar{Z}_{t_{n_{T}-1}}\}}\big]\nonumber\\
\!\!&&\!\!+\mathbb{E}^{N}\big[\bar{Z}_{T-}^{-2mq}\mathbf{1}_{\{\Delta t_{n_{T}-1}<l_{0}\Delta t, \bar{Z}_{T-}>(\Delta t)^{\varepsilon}\bar{Z}_{t_{n_{T}-1}}\}}\big]\\
\!\!&\leq&\!\!\mathbb{E}^{N}\big[\bar{Z}_{T-}^{-2mq}\mathbf{1}_{\{\Delta t_{n_{T}-1}\geq l_{0}\Delta t\}}\big]+\mathbb{E}^{N}\big[\bar{Z}_{T-}^{-2mq}\mathbf{1}_{\{\Delta t_{n_{T}-1}<l_{0}\Delta t, \bar{Z}_{T-}\leq(\Delta t)^{\varepsilon}\bar{Z}_{t_{n_{T}-1}}\}}\big] \nonumber\\
\!\!&&\!\!+C^{q}(\Delta t)^{-2mq\varepsilon}\mathbf{1}_{\{\Delta t_{n_{T}\!-\!1}<l_{0}\Delta t\}}+C^{q}(\Delta t)^{-2mq\varepsilon}\mathbb{E}^{N}\big[\bar{Z}_{t_{n_{T}\!-\!1}-}^{-2mq}\mathbf{1}_{\{\Delta t_{n_{T}\!-\!1}<l_{0}\Delta t\}}\big].\nonumber
\end{eqnarray}
Similarly, the last term on the right-hand side of the inequality in \eqref{83} satisfies
\begin{eqnarray}
	\!\!&&\!\!\mathbb{E}^{N}\big[\bar{Z}_{t_{n_{T}-1}-}^{-2mq}\mathbf{1}_{\{\Delta t_{n_{T}-1}<l_{0}\Delta t\}}\big]\nonumber\\
	\!\!&=&\!\!\mathbb{E}^{N}\big[\bar{Z}_{t_{n_{T}-1}-}^{-2mq}\mathbf{1}_{\{\Delta t_{n_{T}-1}<l_{0}\Delta t,\Delta t_{n_{T}-2}\geq l_{0}(\Delta t)^{1\!+\!\frac{1}{8q}}\}}\big]\nonumber\\
	\!\!&&\!\!+\mathbb{E}^{N}\big[\bar{Z}_{t_{n_{T}-1}-}^{-2mq}\mathbf{1}_{\{\Delta t_{n_{T}-1}<l_{0}\Delta t, \Delta t_{n_{T}-2}<l_{0}(\Delta t)^{1+\frac{1}{8q}},\bar{Z}_{t_{n_{T}-1}-}\leq(\Delta t)^{\varepsilon}\bar{Z}_{t_{n_{T}-2}}\}}\big]\nonumber\\
	\!\!&&\!\!+\mathbb{E}^{N}\big[\bar{Z}_{t_{n_{T}-1}-}^{-2mq}\mathbf{1}_{\{\Delta t_{n_{T}-1}<l_{0}\Delta t, \Delta t_{n_{T}-2}<l_{0}(\Delta t)^{1+\frac{1}{8q}},\bar{Z}_{t_{n_{T}-1}-}>(\Delta t)^{\varepsilon}\bar{Z}_{t_{n_{T}-2}}\}}\big]\nonumber\\
	\!\!&\leq&\!\!\mathbb{E}^{N}\big[\bar{Z}_{t_{n_{T}-1}-}^{-2mq}\mathbf{1}_{\{\Delta t_{n_{T}-1}<l_{0}\Delta t,\Delta t_{n_{T}-2}\geq l_{0}(\Delta t)^{1\!+\!\frac{1}{8q}}\}}\big]\nonumber\\
	\!\!&&\!\!+\mathbb{E}^{N}\big[\bar{Z}_{t_{n_{T}-1}-}^{-2mq}\mathbf{1}_{\{\Delta t_{n_{T}-1}<l_{0}\Delta t, \Delta t_{n_{T}-2}<l_{0}(\Delta t)^{1+\frac{1}{8q}},\bar{Z}_{t_{n_{T}-1}-}\leq(\Delta t)^{\varepsilon}\bar{Z}_{t_{n_{T}-2}}\}}\big]\nonumber\\
	\!\!&&\!\!+C^{q}(\Delta t)^{-2mq\varepsilon}\mathbf{1}_{\{\Delta t_{n_{T}-1}<l_{0}\Delta t, \Delta t_{n_{T}-2}<l_{0}(\Delta t)^{1+\frac{1}{8q}}\}}\nonumber\\
	\!\!&&\!\!+C^{q}(\Delta t)^{-2mq\varepsilon}\mathbb{E}^{N}\big[\bar{Z}_{t_{n_{T}-2}-}^{-2mq}\mathbf{1}_{\{\Delta t_{n_{T}-1}<l_{0}\Delta t, \Delta t_{n_{T}-2}<l_{0}(\Delta t)^{1+\frac{1}{8q}}\}}\big]\nonumber.
\end{eqnarray}
Based on an inductive argument, we show that for any $i\in\{1,...,n_{T}-2\}$, 
\begin{eqnarray}\label{84}
	\!\!&&\!\!\mathbb{E}^{N}\big[\bar{Z}_{t_{n_{T}-i}-}^{-2mq}\mathbf{1}_{\{\Delta t_{n_{T}-1}<l_{0}\Delta t,...,\Delta t_{n_{T}-i}<l_{0}(\Delta t)^{1+\frac{i-1}{8q}}\}}\big]\nonumber\\
	\!\!&\leq&\!\!\mathbb{E}^{N}\big[\bar{Z}_{t_{n_{T}-i}-}^{-2mq}\mathbf{1}_{\{\Delta t_{n_{T}-1}<l_{0}\Delta t,...,\Delta t_{n_{T}-i}<l_{0}(\Delta t)^{1+\frac{i-1}{8q}}, \Delta t_{n_{T}-i-1}\geq l_{0}(\Delta t)^{1+\frac{i}{8q}}\}}\big]\nonumber\\
	\!\!&&\!\!+\mathbb{E}^{N}\big[\bar{Z}_{t_{n_{T}\!-\!i}-}^{-2mq}\mathbf{1}_{\{\Delta t_{n_{T}\!-\!1}<l_{0}\Delta t,...,\Delta t_{n_{T}\!-\!i\!-\!1}< l_{0}(\Delta t)^{1+\frac{i}{8q}},\bar{Z}_{t_{n_{T}\!-\!i}-}\leq(\Delta t)^{\varepsilon}\bar{Z}_{t_{n_{T}\!-\!i\!-\!1}}\}}\big]\nonumber\\
	\!\!&&\!\!+C^q(\Delta t)^{-2mq\varepsilon}\mathbf{1}_{\{\Delta t_{n_{T}-1}<l_{0}\Delta t,...,\Delta t_{n_{T}-i-1}< l_{0}(\Delta t)^{1+\frac{i}{8q}}\}}\nonumber\\
	\!\!&&\!\!+C^q(\Delta t)^{-2mq\varepsilon}\mathbb{E}^{N}\big[\bar{Z}_{t_{n_{T}-i-1}-}^{-2mq}\mathbf{1}_{\{\Delta t_{n_{T}-1}<l_{0}\Delta t,...,\Delta t_{n_{T}-i-1}<l_{0}(\Delta t)^{1+\frac{i}{8q}}\}}\big],
\end{eqnarray}
where the constant $C$ does not depend on $i$. Let $k_{0}:=\min\big\{n\in\{0,1,...,n_{T}\}\colon 0<T-\Delta t<t_n<T\big\}$. If $k_{0}\leq n_{T}-2$, inserting \eqref{84} into \eqref{83} and repeating this procedure for $i=1,...,n_{T}\!-\!k_{0}\!-\!1$ yield
\begin{eqnarray}\label{85}
		\!\!&&\!\!\mathbb{E}^{N}\big[\bar{Z}_{T-}^{-2mq}\big]\nonumber\\
		\!\!&\leq&\!\!\mathbb{E}^{N}\big[\bar{Z}_{T-}^{-2mq}\mathbf{1}_{\{\Delta t_{n_{T}\!-\!1}\geq l_{0}\Delta t\}}\big]\!+\!\mathbb{E}^{N}\big[\bar{Z}_{T-}^{-2mq}\mathbf{1}_{\{\Delta t_{n_{T}\!-\!1}<l_{0}\Delta t, \bar{Z}_{T-}\leq(\Delta t)^{\varepsilon}\bar{Z}_{t_{n_{T}\!-\!1}}\}}\big] \nonumber\\
		\!\!&&\!\!+\sum_{i=1}^{n_{T}-k_{0}-1}\big(C^q(\Delta t)^{-2mq\varepsilon}\big)^{i}\mathbb{E}^{N}\Big[\bar{Z}_{t_{n_{T}-i}-}^{-2mq}\nonumber\\
		\!\!&&\!\!\cdot\mathbf{1}_{\{\Delta t_{n_{T}-1}<l_{0}\Delta t,...,\Delta t_{n_{T}-i}<l_{0}(\Delta t)^{1+\frac{i-1}{8q}},\Delta t_{n_{T}-i-1}\geq l_{0}(\Delta t)^{1+\frac{i}{8q}}\}}\Big]\nonumber\\
		\!\!&&\!\!+\sum_{i=1}^{n_{T}-k_{0}-1}\big(C^q(\Delta t)^{-2mq\varepsilon}\big)^{i}\mathbb{E}^{N}\Big[\bar{Z}_{t_{n_{T}-i}-}^{-2mq}\nonumber\\
		\!\!&&\!\!\cdot\mathbf{1}_{\{\Delta t_{n_{T}\!-\!1}<l_{0}\Delta t,...,\Delta t_{n_{T}-i-1}<l_{0}(\Delta t)^{1+\frac{i}{8q}},\bar{Z}_{t_{n_{T}-i}-}\leq(\Delta t)^{\varepsilon}\bar{Z}_{t_{n_{T}-i-1}}\}}\Big]\nonumber\\
		\!\!&&\!\!+\sum_{i=1}^{n_{T}-k_{0}}\big(C^q(\Delta t)^{-2mq\varepsilon}\big)^{i}\mathbf{1}_{\{\Delta t_{n_{T}-1}<l_{0}\Delta t,...,\Delta t_{n_{T}-i}<l_{0}(\Delta t)^{1+\frac{i\!-\!1}{8q}}\}}\\
		\!\!&&\!\!+\big(C^q(\Delta t)^{-2mq\varepsilon}\big)^{n_{T}\!-k_{0}}\mathbb{E}^{N}\Big[\bar{Z}_{t_{k_{0}}-}^{-2mq}\mathbf{1}_{\{\Delta t_{n_{T}\!-\!1}<l_{0}\Delta t,...,\Delta t_{k_{0}}<l_{0}(\Delta t)^{1+\frac{n_{T}\!-\!k_{0}\!-\!1}{8q}}\}}\Big].\nonumber
	\end{eqnarray}
In view of $l_{0}\leq \frac{1}{3}, \varepsilon<\frac{1}{8mq}, m>1$ and \eqref{0.66}, we have $(\Delta t)^{\frac{1}{8q}}<(\Delta t)^{\varepsilon}<\frac{1}{2}$. In addition, if $\Delta t_{n_{T}-1}<l_{0}\Delta t,...,\Delta t_{k_{0}}<l_{0}(\Delta t)^{1+\frac{n_{T}-k_{0}-1}{8q}}$, then we deduce 
$$\sum_{j=1}^{n_{T}-k_{0}}\Delta t_{n_{T}-j}<l_{0}\sum_{j=1}^{n_{T}-k_{0}}(\Delta t)^{1+\frac{j-1}{8q}}<\frac{l_{0}\Delta t}{1-(\Delta t)^{\frac{1}{8q}}}<2l_{0}\Delta t$$
and $\Delta t_{k_{0}-1}=\Delta t-\sum_{j=1}^{n_{T}-k_{0}}\Delta t_{n_{T}-j}> l_{0}\Delta t$. Thus for the last term on the right-hand side of \eqref{85}, we obtain
 \begin{eqnarray}\label{86}
\!\!&&\!\!\mathbb{E}^{N}\Big[\bar{Z}_{t_{k_{0}}-}^{-2mq}\mathbf{1}_{\{\Delta t_{n_{T}-1}<l_{0}\Delta t,...,\Delta t_{k_{0}}<l_{0}(\Delta t)^{1+\frac{n_{T}-k_{0}-1}{8q}}\}}\Big]\nonumber\\
\!\!&\leq&\!\! \mathbb{E}^{N}\Big[\bar{Z}_{t_{k_{0}}-}^{-2mq}\mathbf{1}_{\{\Delta t_{k_{0}-1}=\Delta t-\sum_{j=1}^{n_{T}-k_{0}}\Delta t_{n_{T}-j}> l_{0}\Delta t\}}\Big]\nonumber\\
\!\!&\leq&\!\! l_{0}^{-2q}\mathbb{E}^{N}\Big[\bar{Z}_{t_{k_{0}}-}^{-2mq}\big(\frac{\Delta t_{k_{0}-1}}{\Delta t}\big)^{2q}\Big].
\end{eqnarray}
Inserting \eqref{86} into \eqref{85} leads to
\begin{eqnarray}\label{87}
	\!\!&&\!\!\mathbb{E}^{N}\big[\bar{Z}_{T-}^{-2mq}\big]\nonumber\\
	\!\!&\leq&\!\!\sum_{i=0}^{n_{T}-k_{0}-1}\big(C^q(\Delta t)^{-2mq\varepsilon}\big)^{i}\mathbb{E}^{N}\big[\bar{Z}_{t_{n_{T}-i}-}^{-2mq}\big(\frac{\Delta t_{n_{T}-i-1}}{\Delta t}\big)^{2q}\big]\frac{1}{l_{0}^{2q}(\Delta t)^{\frac{i}{4}}}\nonumber\\
		\!\!&&\!\!+\sum_{i=0}^{n_{T}-k_{0}-1}\big(C^q(\Delta t)^{-2mq\varepsilon}\big)^{i}\mathbb{E}^{N}\Big[\bar{Z}_{t_{n_{T}-i}-}^{-2mq}\mathbf{1}_{\{\bar{Z}_{t_{n_{T}-i}-}\leq(\Delta t)^{\varepsilon}\bar{Z}_{t_{n_{T}-i-1}}\}}\Big]\nonumber\\
		\!\!&&\!\!+\!\!\sum_{i=1}^{n_{T}\!-\!k_{0}}\!\!\!\big(C^q(\Delta t)^{\!-2mq\varepsilon}\big)^{\!i}\!+\!l_{0}^{-2q}\big(C^q(\Delta t)^{\!-2mq\varepsilon}\big)^{n_{T}\!-\!k_{0}}\mathbb{E}^{N}\Big[\bar{Z}_{t_{k_{0}}-}^{-2mq}\big(\frac{\Delta t_{k_{0}\!-\!1}}{\Delta t}\big)^{2q}\Big].
\end{eqnarray}
Since the number of time nodes on the interval $(T-\Delta t, T)$ does not exceed $N_{\Delta t}$, it implies that $n_{T}-k_{0}\leq N_{\Delta t}$. Therefore, we infer 
\begin{eqnarray}
	\!\!&&\!\!\mathbb{E}^{N}\big[\bar{Z}_{T-}^{-2mq}\big]\nonumber\\
	\!\!&\leq&\!\! \sum_{i=1}^{N_{\Delta t}}\big(C^q(\Delta t)^{-2mq\varepsilon}\big)^{i}+l_{0}^{-2q}\sup_{k=1,2,...,n_T}\mathbb{E}^{N}\big[\bar{Z}_{t_{k}-}^{-2mq}\big(\frac{\Delta t_{k-1}}{\Delta t}\big)^{2q}\big]\nonumber\\
	\!\!&&\!\!\cdot\bigg(\sum_{i=0}^{N_{\Delta t}-1}\big(C^q(\Delta t)^{-(\frac{1}{4}+2mq\varepsilon)}\big)^{i}+\big(C^q(\Delta t)^{-2mq\varepsilon}\big)^{N_{\Delta t}}\bigg)\nonumber\\
	\!\!&&\!\!+\sup_{k=1,2,...,n_T}\mathbb{E}^{N}\Big[\bar{Z}_{t_{k}-}^{-2mq}\mathbf{1}_{\{\bar{Z}_{t_{k}-}\leq(\Delta t)^{\varepsilon}\bar{Z}_{t_{k-1}}\}}\Big]\sum_{i=0}^{N_{\Delta t}-1}\big(C^q(\Delta t)^{-2mq\varepsilon}\big)^{i}\nonumber\\
	\!\!&\leq&\!\! C^q(\Delta t)^{-2mq\varepsilon}\frac{\big(C^q(\Delta t)^{-2mq\varepsilon}\big)^{N_{\Delta t}}-1}{C^q(\Delta t)^{-2mq\varepsilon}-1}+l_{0}^{-2q}\sup_{k=1,2,...,n_T}\mathbb{E}^{N}\big[\bar{Z}_{t_{k}-}^{-2mq}\big(\frac{\Delta t_{k-1}}{\Delta t}\big)^{2q}\big]\nonumber\\
	\!\!&&\!\!\cdot\bigg(\frac{\big(C^q(\Delta t)^{-(\frac{1}{4}+2mq\varepsilon)}\big)^{N_{\Delta t}}-1}{C^q(\Delta t)^{-(\frac{1}{4}+2mq\varepsilon)}-1}+\big(C^q(\Delta t)^{-2mq\varepsilon}\big)^{N_{\Delta t}}\bigg)\nonumber\\
	\!\!&&\!\!+\sup_{k=1,2,...,n_T}\mathbb{E}^{N}\Big[\bar{Z}_{t_{k}-}^{-2mq}\mathbf{1}_{\{\bar{Z}_{t_{k}-}\leq(\Delta t)^{\varepsilon}\bar{Z}_{t_{k-1}}\}}\Big]\frac{\big(C^q(\Delta t)^{-2mq\varepsilon}\big)^{N_{\Delta t}}-1}{C^q(\Delta t)^{-2mq\varepsilon}-1}\nonumber\\
	\!\!&\leq&\!\!2\big(C^q(\Delta t)^{-2mq\varepsilon}\big)^{N_{\Delta t}} \nonumber\\
	\!\!&&\!\!+2l_{0}^{-2q}\sup_{k=1,2,...,n_T}\mathbb{E}^{N}\big[\bar{Z}_{t_{k}-}^{-2mq}\big(\frac{\Delta t_{k-1}}{\Delta t}\big)^{2q}\big]\big(C^q(\Delta t)^{-(\frac{1}{4}+2mq\varepsilon)}\big)^{N_{\Delta t}}\nonumber\\
	\!\!&&\!\!+\sup_{k=1,2,...,n_T}\mathbb{E}^{N}\Big[\bar{Z}_{t_{k}-}^{-2mq}\mathbf{1}_{\{\bar{Z}_{t_{k}-}\leq(\Delta t)^{\varepsilon}\bar{Z}_{t_{k-1}}\}}\Big]\big(C^q(\Delta t)^{-2mq\varepsilon}\big)^{N_{\Delta t}},
\end{eqnarray}
 where constant $C$ is independent of $\Delta t$ and $C^q>2$. With the aid of the properties of the Poisson process and the condition $\varepsilon<\frac{1}{8mq}$, we arrive at
\begin{eqnarray}\label{d62}
	\!\!&&\!\!\mathbb{E}\Big[\big(C^{2q}(\Delta t)^{-(\frac{1}{2}+4mq\varepsilon)}\big)^{N_{\Delta t}}\Big]\nonumber\\
	\!\!&=&\!\!\sum_{j=0}^{\infty}\big(C^{2q}(\Delta t)^{-(\frac{1}{2}+4mq\varepsilon)}\big)^{j}\frac{(\lambda \Delta t)^{j}}{j!}\exp(-\lambda \Delta t)\nonumber\\
	\!\!&\leq&\!\!\sum_{j=0}^{\infty}\frac{(C^{2q})^j\lambda^{j}}{j!}\exp(-\lambda \Delta t)<\infty.
\end{eqnarray}
Consequently, with sufficiently small $\Delta t$ satisfying \eqref{0.65} and \eqref{0.66}, by the H\"{o}lder inequality, it follows from \eqref{d62}, \eqref{c83} and \eqref{x99} that
\begin{eqnarray}
\!\!&&\!\!\mathbb{E}\big[\bar{Z}_{T-}^{-2mq}\big]\nonumber\\
\!\!&\leq&\!\!2\mathbb{E}\Big[\big(C^q(\Delta t)^{-2mq\varepsilon}\big)^{N_{\Delta t}}\Big]+2l_{0}^{-2q}\Big(\mathbb{E}\Big[\big(C^{2q}(\Delta t)^{-(\frac{1}{2}+4mq\varepsilon)}\big)^{N_{\Delta t}}\Big]\Big)^{\frac{1}{2}}\nonumber\\
\!\!&&\!\!\cdot\Big(\mathbb{E}\Big[\sup_{k=1,2,...,n_{T}}\bar{Z}_{t_{k}-}^{-4mq}\big(\frac{\Delta t_{k-1}}{\Delta t}\big)^{4q}\Big]\Big)^{\frac{1}{2}}+\Big(\mathbb{E}\Big[\big(C^{2q}(\Delta t)^{-4mq\varepsilon}\big)^{N_{\Delta t}}\Big]\Big)^{\frac{1}{2}}\nonumber\\
\!\!&&\!\!\cdot\Big\|\sup_{k=1,2,...,n_{T}}\mathbb{E}^{N}\big[\bar{Z}_{t_{k}-}^{-2mq}\mathbf{1}_{\{\bar{Z}_{t_{k}-}\leq(\Delta t)^{\varepsilon}\bar{Z}_{t_{k-1}}\}}\big]\Big\| _{L_{2}(\Omega, \mathbb{R})}\nonumber\\
\!\!&<&\!\!\infty.
\end{eqnarray}
The proof is completed.
\end{proof}

In the last part of this section, we will present the main convergence result of the numerical method (TJABEM) for $\eqref{1.1}$ under appropriate conditions. Meanwhile, we find that the numerical scheme preserves the positivity of the original model $\eqref{1.1}$ after transforming back.

\subsection{Main result}
To transform the numerical method $\bar{Z}_{t_{k}}$ in $\eqref{3.4}$ back, we define the numerical method as
\begin{eqnarray}\label{4.1}
	\bar{X}_{t_{k}}=(\bar{Z}_{t_{k}})^{\frac{1}{1-\rho}},\ \ k\in\{0,1,...,n_{T}\},
\end{eqnarray}
  called the TJABEM for the original model $\eqref{1.1}$. Since $\bar{Z}_{t_{k}}>0$ a.s. for $k\in\{0,1,...,n_{T}\}$, the numerical solution $\{\bar{X}_{t_{k}}\}_{k\in\{0,1,...,n_{T}\}}$ is inside the positive domain obviously. 

\begin{theorem}\label{thm4.1}
Let Assumptions $\ref{ass2.2}$, $\ref{ass3.3}$ hold and let $\gamma> 2\rho-1$. Then for any $p\geq 1$ and $\varepsilon\in\big(0,\frac{\rho-1}{8\rho p}\wedge\frac{2(\gamma+1-2\rho)}{3\rho(\gamma-1)}\big)$, with sufficiently small $\Delta t$ satisfying \eqref{0.65} and \eqref{0.66}, we have
\begin{eqnarray}\label{82}
	\mathbb{E}\big[\big\vert\bar{X}_{T}-X_{T}\big\vert^{p}\big]\leq C(\Delta t)^{p}.
\end{eqnarray}
\end{theorem}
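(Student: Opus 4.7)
The plan is to linearize the inverse Lamperti transform via the mean value theorem and then combine the $L^{2p}$-convergence of $\bar{Z}$ (Theorem \ref{thm3.5}) with the inverse-moment bound on $\bar{Z}_{T-}$ (Lemma \ref{lem3.10}) to transfer the order-one error on $Z$ to the same order on $X$. Since both $\bar{X}_T = \bar{Z}_T^{1/(1-\rho)}$ and $X_T = Z_T^{1/(1-\rho)}$ are a.s.\ positive, applying the mean value theorem to $f(z) = z^{1/(1-\rho)}$ on $(0,\infty)$ gives
\[
\bar{X}_T - X_T \;=\; -\tfrac{1}{\rho-1}\,\zeta_T^{-\rho/(\rho-1)}\,(\bar{Z}_T - Z_T)
\]
for some random $\zeta_T$ between $\bar{Z}_T$ and $Z_T$. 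Because $x\mapsto x^{-\rho/(\rho-1)}$ is decreasing on $\mathbb{R}_+$, the weight satisfies $\zeta_T^{-\rho/(\rho-1)} \le \bar{Z}_T^{-\rho/(\rho-1)} + Z_T^{-\rho/(\rho-1)}$.

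Raising to the $p$-th power and applying the Cauchy--Schwarz inequality, I would obtain
\[
\mathbb{E}\big[|\bar{X}_T-X_T|^p\big] \;\le\; C\Big(\mathbb{E}\big[\bar{Z}_T^{-\tfrac{2p\rho}{\rho-1}}\big] + \mathbb{E}\big[Z_T^{-\tfrac{2p\rho}{\rho-1}}\big]\Big)^{1/2}\Big(\mathbb{E}\big[|\bar{Z}_T-Z_T|^{2p}\big]\Big)^{1/2}.
\]
The second factor is controlled by Theorem \ref{thm3.5} with $\eta=2p$, giving an $O((\Delta t)^p)$ contribution. Since $T$ is deterministic, $\mathbb{P}(\Delta N_T\neq 0)=0$, so $\bar{Z}_T=\bar{Z}_{T-}$ and $Z_T=Z_{T-}$ a.s.\ and we may pass freely to the left-continuous versions. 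The exact-solution inverse moment satisfies $Z_{T-}^{-2p\rho/(\rho-1)} = X_T^{2p\rho}$, which has finite expectation by Proposition \ref{pro2.4} under $\gamma>2\rho-1$. For the numerical inverse moment I would invoke Lemma \ref{lem3.10} with $q=p\rho/(\gamma-\rho)$, so that $2mq = 2p\rho/(\rho-1)$ matches the exponent above; when $p\rho<\gamma-\rho$ (so $q<1$) I would apply the lemma with $q=1$ and then use Jensen's inequality to downscale. Under this choice, the admissibility bound $\varepsilon<1/(8mq)$ of Lemma \ref{lem3.10} becomes exactly $\varepsilon<(\rho-1)/(8\rho p)$, which is precisely the hypothesis of Theorem \ref{thm4.1}, and the step-size restrictions \eqref{0.65}--\eqref{0.66} are inherited verbatim.

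Combining the three estimates yields $\mathbb{E}[|\bar{X}_T - X_T|^p] \le C(\Delta t)^p$. The heavy lifting has already been performed in Lemmas \ref{lem3.6}--\ref{lem3.10} and Theorem \ref{thm3.5}; the present argument is essentially a bookkeeping step. The only subtlety worth verifying carefully is the matching of exponents between the $p$ of Theorem \ref{thm4.1} and the $q$ of Lemma \ref{lem3.10}, together with the fact that the theorem only claims a bound at the terminal point $T$, which is what the inverse-moment Lemma \ref{lem3.10} delivers (a uniform-in-$k$ bound would instead require the much stronger pathwise control of $\sup_k \bar{Z}_{t_k-}^{-2mq}$, which is not asserted).
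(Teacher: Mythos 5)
Your proposal is correct and follows essentially the same route as the paper: mean value theorem for the inverse Lamperti map, Cauchy--Schwarz to split the error into the $L^{2p}$-distance of the $Z$-approximations (Theorem \ref{thm3.5}) and the negative moments of $\bar{Z}_T\wedge Z_T$, with the latter controlled by Corollary \ref{cor3.2} and Lemma \ref{lem3.10} applied with $q=p\rho/(\gamma-\rho)$ so that $2mq=2p\rho/(\rho-1)$ and $1/(8mq)=(\rho-1)/(8\rho p)$. The only (immaterial) differences are that the paper passes from $\bar{Z}_T$ to $\bar{Z}_{T-}$ via the deterministic jump bound \eqref{62} rather than via $\mathbb{P}(\Delta N_T\neq 0)=0$, and your explicit Jensen fix for the case $q<1$ is a detail the paper leaves implicit.
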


\begin{proof}
	Since $X_{T}=Z_{T}^{\frac{1}{1-\rho}}$ and $\bar{X}_{T}=\bar{Z}_{T}^{\frac{1}{1-\rho}}$, the mean value theorem gives
	\begin{eqnarray}
	\big\vert \bar{X}_{T}-X_{T}\big\vert=\big\vert \bar{Z}_{T}^{\frac{1}{1-\rho}}-Z_{T}^{\frac{1}{1-\rho}}\big\vert\leq \frac{1}{\rho-1}\big\vert \bar{Z}_{T}-Z_{T}\big\vert\cdot \big\vert\bar{Z}_{T}\wedge Z_{T}\big\vert^{\frac{\rho}{1-\rho}}.
	\end{eqnarray}
The H\"{o}lder inequality and Theorem $\ref{thm3.5}$ ensure that for any $p\geq 1$, 
\begin{eqnarray}\label{0.45}
\mathbb{E}\big[\big\vert\bar{X}_{T}-X_{T}\big\vert^{p}\big]&=& \mathbb{E}\Big[\big\vert\bar{Z}_{T}^{\frac{1}{1-\rho}}-Z_{T}^{\frac{1}{1-\rho}}\big\vert^{p}\Big]\nonumber\\
&\leq&C\mathbb{E}\Big[\big\vert\bar{Z}_{T}-Z_{T}\big\vert^{p}\big\vert\bar{Z}_{T}\wedge Z_{T}\big\vert^{\frac{p\rho}{1-\rho}}\Big]\nonumber\\
&\leq&C\Big(\mathbb{E}\Big[\big\vert\bar{Z}_{T}-Z_{T}\big\vert^{2p}\Big]\Big)^{1/2}\Big(\mathbb{E}\Big[\big\vert\bar{Z}_{T}\wedge Z_{T}\big\vert^{\frac{2p\rho}{1-\rho}}\Big]\Big)^{1/2}\nonumber\\
&\leq& C(\Delta t)^{p}\Big(\mathbb{E}\Big[\big\vert\bar{Z}_{T}\wedge Z_{T}\big\vert^{\frac{2p\rho}{1-\rho}}\Big]\Big)^{1/2}.
\end{eqnarray}
Since $\bar{Z}_{T}>0$ a.s., for $\gamma>2\rho-1$, we have
\begin{eqnarray}
\mathbb{E}\Big[\bar{Z}_{T}^{\frac{2p\rho}{1-\rho}}\Big]\leq C+\mathbb{E}\Big[\bar{Z}_{T}^{\frac{2p\rho}{1-\rho}}\mathbf{1}_{\{\bar{Z}_{T}<1\}}\Big]\leq C+\mathbb{E}\Big[\bar{Z}_{T}^{-2m\frac{\rho p}{\gamma-\rho}}\Big],
\end{eqnarray}
where $m=\frac{\gamma-\rho}{\rho-1}$. Corollary $\ref{cor3.2}$, \eqref{62} and Lemma $\ref{lem3.10}$ lead to
\begin{eqnarray}\label{0.47}
\mathbb{E}\Big[\big\vert\bar{Z}_{T}\wedge Z_{T}\big\vert^{\frac{2p\rho}{1-\rho}}\Big]\leq C\!+\!\mathbb{E}\Big[\bar{Z}_{T}^{-2m\frac{\rho p}{\gamma-\rho}}\Big]\!+\!\mathbb{E}\Big[Z_{T}^{\frac{2p\rho}{1-\rho}}\Big]\leq C\!+\!C\mathbb{E}\Big[\bar{Z}_{T-}^{-2m\frac{\rho p}{\gamma-\rho}}\Big]<\infty.
\end{eqnarray}
Inserting $\eqref{0.47}$ into $\eqref{0.45}$ yields the desired assertion.
\end{proof}

\section{Numerical experiments}
Consider Ait--Sahalia-type interest rate model with Poisson jumps
 \begin{eqnarray}\label{5.1}
 	dX_{t}&=&(\alpha_{-1}X_{t-}^{-1}-\alpha_{0}+\alpha_{1}X_{t-}-\alpha_{2}X_{t-}^{\gamma})dt\nonumber\\
 	&&+\alpha_{3} X_{t-}^{\rho}dW_t+h(X_{t-})dN_t,\ \  t\in(0,T], \ \ X_{0}=x_{0},
 \end{eqnarray}
with $\alpha_{-1},\alpha_{0},\alpha_{1},\alpha_{2},\alpha_{3}>0$ and $\gamma>2\rho-1, \rho>1$. We conduct simulations using of our numerical method. In our experiments, we choose two sets of parameters:
\begin{itemize}
	\item[I.] $\alpha_{-1}=2,\alpha_{0}=1,\alpha_{1}=1.5,\alpha_{2}=5, \alpha_{3}=1, \gamma=3, \rho=1.5, x_{0}=1$; \label{1}
	\item[II.] $\alpha_{-1}=1,\alpha_{0}=2,\alpha_{1}=1.5,\alpha_{2}=3, \alpha_{3}=1, \gamma=3.5, \rho=1.5, x_{0}=1$.\label{2}
\end{itemize}

Firstly, we present the percentages of non-positive numerical values in Tab.\ref{tab1} for the two cases above with different jump coefficients $h(x)$ where 5000 sample trajectories are simulated. From which we can see that the numerical method (TJABEM) is positivity preserving.

\begin{table}[h]
\begin{center}
\begin{minipage}{\textwidth}
\caption{Percentages of non-positive numerical values of the TJABEM with $\lambda=1$, the different jump coefficients $h(x)$ and the different parameters sets.}\label{tab1}%
\resizebox{\textwidth}{18mm}{
\begin{tabular}{@{}cccc@{}}%{|c|c|c|c|}@{}c@{}
\toprule
\textbf{Step sizes(T=1)} & \textbf{h(x)}  & \textbf{TJABEM($\%$)( Case I.)} & \textbf{TJABEM($\%$)(Case II.)}\\
\midrule	
	              & -0.5x  & 0  & 0 \\
	$2^{-5}$ &  0.5x  & 0  & 0  \\
			      & sin(x) & 0  & 0  \\
		          &   &   &     \\ [-9pt]
			      	\hline
			      &   &   &     \\ [-9pt]
	              & -0.5x  & 0  & 0 \\
   $2^{-6}$ &  0.5x  & 0  & 0 \\
			      & sin(x) & 0  & 0 \\
			      &   &   &     \\ [-9pt]
			      	\hline
			      &   &   &     \\ [-9pt]
                  &-0.5x   & 0  & 0 \\
   $2^{-7}$ &  0.5x  & 0  & 0 \\
			      & sin(x) & 0  & 0 \\
\botrule
\end{tabular}}
\end{minipage}
\end{center}
\end{table}

Secondly, we aim to verify that our numerical method is strongly convergent with order one. Normally, we measure the approximation errors in terms of $\varepsilon(\Delta t):=\mathbb{E}\big[\vert X_{T}-\bar{X}_{T}\vert\big]$. In the corresponding plots we show $\log(\varepsilon(\Delta t))$ of the strong error versus $\log(\Delta t)$ of the maximal time step size. The slopes of the estimated error in the log-log plots indicate the order of the strong convergence attained by our numerical method. Here the expectation is approximated by the Monte Carlo method, using 5000 Brownian and Poisson paths. 

\begin{figure}[!ht]\centering
\begin{tabular}{cc}
\includegraphics[height=5cm,width=5.8cm]{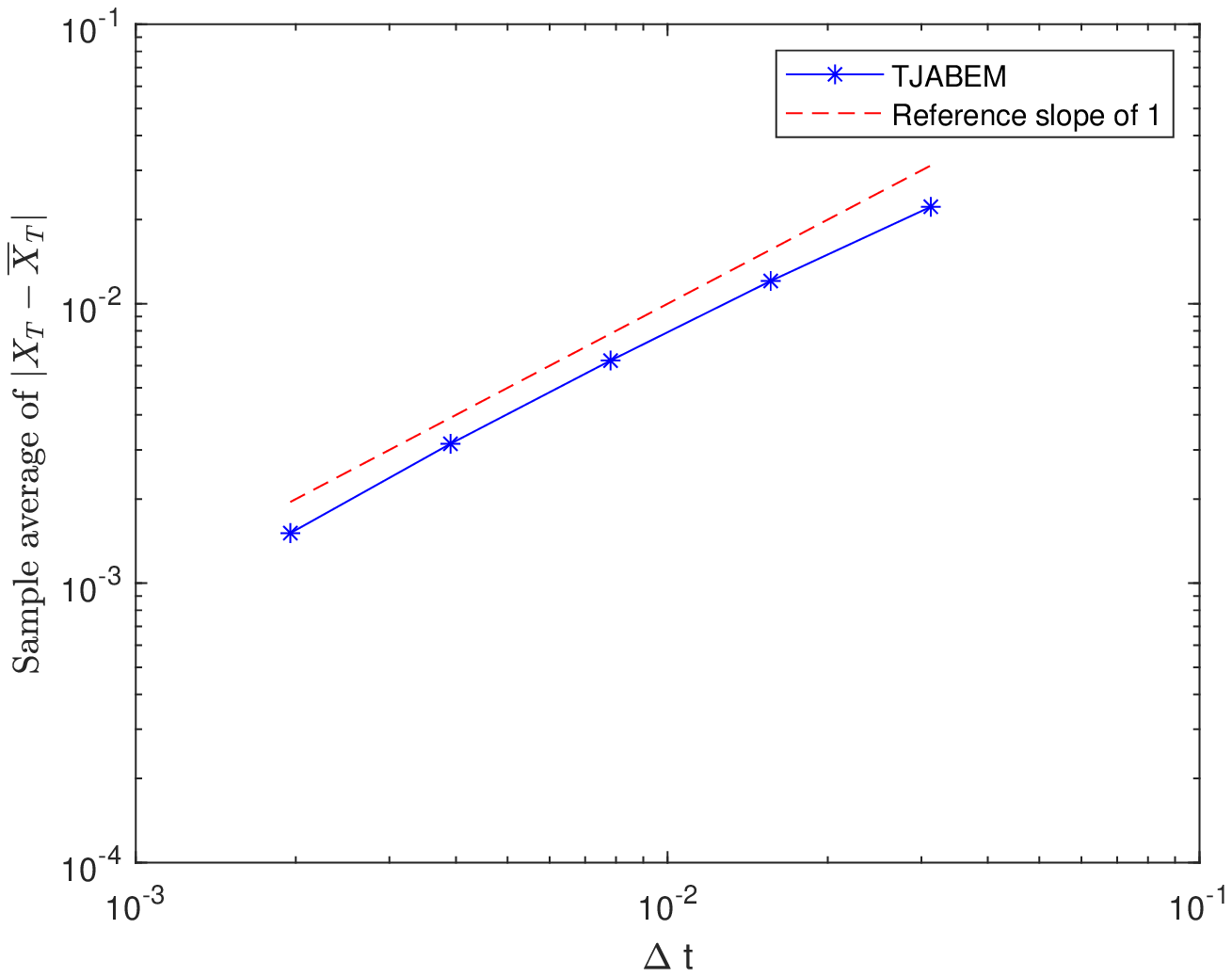} &
\includegraphics[height=5cm,width=5.8cm]{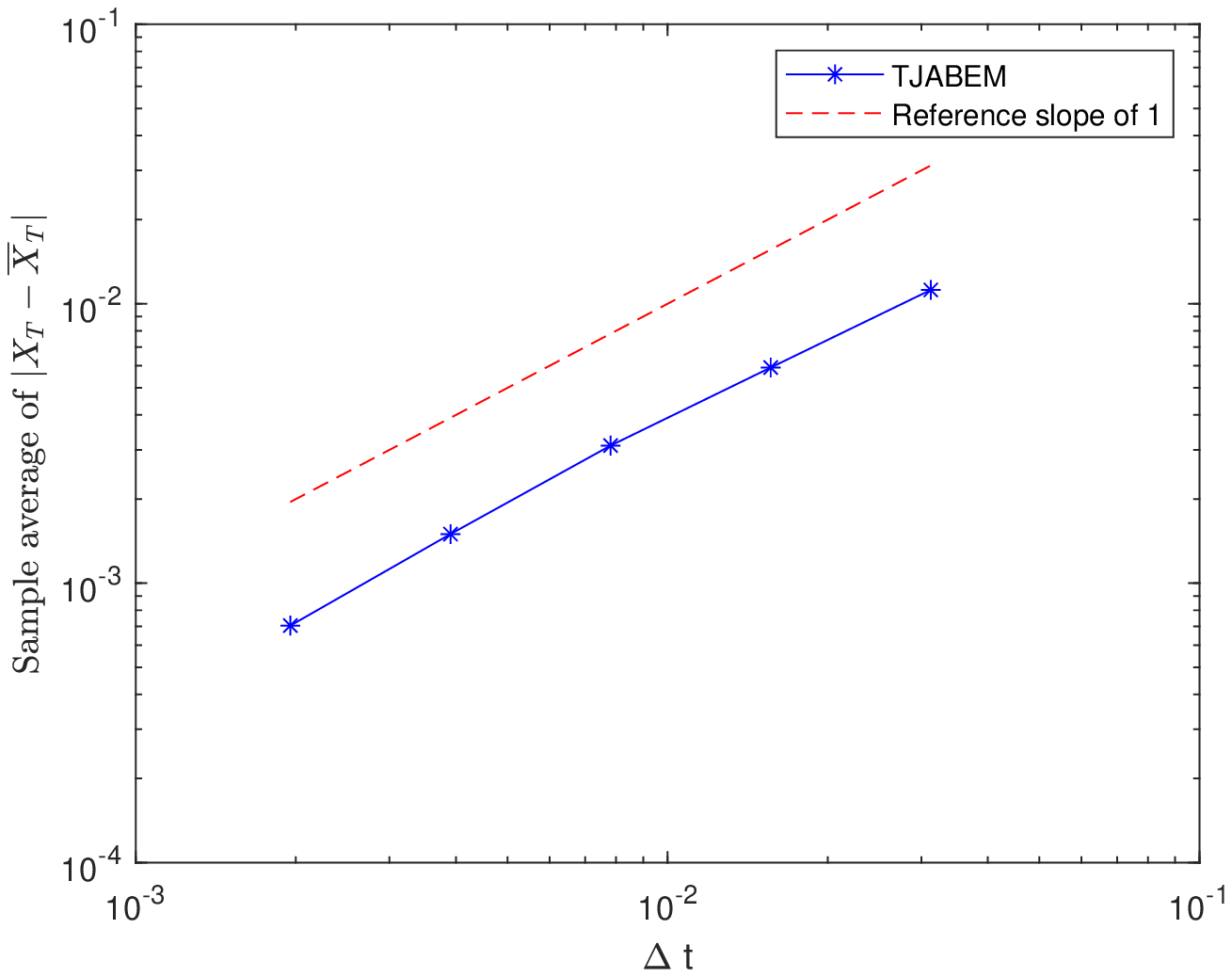} \\
\end{tabular}
\caption{Numerical simulations of \eqref{5.1} with parameters set $I.$, $h(x)=-0.5x, \lambda=1$ (left) and parameters set $II.$, $h(x)=-0.5x, \lambda=1$ (right).}
\label{fig1}
\end{figure}

\begin{figure}[!ht]\centering
\begin{tabular}{cc}
\includegraphics[height=5cm,width=5.8cm]{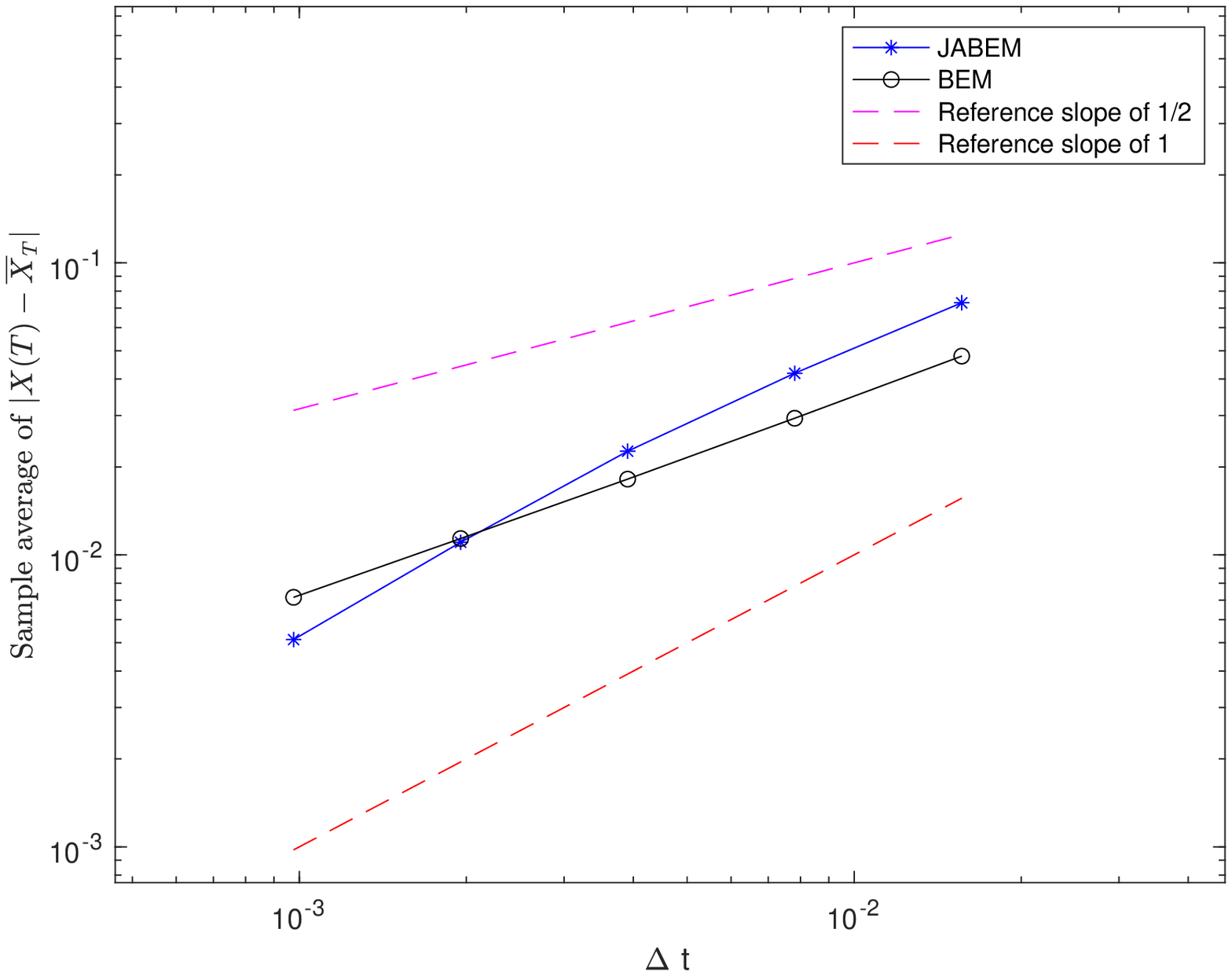} &
\includegraphics[height=5cm,width=5.8cm]{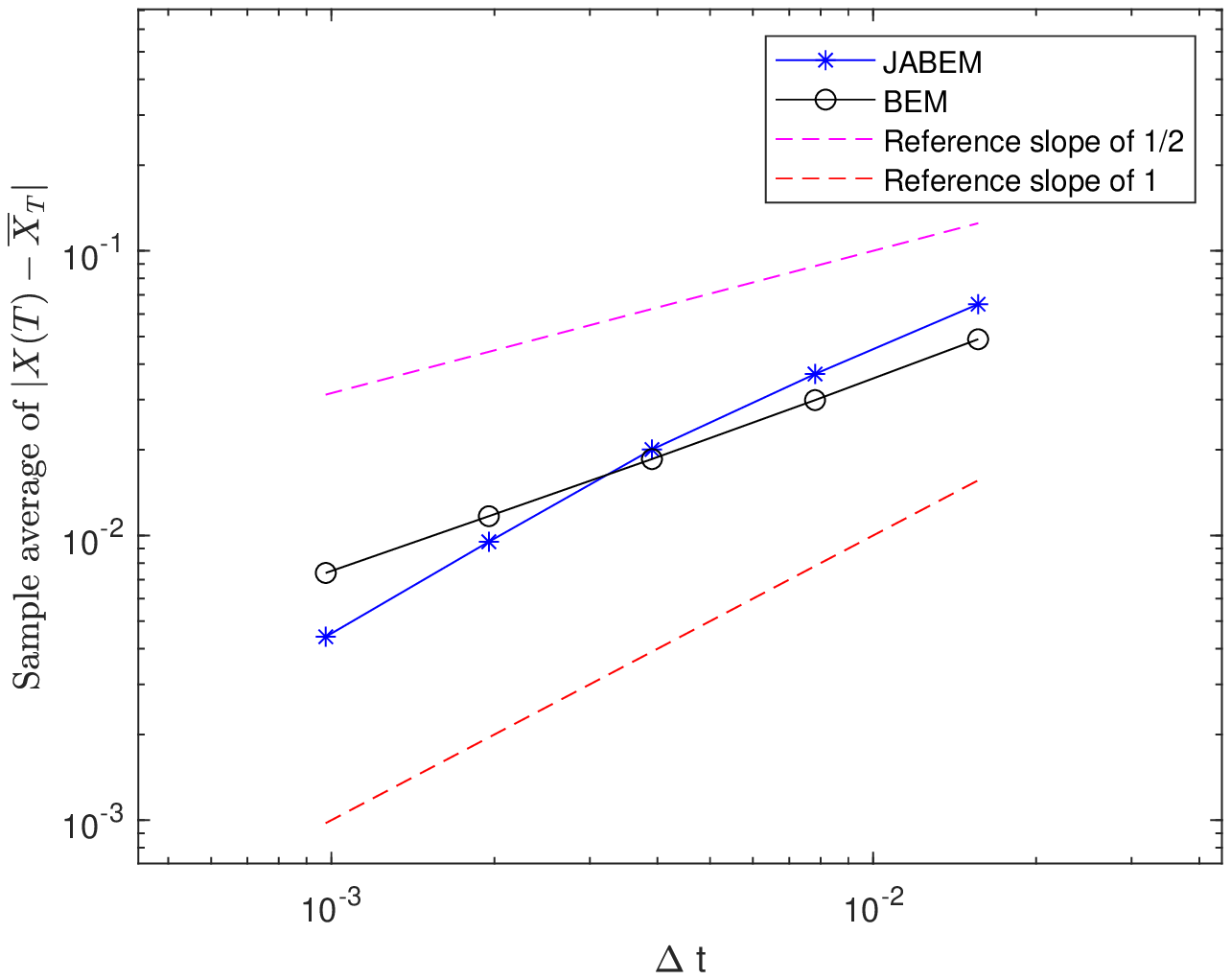} \\
\end{tabular}
\caption{Numerical simulations of \eqref{5.1} with parameters set $I.$, $h(x)=x, \lambda=5$ (left) and parameters set $II.$, $h(x)=x,\lambda=5$ (right).}
\label{fig2}
\end{figure} 

Looking closely at the figures above, we draw the following conclusions under different intensities,
\begin{itemize}
\item when the intensity $\lambda=1$, the exact solution of the model $\eqref{1.1}$ is identified with the numerical one using a small step size $\Delta t_{exact}=2^{-12}$. For five different step sizes $\Delta t=2^{-i}, i=5,...,9$, as predicted, the slopes of the errors (solid lines) and the reference (dashed line) match well, which indicates that the proposed method shows a strong convergence rate of order one. Therefore, the results showed in Fig.\ref{fig1} are consistent with our theoretical results, namely Theorem $\ref{thm4.1}$.

	\item when the intensity $\lambda=5$, the exact solution of the model $\eqref{1.1}$ is identified with the numerical one using a small step size $\Delta t_{exact}=2^{-13}$. For five different step sizes $\Delta t=2^{-i}, i=6,...,10$, we compare the convergence rate of the TJABEM with that of the BEM in \cite{Zhao2021On}. As shown in Fig.\ref{fig2}, the TJABEM has a strong convergence rate of order one. At the same time, one can see that the BEM is strongly convergent with order only one half. 
\end{itemize}

\section*{Appendix}\label{App}
\subsection*{Proof of Lemma \ref{lem3.6}.} 
\begin{proof}
For $k\in\{1,2,...,n_{T}\}$, let $\Phi(x,t_{k-1},\Delta t_{k-1})$ denote the numerical value at time $t_{k}-$ where the initial value is $x$ at time $t_{k-1}$, that is,
\begin{eqnarray}\label{0.2}
	\Phi(x,t_{k-1},\Delta t_{k-1})=x+F_{\gamma,\rho}(\Phi(x,t_{k-1},\Delta t_{k-1}))\Delta t_{k-1}+(1-\rho)\alpha_{3}\Delta W_{k-1}.
\end{eqnarray}
Obviously, $\bar{Z}_{t_{k}-}=\Phi(\bar{Z}_{t_{k-1}},t_{k-1},\Delta t_{k-1})$. Moreover, we write $Z^{t_{k-1},x}_{t_{k}-}$ in lieu of the value of exact solution at time $t_{k}-$ to highlight the initial value $Z_{t_{k-1}}=x$, that is,
\begin{eqnarray}\label{0.3}
	Z^{t_{k-1},x}_{t_{k}-}=x+\int_{t_{k-1}}^{t_{k}}F_{\gamma,\rho}(Z^{t_{k-1},x}_{t-})\,dt+\int_{t_{k-1}}^{t_{k}}(1-\rho)\alpha_{3}\,dW_t.
\end{eqnarray}
We have $Z_{0}=\bar{Z}_{0}$ for $k=0$. For any $k\in\{1,...,n_{T}\}$,
\begin{eqnarray}
&&F_{\gamma,\rho}(Z_{t_{k}-})-F_{\gamma,\rho}(\bar{Z}_{t_{k}-})\nonumber\\
&=&F_{\gamma,\rho}(Z^{t_{k-1},Z_{t_{k-1}}}_{t_{k}-})-F_{\gamma,\rho}\big(\Phi(\bar{Z}_{t_{k-1}},t_{k-1},\Delta t_{k-1})\big)\nonumber\\
&=&\big[F_{\gamma,\rho}(Z^{t_{k-1},Z_{t_{k-1}}}_{t_{k}-})-F_{\gamma,\rho}\big(\Phi(Z_{t_{k-1}},t_{k-1},\Delta t_{k-1})\big)\big]\nonumber\\
&&+\big[F_{\gamma,\rho}\big(\Phi(Z_{t_{k-1}},t_{k-1},\Delta t_{k-1})\big)-F_{\gamma,\rho}\big(\Phi(\bar{Z}_{t_{k-1}},t_{k-1},\Delta t_{k-1})\big)\big]\nonumber\\
&=:&J_{1}+J_{2}.
\end{eqnarray}
 Under the condition $q\geq 1$, we infer
\begin{eqnarray}
&&\mathbb{E}\Big[\sup_{k=1,2,...,n_{T}}\big\vert\Delta t_{k-1}(F_{\gamma,\rho}(Z_{t_{k}-})-F_{\gamma,\rho}(\bar{Z}_{t_{k}-}))\big\vert^{2q}\Big]\nonumber\\
&\leq& C\bigg((\Delta t)^{2q}\mathbb{E}\Big[\sup_{k=1,2,...,n_{T}}\vert J_{1}\vert^{2q}\Big]+\mathbb{E}\Big[\sup_{k=1,2,...,n_{T}}\vert \Delta t_{k-1}J_{2}\vert^{2q}\Big]\bigg).
\end{eqnarray}
We now estimate the moment of $J_{1}$,
\begin{eqnarray}
&&\mathbb{E}\Big[\sup_{k=1,2,...,n_{T}}\vert J_{1}\vert^{2q}\Big]\nonumber\\
\!\!&=&\!\!\mathbb{E}\Big[\!\sup_{k=1,2,...,n_{T}}\!\!\Big\vert \!\int_{0}^{1}\!\!F'_{\gamma,\rho}\big(Z^{t_{k-1},Z_{t_{k-1}}}_{t_{k}-}\!\!+\!\theta(\Phi(Z_{t_{k-1}},t_{k-1},\Delta t_{k-1})\!-\!Z^{t_{k-1},Z_{t_{k-1}}}_{t_{k}-})\big)\!\,d\theta\nonumber\\
&&\cdot\big(Z^{t_{k-1},Z_{t_{k-1}}}_{t_{k}-}-\Phi(Z_{t_{k-1}},t_{k-1},\Delta t_{k-1})\big)\Big\vert^{2q}\Big].
\end{eqnarray}
Let
$$M_{\gamma,\rho,t_{k}}:=\int_{0}^{1}F'_{\gamma,\rho}(Z^{t_{k-1},Z_{t_{k-1}}}_{t_{k}-}+\theta(\Phi(Z_{t_{k-1}},t_{k-1},\Delta t_{k-1})-Z^{t_{k-1},Z_{t_{k-1}}}_{t_{k}-}))\,d\theta.$$
By $\eqref{3.6}$, we have $M_{\gamma,\rho,t_{k}}\leq Q_{\gamma,\rho}$, a.s. In light of $\eqref{3.17}$, we see
\begin{eqnarray}
&&Z^{t_{k-1},Z_{t_{k-1}}}_{t_{k}-}-\Phi(Z_{t_{k-1}},t_{k-1},\Delta t_{k-1})\nonumber\\
&=&\big[F_{\gamma,\rho}(Z^{t_{k-1},Z_{t_{k-1}}}_{t_{k}-})-F_{\gamma,\rho}\big(\Phi(Z_{t_{k-1}},t_{k-1},\Delta t_{k-1})\big)\big]\Delta t_{k-1}-\Xi_{k}\nonumber\\
&=&M_{\gamma,\rho,t_{k}}\Delta t_{k-1}\big(Z^{t_{k-1},Z_{t_{k-1}}}_{t_{k}-}-\Phi(Z_{t_{k-1}},t_{k-1},\Delta t_{k-1})\big)-\Xi_{k}\nonumber.
\end{eqnarray}
Since $M_{\gamma,\rho,t_{k}}\Delta t_{k-1}\leq Q_{\gamma,\rho}\Delta t<\xi$ a.s. where $\xi\in(0,1)$, we deduce
\begin{eqnarray}\label{3.37}
	Z^{t_{k-1},Z_{t_{k-1}}}_{t_{k}-}-\Phi(Z_{t_{k-1}},t_{k-1},\Delta t_{k-1})=\frac{\Xi_{k}}{M_{\gamma,\rho,t_{k}}\Delta t_{k-1}-1}.
\end{eqnarray}
Therefore,
\begin{eqnarray}\label{0.9}
\mathbb{E}\Big[\sup_{k=1,2,...,n_{T}}\vert J_{1}\vert^{2q}\Big]&=&\mathbb{E}\Big[\sup_{k=1,2,...,n_{T}}\big\vert \frac{M_{\gamma,\rho,t_{k}}\Xi_{k}}{M_{\gamma,\rho,t_{k}}\Delta t_{k-1}-1}\big\vert^{2q}\Big]\nonumber\\
&=&\mathbb{E}\Big[\sup_{k=1,2,...,n_{T}}\big\vert \frac{M_{\gamma,\rho,t_{k}}\Delta t_{k-1}}{M_{\gamma,\rho,t_{k}}\Delta t_{k-1}-1}\big\vert^{2q}\big\vert \frac{\Xi_{k}}{\Delta t_{k-1}}\big\vert^{2q}\Big]\nonumber\\
&\leq&\Big(1\vee\big(\frac{\xi}{1-\xi}\big)^{2q}\Big)\mathbb{E}\Big[\sup_{k=1,2,...,n_{T}}\big\vert\frac{\Xi_{k}}{\Delta t_{k-1}}\big\vert^{2q}\Big].
\end{eqnarray}
The last inequality in $\eqref{0.9}$ results from the facts that $\big\vert\frac{z\Delta t}{z\Delta t-1}\big\vert^{2q}$ increase to 1 as $z(z<0)$ decreasing to $-\infty$ for all $\Delta t\in(0,1)$ and $M_{\gamma,\rho,t_{k}}\Delta t_{k-1}\leq Q_{\gamma,\rho}\Delta t<\xi$ a.s. where $\xi\in(0,1)$. According to the proof of Theorem $\ref{thm3.5}$, we have
\begin{eqnarray}
	\Xi_{k}\!\!&=&\!\!\int_{t_{k-1}}^{t_{k}}(s-t_{k-1})\big[(F'_{\gamma,\rho}F_{\gamma,\rho})(Z^{t_{k-1},Z_{t_{k-1}}}_{s-})+\frac{1}{2}(1-\rho)^{2}\alpha_{3}^{2}F''_{\gamma,\rho}(Z^{t_{k-1},Z_{t_{k-1}}}_{s-})\big]\,ds\nonumber\\
	&&+(1-\rho)\alpha_{3}\int_{t_{k-1}}^{t_{k}}(s-t_{k-1})F'_{\gamma,\rho}(Z^{t_{k-1},Z_{t_{k-1}}}_{s-})\,dW_s, \ \  k\in\{1,2,...,n_{T}\}.
\end{eqnarray}
By the H\"{o}lder inequality, we deduce
\begin{eqnarray}
&&\mathbb{E}\Big[\sup_{k=1,2,...,n_{T}}\big\vert\frac{\Xi_{k}}{\Delta t_{k-1}}\big\vert^{2q}\Big]\nonumber\\
\!\!&=&\!\!\mathbb{E}\Big[\sup_{k=1,2,...,n_{T}}\Big\vert\int_{t_{k-1}}^{t_{k}}\frac{s-t_{k-1}}{\Delta t_{k-1}}\big[(F'_{\gamma,\rho}F_{\gamma,\rho})(Z^{t_{k-1},Z_{t_{k-1}}}_{s-})+\frac{1}{2}(1-\rho)^{2}\alpha_{3}^{2}\nonumber\\
&&\cdot F''_{\gamma,\rho}(Z^{t_{k-1},Z_{t_{k-1}}}_{s-})\big]\,ds+\frac{(1-\rho)\alpha_{3}}{\Delta t_{k-1}}\int_{t_{k-1}}^{t_{k}}(s-t_{k-1})F'_{\gamma,\rho}(Z^{t_{k-1},Z_{t_{k-1}}}_{s-})\,dW_s\Big\vert^{2q}\Big]\nonumber\\
\!\!&\leq&\!\!C\mathbb{E}\Big[\sup_{k=1,2,...,n_{T}}\Big\vert\int_{t_{k-1}}^{t_{k}}\frac{s-t_{k-1}}{\Delta t_{k-1}}\big[(F'_{\gamma,\rho}F_{\gamma,\rho})(Z^{t_{k-1},Z_{t_{k-1}}}_{s-})\nonumber\\
&&+\frac{1}{2}(1-\rho)^{2}\alpha_{3}^{2}F''_{\gamma,\rho}(Z^{t_{k-1},Z_{t_{k-1}}}_{s-})\big]\,ds\Big\vert^{2q}\Big]\nonumber\\
&&+C\mathbb{E}\Big[\sup_{k=1,2,...,n_{T}}\Big\vert\int_{t_{k-1}}^{t_{k}}\frac{s-t_{k-1}}{\Delta t_{k-1}}F'_{\gamma,\rho}(Z^{t_{k-1},Z_{t_{k-1}}}_{s-})\,dW_s\Big\vert^{2q}\Big]\nonumber\\
\!\!&\leq&\!\!C\mathbb{E}\Big[\sup_{k=1,2,...,n_{T}}\Big\vert\int_{t_{k-1}}^{t_{k}}\frac{s-t_{\lfloor s\rfloor}}{\Delta t_{\lfloor s\rfloor}}F'_{\gamma,\rho}(Z^{t_{\lfloor s\rfloor},Z_{t_{\lfloor s\rfloor}}}_{s-})\,dW_s\Big\vert^{2q}\Big]+C(\Delta t)^{2q-1}\nonumber\\
&&\cdot\mathbb{E}\Big[\!\sup_{k=1,2,...,n_{T}}\!\int_{t_{k-1}}^{t_{k}}\!\!\big\vert(F'_{\gamma,\rho}F_{\gamma,\rho})(Z^{t_{\lfloor s\rfloor},Z_{t_{\lfloor s\rfloor}}}_{s-})+\frac{1}{2}(1-\rho)^{2}\alpha_{3}^{2}F''_{\gamma,\rho}(Z^{t_{\lfloor s\rfloor},Z_{t_{\lfloor s\rfloor}}}_{s-})\big\vert^{2q}\!\,ds\Big]\nonumber.
\end{eqnarray}
Then $\eqref{3.9}$ and the BDG inequality lead to
\begin{eqnarray}\label{3.40}
&&\mathbb{E}\Big[\sup_{k=1,2,...,n_{T}}\big\vert\frac{\Xi_{k}}{\Delta t_{k-1}}\big\vert^{2q}\Big]\nonumber\\
\!\!&\leq&\!\!C(\Delta t)^{2q}\mathbb{E}\Big[\sup_{s\in[0,T]}\big\vert(F'_{\gamma,\rho}F_{\gamma,\rho})(Z^{t_{\lfloor s\rfloor},Z_{t_{\lfloor s\rfloor}}}_{s-})+\frac{1}{2}(1-\rho)^{2}\alpha_{3}^{2}F''_{\gamma,\rho}(Z^{t_{\lfloor s\rfloor},Z_{t_{\lfloor s\rfloor}}}_{s-})\big\vert^{2q}\Big]\nonumber\\
&&+C\mathbb{E}\Big[\sup_{k=1,2,...,n_{T}}\Big\vert\int_{0}^{t_{k}}\frac{s-t_{\lfloor s\rfloor}}{\Delta t_{\lfloor s\rfloor}}F'_{\gamma,\rho}(Z^{t_{\lfloor s\rfloor},Z_{t_{\lfloor s\rfloor}}}_{s-})\,dW_s\Big\vert^{2q}\Big]\nonumber\\
\!\!&\leq&\!\!C(\Delta t)^{2q}+C\mathbb{E}\Big[\sup_{t\in[0,T]}\Big\vert\int_{0}^{t}\frac{s-t_{\lfloor s\rfloor}}{\Delta t_{\lfloor s\rfloor}}F'_{\gamma,\rho}(Z^{t_{\lfloor s\rfloor},Z_{t_{\lfloor s\rfloor}}}_{s-})\,dW_s\Big\vert^{2q}\Big]\nonumber\\
\!\!&\leq&\!\!C(\Delta t)^{2q}+C\mathbb{E}\Big[\Big(\int_{0}^{T}\big\vert\frac{s-t_{\lfloor s\rfloor}}{\Delta t_{\lfloor s\rfloor}}F'_{\gamma,\rho}(Z^{t_{\lfloor s\rfloor},Z_{t_{\lfloor s\rfloor}}}_{s-})\big\vert^{2}\,ds\Big)^{q}\Big]\nonumber\\
\!\!&\leq&\!\! C.
\end{eqnarray}
Combining $\eqref{0.9}$ and \eqref{3.40} gives
\begin{eqnarray}
\mathbb{E}\big[\sup_{k=1,2,...,n_{T}}\vert J_{1}\vert^{2q}\big]\leq C.	
\end{eqnarray}
To estimate $J_{2}$, we note that
\begin{eqnarray}
	\!\!&&F_{\gamma,\rho}\big(\Phi(Z_{t_{k-1}},t_{k-1},\Delta t_{k-1})\big)-F_{\gamma,\rho}\big(\Phi(\bar{Z}_{t_{k-1}},t_{k-1},\Delta t_{k-1})\big)\nonumber\\
	\!\!&=&\!\!\big(\Phi(Z_{t_{k-1}},t_{k-1},\Delta t_{k-1})-\Phi(\bar{Z}_{t_{k-1}},t_{k-1},\Delta t_{k-1})\big)\!\!\int_{0}^{1}F'_{\gamma,\rho}\big(\Phi(Z_{t_{k-1}},t_{k-1},\Delta t_{k-1})\nonumber\\
	&&+\theta(\Phi(\bar{Z}_{t_{k-1}},t_{k-1},\Delta t_{k-1})\!-\!\Phi(Z_{t_{k-1}},t_{k-1},\Delta t_{k-1}))\big)\,d\theta\nonumber.
\end{eqnarray}
Let
\begin{eqnarray}
	\bar{M}_{\gamma,\rho,t_{k}}\!\!&:=&\!\!\int_{0}^{1}F'_{\gamma,\rho}\big(\Phi(Z_{t_{k-1}},t_{k-1},\Delta t_{k-1})\nonumber\\
	&&+\theta(\Phi(\bar{Z}_{t_{k-1}},t_{k-1},\Delta t_{k-1})\!-\!\Phi(Z_{t_{k-1}},t_{k-1},\Delta t_{k-1}))\big)\,d\theta.
\end{eqnarray}
By $\eqref{3.6}$, we have $\bar{M}_{\gamma,\rho,t_{k}}\leq Q_{\gamma,\rho}$, a.s. In light of $\eqref{0.2}$, we deduce
\begin{eqnarray}
	&&\Phi(Z_{t_{k-1}},t_{k-1},\Delta t_{k-1})-\Phi(\bar{Z}_{t_{k-1}},t_{k-1},\Delta t_{k-1})\nonumber\\
	&=&Z_{t_{k-1}}\!-\!\bar{Z}_{t_{k-1}}+\bar{M}_{\gamma,\rho,t_{k}}\Delta t_{k-1}[\Phi(Z_{t_{k-1}},t_{k-1},\Delta t_{k-1})\!-\!\Phi(\bar{Z}_{t_{k-1}},t_{k-1},\Delta t_{k-1})]\nonumber
\end{eqnarray}
and 
\begin{eqnarray}
	\Phi(Z_{t_{k-1}},t_{k-1},\Delta t_{k-1})-\Phi(\bar{Z}_{t_{k-1}},t_{k-1},\Delta t_{k-1})=\frac{Z_{t_{k-1}}\!-\!\bar{Z}_{t_{k-1}}}{1-\bar{M}_{\gamma,\rho,t_{k}}\Delta t_{k-1}}\nonumber.
\end{eqnarray}
According to Theorem $\ref{thm3.5}$, we thus infer
\begin{eqnarray}
	\mathbb{E}\Big[\sup_{k=1,2,...,n_{T}}\vert \Delta t_{k-1}J_{2}\vert^{2q}\Big]\!&=&\!\mathbb{E}\Big[\sup_{k=1,2,...,n_{T}}\Big\vert\frac{\bar{M}_{\gamma,\rho,t_{k}}\Delta t_{k-1}}{1-\bar{M}_{\gamma,\rho,t_{k}}\Delta t_{k-1}}(Z_{t_{k-1}}\!-\!\bar{Z}_{t_{k-1}})\Big\vert^{2q}\Big]\nonumber\\
	\!&\leq&\!\Big(1\vee\big(\frac{\xi}{1-\xi}\big)^{2q}\Big)\mathbb{E}\Big[\sup_{k=1,2,...,n_{T}}\big\vert Z_{t_{k-1}}\!-\!\bar{Z}_{t_{k-1}}\big\vert^{2q}\Big]\nonumber\\
	\!&\leq&\!C(\Delta t)^{2q}.
\end{eqnarray}
Consequently,
\begin{eqnarray}\label{81}
	\mathbb{E}\Big[\sup_{k=1,2,...,n_{T}}\big\vert\Delta t_{k-1}(F_{\gamma,\rho}(Z_{t_{k}-})-F_{\gamma,\rho}(\bar{Z}_{t_{k}-}))\big\vert^{2q}\Big]\leq C(\Delta t)^{2q}.
\end{eqnarray}
Using the triangle inequality and \eqref{81} yields
\begin{eqnarray}
&&\mathbb{E}\Big[\sup_{k=1,2,...,n_{T}}\big\vert \Delta t_{k-1}F_{\gamma,\rho}(\bar{Z}_{t_{k}-})\big\vert^{2q}\Big]\nonumber\\
&\leq&C\mathbb{E}\Big[\sup_{k=1,2,...,n_{T}}\big\vert \Delta t_{k-1}F_{\gamma,\rho}(Z_{t_{k}-})\big\vert^{2q}\Big]\nonumber\\
&&+C\mathbb{E}\Big[\sup_{k=1,2,...,n_{T}}\big\vert\Delta t_{k-1}(F_{\gamma,\rho}(Z_{t_{k}-})-F_{\gamma,\rho}(\bar{Z}_{t_{k}-}))\big\vert^{2q}\Big]\nonumber\\
&\leq& C(\Delta t)^{2q}+C\mathbb{E}\Big[\sup_{k=1,2,...,n_{T}}\!\!\big\vert\Delta t_{k-1} F_{\gamma,\rho}(Z_{t_{k}-})\big\vert^{2q}\Big].\nonumber
\end{eqnarray}
The proof is completed.	
\end{proof}

\subsection*{Proof of Lemma \ref{lem3.9}.}
\begin{proof}
Let $\gamma> 2\rho-1$. For any $q\geq 1$, $\varepsilon\in(0,1)$ and $k\in\{1,2,...,n_{T}\}$, using the Cauchy--Bunyakovsky--Schwarz inequality gives 
\begin{eqnarray}\label{65}
\!\!&&\!\!\mathbb{E}^{N}\Big[\bar{Z}_{t_{k}-}^{-2mq}\mathbf{1}_{\{\bar{Z}_{t_{k}-}\leq(\Delta t)^{\varepsilon}\bar{Z}_{t_{k-1}}\}}\Big]\nonumber\\
\!\!&=&\!\!\mathbb{E}^{N}\Big[\bar{Z}_{t_{k}-}^{-2mq}(\Delta t_{k-1})^{2q}\cdot\frac{1}{(\Delta t_{k-1})^{2q}}\mathbf{1}_{\{\bar{Z}_{t_{k}-}\leq(\Delta t)^{\varepsilon}\bar{Z}_{t_{k-1}}\}}\Big]\nonumber\\
\!\!&\leq&\!\!(\Delta t)^{2q}\!\Big(\mathbb{E}^{N}\!\Big[\bar{Z}_{t_{k}-}^{-4mq}\big(\frac{\Delta t_{k\!-\!1}}{\Delta t}\big)^{4q}\Big]\Big)^{\frac{1}{2}}\!\Big(\mathbb{E}^{N}\!\Big[\frac{1}{(\Delta t_{k\!-\!1})^{4q}}\mathbf{1}_{\{\bar{Z}_{t_{k}-}\leq(\Delta t)^{\varepsilon}\bar{Z}_{t_{k\!-\!1}}\}}\Big]\Big)^{\frac{1}{2}}.
\end{eqnarray}	
By \eqref{3.6}, for any $x>y>0$, we have
\begin{eqnarray}
	F_{\gamma,\rho}(y)\geq F_{\gamma,\rho}(x)-Q_{\gamma,\rho}(x-y).
\end{eqnarray}
It follows from \eqref{11} and \eqref{3.4} that if $\bar{Z}_{t_{k}-}\leq(\Delta t)^{\varepsilon}\bar{Z}_{t_{k-1}}$, 
\begin{eqnarray}\label{a86}
	\!\!\Delta W_{k-1}\!\!&\geq&\!\! \frac{(1\!-\!(\Delta t)^{\varepsilon})\bar{Z}_{t_{k-1}}\!+\![F_{\gamma,\rho}((\Delta t)^{\varepsilon}\bar{Z}_{t_{k-1}})\!-\!Q_{\gamma,\rho}((\Delta t)^{\varepsilon}\bar{Z}_{t_{k-1}}\!-\!\bar{Z}_{t_{k}-})]\Delta t_{k-1}}{(\rho-1)\alpha_{3}}\nonumber\\
\!&>&\! \frac{(1\!-\!(\Delta t)^{\varepsilon})\bar{Z}_{t_{k-1}}\!\!+\!(\rho-1)\big[\alpha_{2}(\Delta t)^{-m\varepsilon}\bar{Z}_{t_{k-1}}^{-m}+\frac{\rho\alpha_{3}^{2}}{2}(\Delta t)^{-\varepsilon}\bar{Z}_{t_{k-1}}^{-1}\big]\Delta t_{k-1}}{(\rho-1)\alpha_{3}}\nonumber\\
\!&&\!-\frac{\big[\alpha_{-1}(\Delta t)^{\frac{\rho+1}{\rho-1}\varepsilon}\bar{Z}_{t_{k-1}}^{\frac{\rho+1}{\rho-1}}\!+\!(\alpha_{1}+Q_{\gamma,\rho}/(\rho-1))(\Delta t)^{\varepsilon}\bar{Z}_{t_{k-1}}\big]\Delta t_{k-1}}{\alpha_{3}}.
\end{eqnarray}
Let 
\begin{tiny}
\begin{eqnarray}
	\!\!A_{(k-1)}\!\!:=\!\!\Bigg\{\omega\colon \Delta W_{k-1}\!\!&>&\!\! \frac{(1\!-\!(\Delta t)^{\varepsilon})\bar{Z}_{t_{k-1}}\!\!+\!(\rho-1)\Big[\alpha_{2}(\Delta t)^{-m\varepsilon}\bar{Z}_{t_{k-1}}^{-m}+\frac{\rho\alpha_{3}^{2}}{2}(\Delta t)^{-\varepsilon}\bar{Z}_{t_{k-1}}^{-1}\Big]\Delta t_{k-1}}{(\rho-1)\alpha_{3}}\nonumber\\
\!\!&&\!\!-\frac{\Big[\alpha_{-1}(\Delta t)^{\frac{\rho+1}{\rho-1}\varepsilon}\bar{Z}_{t_{k-1}}^{\frac{\rho+1}{\rho-1}}\!+\!(\alpha_{1}+Q_{\gamma,\rho}/(\rho-1))(\Delta t)^{\varepsilon}\bar{Z}_{t_{k-1}}\Big]\Delta t_{k-1}}{\alpha_{3}},\nonumber\\
\!\!&&\!\! (\Delta t_{k-1})^{\varepsilon}\bar{Z}_{t_{k-1}}^{\frac{2}{\rho-1}}\!\!\leq\! 1\Bigg\}\nonumber.
\end{eqnarray}
\end{tiny}
One can infer that if $(\Delta t_{k-1})^{\varepsilon}\bar{Z}_{t_{k-1}}^{\frac{2}{\rho-1}}\leq 1$,
\begin{small}
\begin{eqnarray}\label{a71}
\!\!&&\!\!\frac{(1\!-\!(\Delta t)^{\varepsilon})\bar{Z}_{t_{k-1}}\!\!+\!(\rho-1)\Big[\alpha_{2}(\Delta t)^{-m\varepsilon}\bar{Z}_{t_{k-1}}^{-m}+\frac{\rho\alpha_{3}^{2}}{2}(\Delta t)^{-\varepsilon}\bar{Z}_{t_{k-1}}^{-1}\Big]\Delta t_{k-1}}{(\rho-1)\alpha_{3}}\nonumber\\
\!\!&&\!\!-\frac{\Big[\alpha_{-1}(\Delta t)^{\frac{\rho+1}{\rho-1}\varepsilon}\bar{Z}_{t_{k-1}}^{\frac{\rho+1}{\rho-1}}\!+\!(\alpha_{1}+Q_{\gamma,\rho}/(\rho-1))(\Delta t)^{\varepsilon}\bar{Z}_{t_{k-1}}\Big]\Delta t_{k-1}}{\alpha_{3}}\nonumber\\
\!\!&>&\!\!\frac{\big[1\!-\!\big((\Delta t)^{\varepsilon}\!+\!(\rho\!-\!1)\alpha_{-1}(\Delta t)^{\frac{\rho\!+\!1}{\rho\!-\!1}\varepsilon}(\Delta t_{k\!-\!1})^{1\!-\!\varepsilon}\!+\!((\rho\!-\!1)\alpha_{1}\!+\!Q_{\gamma,\rho})(\Delta t)^{\varepsilon}\Delta t_{k\!-\!1}\big)\big]\bar{Z}_{t_{k\!-\!1}}}{(\rho\!-\!1)\alpha_{3}}\nonumber\\
\!\!&&\!\!+\alpha_{3}^{-1}\alpha_{2}(\Delta t)^{-m\varepsilon}\bar{Z}_{t_{k-1}}^{-m}\Delta t_{k-1}.
\end{eqnarray}
\end{small}
With sufficiently small $\Delta t$ satisfying
\begin{small}
\begin{eqnarray}\label{69}
	\!\!(\Delta t)^{\varepsilon}\!<\!\frac{1}{2(1\!+\!(\rho\!-\!1)(\alpha_{-1}\!+\!\alpha_{1})\!+\!Q_{\gamma,\rho})},\ \  \!(\Delta t)^{\frac{m\!-\!1}{2m}\!+\varepsilon}\!\leq\! \frac{\alpha_{2}^{1/m}}{2(\rho\!-\!1)\alpha_{3}^{(m+1)/m}},
\end{eqnarray}
\end{small}
in which the first inequality implies that
\begin{eqnarray}
	\!\!&&\!\!(\Delta t)^{\varepsilon}\!+\!(\rho\!-\!1)\alpha_{-1}(\Delta t)^{\frac{\rho\!+\!1}{\rho\!-\!1}\varepsilon}(\Delta t_{k\!-\!1})^{1\!-\!\varepsilon}\!+\!((\rho\!-\!1)\alpha_{1}\!+\!Q_{\gamma,\rho})(\Delta t)^{\varepsilon}\Delta t_{k\!-\!1}\nonumber\\
	\!\!&\leq&\!\!(\Delta t)^{\varepsilon}+(\rho-1)\alpha_{-1}(\Delta t)^{1+\frac{2}{\rho\!-\!1}\varepsilon}+((\rho\!-\!1)\alpha_{1}\!+\!Q_{\gamma,\rho})(\Delta t)^{1+\varepsilon}\nonumber\\
	\!\!&\leq&\!\!\big(1+(\rho-1)(\alpha_{-1}+\alpha_{1})+Q_{\gamma,\rho}\big)(\Delta t)^{\varepsilon}<\frac{1}{2}
\end{eqnarray}
holds almost surely, we divide the whole space $\Omega$ into two parts $\big\{\bar{Z}_{t_{k-1}}\!\geq\! 2(\rho-1)\alpha_{3}\sqrt{\Delta t_{k-1}}\big\}$ and $\big\{\bar{Z}_{t_{k-1}}\!<\! 2(\rho-1)\alpha_{3}\sqrt{\Delta t_{k-1}}\big\}$, and further estimate \eqref{a71} gives
\begin{eqnarray}\label{a74}
\!\!&&\!\!\frac{(1\!-\!(\Delta t)^{\varepsilon})\bar{Z}_{t_{k-1}}\!\!+\!(\rho-1)\Big[\alpha_{2}(\Delta t)^{-m\varepsilon}\bar{Z}_{t_{k-1}}^{-m}+\frac{\rho\alpha_{3}^{2}}{2}(\Delta t)^{-\varepsilon}\bar{Z}_{t_{k-1}}^{-1}\Big]\Delta t_{k-1}}{(\rho-1)\alpha_{3}}\nonumber\\
\!\!&&\!\!-\frac{\Big[\alpha_{-1}(\Delta t)^{\frac{\rho+1}{\rho-1}\varepsilon}\bar{Z}_{t_{k-1}}^{\frac{\rho+1}{\rho-1}}\!+\!(\alpha_{1}+Q_{\gamma,\rho}/(\rho-1))(\Delta t)^{\varepsilon}\bar{Z}_{t_{k-1}}\Big]\Delta t_{k-1}}{\alpha_{3}}\nonumber\\
\!\!&>&\!\!\frac{\bar{Z}_{t_{k-1}}}{2(\rho-1)\alpha_{3}}+\frac{\alpha_{2}\sqrt{\Delta t_{k-1}}}{\alpha_{3}(\Delta t)^{m\varepsilon}\bar{Z}_{t_{k-1}}^{m}}\sqrt{\Delta t_{k-1}}\nonumber\\
\!\!&>&\!\!\sqrt{\Delta t_{k-1}}\cdot\mathbf{1}_{\{\bar{Z}_{t_{k-1}}\geq 2(\rho-1)\alpha_{3}\sqrt{\Delta t_{k-1}}\}}\nonumber\\
\!\!&&\!\!+\frac{\alpha_{2}}{\alpha_{3}(\Delta t)^{m\varepsilon}}(2(\rho-1)\alpha_{3})^{-m}(\Delta  t_{k-1})^{\frac{1-m}{2}}\sqrt{\Delta t_{k-1}}\cdot\mathbf{1}_{\{\bar{Z}_{t_{k-1}}<2(\rho-1)\alpha_{3}\sqrt{\Delta t_{k-1}}\}}\nonumber\\
\!\!&>&\!\! \sqrt{\Delta t_{k-1}},
\end{eqnarray}
where we have used the second inequality of \eqref{69} in the last inequality. It is well known that for a standard normal random variable $G$ and constant $\beta\!>\!0$, there is a standard inequality for the lower tail of the normal distribution, namely
 \begin{eqnarray}\label{90}
 	\mathbb{P}(G\geq\beta)\leq \frac{1}{\sqrt{2\pi}\beta}\exp(-\frac{\beta^{2}}{2}).
 \end{eqnarray}
Then from the definition of Wiener process, it follows that for time nodes $t_{k-1}, t_{k}\in\mathbb{R}, 0\leq t_{k-1}<t_{k}\leq T$ and constant $a>0$, 
\begin{eqnarray}
	\mathbb{P}(\Delta W_{k-1}\geq a)\leq \frac{\sqrt{\Delta t_{k-1}}}{\sqrt{2\pi}a}\exp(-\frac{a^{2}}{2\Delta t_{k-1}})
\end{eqnarray}
according to $\Delta W_{k-1}\sim N(0,\Delta t_{k-1})$. Conditioning on $\sigma(\mathcal{F}_{T}^{N}\cup\mathcal{F}_{t_{k-1}}^{W})$, we find that for the Wiener increment appeared in \eqref{a86}, 
\begin{tiny}
\begin{eqnarray}
	\mathbb{P}\Bigg(\Delta W_{k-1}\!\!&>&\!\! \frac{(1\!-\!(\Delta t)^{\varepsilon})\bar{Z}_{t_{k-1}}\!\!+\!(\rho-1)\big[\alpha_{2}(\Delta t)^{-m\varepsilon}\bar{Z}_{t_{k-1}}^{-m}+\frac{\rho\alpha_{3}^{2}}{2}(\Delta t)^{-\varepsilon}\bar{Z}_{t_{k-1}}^{-1}\big]\Delta t_{k-1}}{(\rho-1)\alpha_{3}}\nonumber\\
	\!\!&&\!\!-\frac{\big[\alpha_{-1}(\Delta t)^{\frac{\rho+1}{\rho-1}\varepsilon}\bar{Z}_{t_{k-1}}^{\frac{\rho+1}{\rho-1}}\!+\!(\alpha_{1}+Q_{\gamma,\rho}/(\rho-1))(\Delta t)^{\varepsilon}\bar{Z}_{t_{k-1}}\big]\Delta t_{k-1}}{\alpha_{3}}\Bigg\vert\sigma(\mathcal{F}_{T}^{N}\cup\mathcal{F}_{t_{k-1}}^{W})\Bigg)\nonumber\\
\!\!&\leq&\!\!	\exp\Big\{-\Big((1\!-\!(\Delta t)^{\varepsilon})\bar{Z}_{t_{k\!-\!1}}+(\rho\!-\!1)\Delta t_{k-1}\Big[\alpha_{2}(\Delta t)^{-m\varepsilon}\bar{Z}_{t_{k-1}}^{-m}\nonumber\\
\!\!&&\!\!+\frac{\rho\alpha_{3}^{2}}{2}(\Delta t)^{-\varepsilon}\bar{Z}_{t_{k-1}}^{-1}\!-\!\big(\alpha_{-1}(\Delta t)^{\frac{\rho\!+\!1}{\rho\!-\!1}\varepsilon}\bar{Z}_{t_{k\!-\!1}}^{\frac{\rho\!+\!1}{\rho\!-\!1}}\!+\!(\alpha_{1}\!+\!Q_{\gamma,\rho}/(\rho\!-\!1))(\Delta t)^{\varepsilon}\bar{Z}_{t_{k\!-\!1}}\big)\Big]\Big)^{2}\nonumber\\
\!\!&&\!\!\cdot\frac{1}{2(\rho-1)^{2}\alpha_{3}^{2}\Delta t_{k-1}}\Big\}\nonumber
\end{eqnarray}
\end{tiny}
when $(\Delta t_{k-1})^{\varepsilon}\bar{Z}_{t_{k-1}}^{\frac{2}{\rho-1}}\leq 1$, where we note that \eqref{a74} implies that the factor ``$\frac{\sqrt{\Delta t_{k-1}}}{\sqrt{2\pi}a}$'' is bounded by 1 when
\begin{eqnarray}
	a\!\!&:=&\!\! \frac{(1\!-\!(\Delta t)^{\varepsilon})\bar{Z}_{t_{k-1}}\!\!+\!(\rho-1)\big[\alpha_{2}(\Delta t)^{-m\varepsilon}\bar{Z}_{t_{k-1}}^{-m}+\frac{\rho\alpha_{3}^{2}}{2}(\Delta t)^{-\varepsilon}\bar{Z}_{t_{k-1}}^{-1}\big]\Delta t_{k-1}}{(\rho-1)\alpha_{3}}\nonumber\\
	\!\!&&\!\!-\frac{\big[\alpha_{-1}(\Delta t)^{\frac{\rho+1}{\rho-1}\varepsilon}\bar{Z}_{t_{k-1}}^{\frac{\rho+1}{\rho-1}}\!+\!(\alpha_{1}+Q_{\gamma,\rho}/(\rho-1))(\Delta t)^{\varepsilon}\bar{Z}_{t_{k-1}}\big]\Delta t_{k-1}}{\alpha_{3}}.\nonumber
\end{eqnarray}
Combining \eqref{a86}, the properties of conditional expectation and the result above, we deduce
\begin{eqnarray}\label{68}
\!\!&&\!\!\mathbb{E}^{N}\Big[\frac{1}{(\Delta t_{k-1})^{4q}}\mathbf{1}_{\{\bar{Z}_{t_{k}-}\leq(\Delta t)^{\varepsilon}\bar{Z}_{t_{k-1}}\}}\Big]\nonumber\\
\!\!&=&\!\!\mathbb{E}^{N}\Big[\frac{1}{(\Delta t_{k-1})^{4q}}\mathbf{1}_{\{\bar{Z}_{t_{k}-}\leq(\Delta t)^{\varepsilon}\bar{Z}_{t_{k-1}},(\Delta t_{k-1})^{\varepsilon}\bar{Z}_{t_{k-1}}^{\frac{2}{\rho\!-\!1}}\leq1\}}\Big]\nonumber\\
\!\!&&\!\!+\mathbb{E}^{N}\Big[\frac{1}{(\Delta t_{k-1})^{4q}}\mathbf{1}_{\{\bar{Z}_{t_{k}-}\leq(\Delta t)^{\varepsilon}\bar{Z}_{t_{k-1}},(\Delta t_{k-1})^{\varepsilon}\bar{Z}_{t_{k-1}}^{\frac{2}{\rho\!-\!1}}>1\}}\Big]\nonumber\\
\!\!&\leq&\!\!\mathbb{E}^{N}\Big[\frac{1}{(\Delta t_{k-1})^{4q}}\mathbb{E}\big[\mathbf{1}_{A_{(k-1)}}\big\vert\sigma(\mathcal{F}_{T}^{N}\cup\mathcal{F}_{t_{k-1}}^{W})\big]\Big]\nonumber\\
\!\!&&\!\!+\mathbb{E}^{N}\Big[\frac{1}{(\Delta t_{k-1})^{4q}}\mathbf{1}_{\{\bar{Z}_{t_{k}-}\leq(\Delta t)^{\varepsilon}\bar{Z}_{t_{k-1}},(\Delta t_{k-1})^{\varepsilon}\bar{Z}_{t_{k-1}}^{\frac{2}{\rho\!-\!1}}>1\}}\Big]\nonumber\\
\!\!&\leq&\!\!\mathbb{E}^{N}\Big[\frac{1}{(\Delta t_{k-1})^{4q}}
\exp\Big\{-\Big((1\!-\!(\Delta t)^{\varepsilon})\bar{Z}_{t_{k\!-\!1}}+(\rho\!-\!1)\Delta t_{k-1}\Big[\alpha_{2}(\Delta t)^{-m\varepsilon}\bar{Z}_{t_{k-1}}^{-m}\nonumber\\
\!\!&&\!\!+\frac{\rho\alpha_{3}^{2}}{2}(\Delta t)^{-\varepsilon}\bar{Z}_{t_{k-1}}^{-1}\!-\!\big(\alpha_{-1}(\Delta t)^{\frac{\rho\!+\!1}{\rho\!-\!1}\varepsilon}\bar{Z}_{t_{k\!-\!1}}^{\frac{\rho\!+\!1}{\rho\!-\!1}}\!+\!(\alpha_{1}\!+\!Q_{\gamma,\rho}/(\rho\!-\!1))(\Delta t)^{\varepsilon}\bar{Z}_{t_{k\!-\!1}}\big)\Big]\Big)^{2}\nonumber\\
\!\!&&\!\!\cdot\frac{1}{2(\rho\!-\!1)^{2}\alpha_{3}^{2}\Delta t_{k-1}}\Big\}\mathbf{1}_{\{(\Delta t_{k-1})^{\varepsilon}\bar{Z}_{t_{k-1}}^{\frac{2}{\rho\!-\!1}}\leq 1\}}\Big]\!+\!\mathbb{E}^{N}\Big[\frac{1}{(\Delta t_{k\!-\!1})^{4q}}\mathbf{1}_{\{(\Delta t_{k\!-\!1})^{\varepsilon}\bar{Z}_{t_{k\!-\!1}}^{\frac{2}{\rho-1}}>1\}}\Big]\nonumber\\
\!\!&\leq&\!\! \mathbb{E}^{N}\Big[\bar{Z}_{t_{k-1}}^{\frac{8q}{(\rho-1)\varepsilon}}\Big]+\mathbb{E}^{N}\Big[\frac{1}{(\Delta t_{k-1})^{4q}}\exp\Big(-\frac{(1\!-\!(\Delta t)^{\varepsilon})^{2}\bar{Z}_{t_{k-1}}^{2}}{2(\rho-1)^{2}\alpha_{3}^{2}\Delta t_{k-1}}\Big)\nonumber\\
\!\!&&\!\!\cdot \exp\Big(-\frac{(1\!-\!(\Delta t)^{\varepsilon})\big[\alpha_{2}(\Delta t)^{-m\varepsilon}\bar{Z}_{t_{k-1}}^{1-m}+\frac{\rho\alpha_{3}^{2}}{2}(\Delta t)^{-\varepsilon}\big]}{(\rho-1)\alpha_{3}^{2}}\Big)\\
\!\!&&\!\!\cdot\exp\Big(\frac{(1\!-\!(\Delta t)^{\varepsilon})\big[\alpha_{-1}(\Delta t)^{\frac{\rho\!+\!1}{\rho\!-\!1}\varepsilon}(\Delta t_{k\!-\!1})^{-\rho\varepsilon}\!+\!(\alpha_{1}\!+\!\frac{Q_{\gamma,\rho}}{\rho-1})(\Delta t)^{\varepsilon}(\Delta t_{k\!-\!1})^{-(\rho\!-\!1)\varepsilon}\big]}{(\rho\!-\!1)\alpha_{3}^{2}}\Big)\Big].\nonumber
\end{eqnarray}
Now we divide the whole space $\Omega$ into two parts $\big\{\bar{Z}_{t_{k-1}}\geq (\Delta t_{k-1})^{\frac{1}{3}-\frac{\rho\varepsilon}{2}}\big\}$ and $\big\{\bar{Z}_{t_{k-1}}< (\Delta t_{k-1})^{\frac{1}{3}-\frac{\rho\varepsilon}{2}}\big\}$. The estimate \eqref{68} implies
\begin{eqnarray}
\!\!&&\!\!\mathbb{E}^{N}\Big[\frac{1}{(\Delta t_{k-1})^{4q}}\mathbf{1}_{\{\bar{Z}_{t_{k}-}\leq(\Delta t)^{\varepsilon}\bar{Z}_{t_{k-1}}\}}\Big]\nonumber\\
\!\!&\leq&\!\!\mathbb{E}^{N}\Big[\bar{Z}_{t_{k-1}}^{\frac{8q}{(\rho-1)\varepsilon}}\Big]+\mathbb{E}^{N}\Big[\frac{1}{(\Delta t_{k-1})^{4q}}\exp\Big(-\frac{(1\!-\!(\Delta t)^{\varepsilon})^{2}}{2(\rho-1)^{2}\alpha_{3}^{2}(\Delta t_{k-1})^{\frac{1}{3}+\rho\varepsilon}}\Big)\nonumber\\
\!\!&&\!\!\cdot\exp\Big(\frac{(1\!-\!(\Delta t)^{\varepsilon})\big(\alpha_{-1}\!+\!\alpha_{1}+Q_{\gamma,\rho}/(\rho\!-\!1)\big)(\Delta t)^{\varepsilon}}{(\rho-1)\alpha_{3}^{2}(\Delta t_{k-1})^{\rho\varepsilon}}\Big)\Big]\nonumber\\
\!\!&&\!\!+\mathbb{E}^{N}\Big[\frac{1}{(\Delta t_{k-1})^{4q}} \exp\Big(-\frac{\alpha_{2}(1\!-\!(\Delta t)^{\varepsilon})}{(\rho-1)\alpha_{3}^{2}(\Delta t)^{m\varepsilon}(\Delta t_{k-1})^{(\frac{1}{3}-\frac{\rho\varepsilon}{2})(m-1)}}\Big)\nonumber\\
\!\!&&\!\!\cdot\exp\Big(\frac{(1\!-\!(\Delta t)^{\varepsilon})\big(\alpha_{-1}\!+\!\alpha_{1}+Q_{\gamma,\rho}/(\rho\!-\!1)\big)(\Delta t)^{\varepsilon}}{(\rho-1)\alpha_{3}^{2}(\Delta t_{k-1})^{\rho\varepsilon}}\Big)\Big]\nonumber.
\end{eqnarray}
For $\gamma>2\rho-1$, if $\varepsilon<\frac{2(\gamma+1-2\rho)}{3\rho(\gamma-1)}$, we have $\frac{m-1}{3}-\frac{\rho(m+1)\varepsilon}{2}>0$, where $m=\frac{\gamma-\rho}{\rho-1}$. With sufficiently small $\Delta t$ satisfying \eqref{69} and
\begin{eqnarray}
	\!\!(\Delta t)^{\varepsilon}\!\!&<&\!\!\Big(\frac{\alpha_{2}(\rho\!-\!1)}{2(\rho\!-\!1)(\alpha_{-1}\!+\!\alpha_{1})\!+\!2Q_{\gamma,\rho})}\Big)^{\frac{1}{m+1}}\wedge
	\frac{1}{2\!+\!4(\rho\!-\!1)(\alpha_{-1}\!+\!\alpha_{1})\!+\!4Q_{\gamma,\rho}},\nonumber
\end{eqnarray}
which implies that the following inequalities
\begin{eqnarray}
1-	(\Delta t)^{\varepsilon}-2(\rho-1)(\alpha_{-1}+\alpha_{1}+Q_{\gamma,\rho}/(\rho\!-\!1))(\Delta t)^{\varepsilon}(\Delta t_{k-1})^{1/3}>1/2,\nonumber\\
\alpha_{2}-(\alpha_{-1}+\alpha_{1}+Q_{\gamma,\rho}/(\rho\!-\!1))(\Delta t)^{(m+1)\varepsilon}(\Delta t_{k-1})^{\frac{m-1}{3}-\frac{\rho(m+1)\varepsilon}{2}}>\frac{\alpha_{2}}{2}
\end{eqnarray}
hold almost surely, we deduce
\begin{eqnarray}\label{71}
\!\!&&\!\!\mathbb{E}^{N}\Big[\frac{1}{(\Delta t_{k-1})^{4q}}\mathbf{1}_{\{\bar{Z}_{t_{k}-}\leq(\Delta t)^{\varepsilon}\bar{Z}_{t_{k-1}}\}}\Big]\nonumber\\
\!\!&\leq&\!\!\mathbb{E}^{N}\Big[\bar{Z}_{t_{k-1}}^{\frac{8q}{(\rho-1)\varepsilon}}\Big]\!+\!\mathbb{E}^{N}\Big[\frac{1}{(\Delta t_{k\!-\!1})^{4q}}\exp\Big(\!\!-\!\frac{(1\!-\!(\Delta t)^{\varepsilon})\big(\alpha_{-1}\!+\!\alpha_{1}\!+\!Q_{\gamma,\rho}/(\rho\!-\!1)\big)(\Delta t)^{\varepsilon}}{(\rho\!-\!1)\alpha_{3}^{2}(\Delta t_{k-1})^{\rho\varepsilon}}\nonumber\\
\!\!&&\!\!\cdot\Big(\frac{1-(\Delta t)^{\varepsilon}}{2(\rho-1)(\alpha_{-1}+\alpha_{1}+Q_{\gamma,\rho}/(\rho\!-\!1))(\Delta t)^{\varepsilon}(\Delta t_{k-1})^{1/3}}-1\Big)\Big)\Big]\nonumber\\
\!\!&&\!\!+\mathbb{E}^{N}\Big[\frac{1}{(\Delta t_{k-1})^{4q}}\exp\Big(-\frac{(1\!-\!(\Delta t)^{\varepsilon})\big(\alpha_{-1}\!+\!\alpha_{1}+Q_{\gamma,\rho}/(\rho\!-\!1)\big)(\Delta t)^{\varepsilon}}{(\rho-1)\alpha_{3}^{2}(\Delta t_{k-1})^{\rho\varepsilon}}\nonumber\\
\!\!&&\!\!\cdot\Big(\frac{\alpha_{2}}{(\alpha_{-1}+\alpha_{1}+Q_{\gamma,\rho}/(\rho\!-\!1))(\Delta t)^{(m+1)\varepsilon}(\Delta t_{k-1})^{\frac{m-1}{3}-\frac{\rho(m+1)\varepsilon}{2}}}-1\Big)\Big)\Big]\nonumber\\
\!\!&\leq&\!\!\mathbb{E}^{N}\Big[\bar{Z}_{t_{k-1}}^{\frac{8q}{(\rho-1)\varepsilon}}\Big]+\mathbb{E}^{N}\Big[\frac{1}{(\Delta t_{k-1})^{4q}}\exp\Big(-\frac{1}{8(\rho-1)^{2}\alpha_{3}^{2}(\Delta t_{k-1})^{\frac{1}{3}+\rho\varepsilon}}\Big)\Big]\nonumber\\
\!\!&&\!\!+\mathbb{E}^{N}\Big[\frac{1}{(\Delta t_{k-1})^{4q}}\exp\Big(-\frac{\alpha_{2}}{4(\rho-1)\alpha_{3}^{2}(\Delta t_{k-1})^{\rho\varepsilon+\frac{m-1}{3}-\frac{\rho(m+1)\varepsilon}{2}}}\Big)\Big].
\end{eqnarray}
Therefore, combining \eqref{65} and \eqref{71} yields that for any $k\in\{1,2,...,n_{T}\}$,
\begin{eqnarray}
\!\!&&\!\!\mathbb{E}^{N}\Big[\bar{Z}_{t_{k}-}^{-2mq}\mathbf{1}_{\{\bar{Z}_{t_{k}-}\leq(\Delta t)^{\varepsilon}\bar{Z}_{t_{k-1}}\}}\Big]	\nonumber\\
\!\!&\leq&\!\!(\Delta t)^{2q}\Big(\mathbb{E}^{N}\!\Big[\bar{Z}_{t_{k}-}^{-4mq}\big(\frac{\Delta t_{k-1}}{\Delta t}\big)^{4q}\Big]\Big)^{\frac{1}{2}}\bigg(\mathbb{E}^{N}\Big[\bar{Z}_{t_{k-1}}^{\frac{8q}{(\rho-1)\varepsilon}}\Big]\nonumber\\
\!\!&&\!\!+\mathbb{E}^{N}\Big[\frac{1}{(\Delta t_{k-1})^{4q}}\exp\Big(-\frac{1}{8(\rho-1)^{2}\alpha_{3}^{2}(\Delta t_{k-1})^{\frac{1}{3}+\rho\varepsilon}}\Big)\Big]\nonumber\\
\!\!&&\!\!+\mathbb{E}^{N}\Big[\frac{1}{(\Delta t_{k-1})^{4q}}\exp\Big(-\frac{\alpha_{2}}{4(\rho-1)\alpha_{3}^{2}(\Delta t_{k-1})^{\rho\varepsilon+\frac{m-1}{3}-\frac{\rho(m+1)\varepsilon}{2}}}\Big)\Big]\bigg)^{\frac{1}{2}}.
\end{eqnarray}
Then we infer
\begin{eqnarray}\label{a96}
\!\!&&\!\!\Big\|\sup_{k=1,2,...,n_{T}}\mathbb{E}^{N}\big[\bar{Z}_{t_{k}-}^{-2mq}\mathbf{1}_{\{\bar{Z}_{t_{k}-}\leq(\Delta t)^{\varepsilon}\bar{Z}_{t_{k-1}}\}}\big]\Big\|_{L_{2}(\Omega, \mathbb{R})}\nonumber\\
\!\!&=&\!\!\Big(\mathbb{E}\Big[\sup_{k=1,2,...,n_{T}}\Big(\mathbb{E}^{N}\big[\bar{Z}_{t_{k}-}^{-2mq}\mathbf{1}_{\{\bar{Z}_{t_{k}-}\leq(\Delta t)^{\varepsilon}\bar{Z}_{t_{k-1}}\}}\big]\Big)^{2}\Big]\Big)^{\frac{1}{2}}\nonumber\\
\!\!&\leq&\!\!(\Delta t)^{2q}\bigg(\mathbb{E}\bigg[\sup_{k=1,2,...,n_{T}}\mathbb{E}^{N}\!\Big[\bar{Z}_{t_{k}-}^{-4mq}\big(\frac{\Delta t_{k-1}}{\Delta t}\big)^{4q}\Big]\Big(\mathbb{E}^{N}\Big[\bar{Z}_{t_{k-1}}^{\frac{8q}{(\rho-1)\varepsilon}}\Big]\nonumber\\
\!\!&&\!\!+\mathbb{E}^{N}\Big[\frac{1}{(\Delta t_{k-1})^{4q}}\exp\Big(-\frac{1}{8(\rho-1)^{2}\alpha_{3}^{2}(\Delta t_{k-1})^{\frac{1}{3}+\rho\varepsilon}}\Big)\Big]\nonumber\\
\!\!&&\!\!+\mathbb{E}^{N}\Big[\frac{1}{(\Delta t_{k-1})^{4q}}\exp\Big(-\frac{\alpha_{2}}{4(\rho-1)\alpha_{3}^{2}(\Delta t_{k-1})^{\rho\varepsilon+\frac{m-1}{3}-\frac{\rho(m+1)\varepsilon}{2}}}\Big)\Big]\Big)\bigg]\bigg)^{\frac{1}{2}}\nonumber.
\end{eqnarray}
In view of \eqref{0.39} and Corollary $\ref{cor3.2}$, we have
\begin{eqnarray}\label{c83}
\mathbb{E}\Big[\sup_{k=1,2,...,n_{T}}\bar{Z}_{t_{k}-}^{-8mq}\big(\frac{\Delta t_{k-1}}{\Delta t}\big)^{8q}\Big]<\infty.
\end{eqnarray}
By Theorem \ref{thm3.5} and Corollary \ref{cor3.2}, for any $q_{0}\geq1$, the triangle inequality leads to
\begin{eqnarray}\label{93}
\mathbb{E}\big[\sup_{k=0,1,...,n_{T}}\bar{Z}_{t_{k}}^{q_{0}}\big]\!\leq\! C\mathbb{E}\big[\sup_{k=0,1,...,n_{T}}\vert\bar{Z}_{t_{k}}\!-\!Z_{t_{k}}\vert^{q_{0}}\big]\!+\!C\mathbb{E}\big[\sup_{k=0,1,...,n_{T}}Z_{t_{k}}^{q_{0}}\big]\!<\!\infty.
\end{eqnarray}
Hence using the H\"{o}lder inequality, Conditional Jensen's inequality and the above results gives
\begin{eqnarray}\label{x99}
\!\!&&\!\!\Big\|\sup_{k=1,2,...,n_{T}}\mathbb{E}^{N}\big[\bar{Z}_{t_{k}-}^{-2mq}\mathbf{1}_{\{\bar{Z}_{t_{k}-}\leq(\Delta t)^{\varepsilon}\bar{Z}_{t_{k-1}}\}}\big]\Big\|_{L_{2}(\Omega, \mathbb{R})}\nonumber\\
\!\!&\leq&\!\! C(\Delta t)^{2q}\Big(\mathbb{E}\Big[\sup_{k=1,2,...,n_{T}}\!\mathbb{E}^{N}\big[\bar{Z}_{t_{k}-}^{-8mq}\big(\frac{\Delta t_{k\!-\!1}}{\Delta t}\big)^{8q}\big]\Big]\Big)^{\frac{1}{4}}\! \bigg[\Big(\mathbb{E}\Big[\sup_{k=1,2,...,n_{T}}\!\mathbb{E}^{N}\big[\bar{Z}_{t_{k\!-\!1}}^{\frac{16q}{(\rho\!-\!1)\varepsilon}}\big]\Big]\Big)^{\frac{1}{4}}\nonumber\\
\!\!&&\!\!+\Big(\mathbb{E}\Big[\sup_{k=1,2,...,n_{T}}\mathbb{E}^{N}\Big[\frac{1}{(\Delta t_{k-1})^{8q}}\exp\Big(-\frac{1}{4(\rho-1)^{2}\alpha_{3}^{2}(\Delta t_{k-1})^{\frac{1}{3}+\rho\varepsilon}}\Big)\Big]\Big]\Big)^{\frac{1}{4}}\nonumber\\
\!\!&&\!\!+\Big(\mathbb{E}\Big[\sup_{k=1,2,...,n_{T}}\!\!\mathbb{E}^{N}\Big[\frac{1}{(\Delta t_{k\!-\!1})^{8q}}\exp\Big(\!-\!\frac{\alpha_{2}}{2(\rho\!-\!1)\alpha_{3}^{2}(\Delta t_{k\!-\!1})^{\rho\varepsilon+\frac{m\!-\!1}{3}-\frac{\rho(m\!+\!1)\varepsilon}{2}}}\Big)\Big]\Big]\Big)^{\frac{1}{4}}\bigg]\nonumber\\ 
\!\!&\leq&\!\!C(\Delta t)^{2q}\Big(\mathbb{E}\Big[\sup_{k=1,2,...,n_{T}}\bar{Z}_{t_{k}-}^{-8mq}\big(\frac{\Delta t_{k-1}}{\Delta t}\big)^{8q}\Big]\Big)^{\frac{1}{4}}\bigg[\Big(\mathbb{E}\Big[\sup_{k=1,2,...,n_{T}}\bar{Z}_{t_{k-1}}^{\frac{16q}{(\rho-1)\varepsilon}}\Big]\Big)^{\frac{1}{4}}\nonumber\\
\!\!&&\!\!+\Big(\mathbb{E}\Big[\sup_{k=1,2,...,n_{T}}\frac{1}{(\Delta t_{k-1})^{8q}}\exp\Big(-\frac{1}{4(\rho-1)^{2}\alpha_{3}^{2}(\Delta t_{k-1})^{\frac{1}{3}+\rho\varepsilon}}\Big)\Big]\Big)^{\frac{1}{4}}\nonumber\\
\!\!&&\!\!+\Big(\mathbb{E}\Big[\sup_{k=1,2,...,n_{T}}\frac{1}{(\Delta t_{k-1})^{8q}}\exp\Big(-\frac{\alpha_{2}}{2(\rho-1)\alpha_{3}^{2}(\Delta t_{k-1})^{\rho\varepsilon+\frac{m-1}{3}-\frac{\rho(m+1)\varepsilon}{2}}}\Big)\Big]\Big)^{\frac{1}{4}}\bigg]\nonumber\\
\!\!&<&\!\!\infty,
\end{eqnarray}
where we have used in the last inequality the fact that for $p\geq 0$, the function $x\mapsto x^{p}e^{-x}$ is bounded by a constant $C_{p}$ for $x\in[0,\infty)$. The proof is completed.
\end{proof}

\section*{Disclosure statement}

The authors declare that they have no conflict of interest.

\section*{Data availability statement}
The datasets generated during and/or analyzed during the current study are available from the corresponding author on reasonable request.

\bibliographystyle{abbrv}

\end{document}